\documentclass[10pt,a4paper]{article}

\addtolength{\textheight}{60pt} \addtolength{\topmargin}{-30pt}
\textwidth 164mm \oddsidemargin -2.25mm
\pdfoutput=1


\usepackage{amssymb}
\usepackage{amsmath}
\usepackage{amsthm}
\usepackage{xcolor}
\usepackage{graphicx}
\usepackage{epstopdf}
\usepackage{enumitem}
\usepackage{tabularx}
\usepackage[ruled,algosection]{algorithm2e}
\SetKwRepeat{Repeat}{Do}{While}
\usepackage{comment}
\usepackage{psfrag}
\usepackage{hyperref}
\hypersetup{
  breaklinks,bookmarks=false,colorlinks = true,
             linkcolor = blue,
             urlcolor  = blue,
             citecolor = blue,
             anchorcolor = blue
}
\setlength{\marginparwidth}{2.15cm}
\usepackage[textsize=tiny,color=yellow!60!green]{todonotes}
\usepackage{mathbbol}
\usepackage{amssymb}             
\usepackage{enumitem}
\usepackage{subcaption}
\DeclareSymbolFontAlphabet{\amsmathbb}{AMSb}%
\newcommand{\textn}[1]{\textnormal{#1}}
\newcommand{\bP}{\amsmathbb{P}}

\DeclareMathAlphabet{\mathbfit}{OML}{cmm}{b}{it}

\newcommand{\RR}{{\amsmathbb R}}

\newcommand{\vecv}{{\mathbf{v}}}
\newcommand{\vecu}{{\mathbf{u}}}
\newcommand{\veca}{{\mathbf{a}}}
\newcommand{\vecw}{{\mathbf{w}}}
\newcommand{\vecz}{{\mathbf{z}}}
\newcommand{\vK}{\mathbf K}
\newcommand{\II}{{\mathbf I}}
\newcommand{\dt}{\,\textn{d}t}
\newcommand{\dx}{\,\textn{d}\vecx}

\newcommand{\vectau}{{\boldsymbol {\tau}}}
\newcommand{\vecsigma}{{\boldsymbol {\sigma}}}
\newcommand{\vecgamma}{{\boldsymbol {\gamma}}}
\newcommand{\veczeta}{{\boldsymbol {\zeta}}}
\newcommand{\vectheta}{{\boldsymbol {\theta}}}

\newcommand{\vf}{\mathbf f}

\newcommand{\epsb}{\boldsymbol{\epsilon}}
\newcommand{\vecn}{{\mathbf{n}}}
\newcommand{\vecx}{{\mathbf{x}}}

\newcommand{\divecv}{\nabla{\cdot}}

\newcommand{\divecvv}{{\textn{div}}}

\newcommand{\omegatimeszeroT}{\Omega\times(0,T)}

\newcommand{\Hdiv}{\mathbf{H}(\divecvv,\Omega)}

\newcommand{\Htensordiv}{\amsmathbb{H}(\divecvv,\Omega)}

\newcommand{\htau}{h\tau}

\newcommand{\Iavo}{\mathcal{I}_{\textn{av}}}

\newcommand{\eq}{:=}



 \newcommand{\sigmabo}{\boldsymbol{\sigma}}
 
  \newcommand{\betabo}{\boldsymbol{\beta}}

\newcommand{\Tau}{\mathcal{T}}
\newcommand{\Vauh}{\mathcal{V}_{h}}

\newcommand{\RTN}{{\mathbf{RTN}}}

\newtheorem{thm}{Theorem}
\numberwithin{thm}{section}

\newtheorem{lem}[thm]{Lemma}

\newtheorem{df}[thm]{Definition}

\newtheorem{rem}[thm]{Remark}

\newtheorem{algo}[thm]{Algorithm}
\newtheorem{deff}[thm]{Definition}

\newcommand{\bse}{\begin{subequations}}
\newcommand{\ese}{\end{subequations}}



\makeatletter
\newcommand{\eqnum}{\refstepcounter{equation}\textup{\tagform@{\theequation}}}
\makeatother
\graphicspath{{Figures/}}

\title{Adaptive asynchronous time-stepping,  stopping criteria, and a posteriori error estimates for fixed-stress iterative  schemes   for  coupled poromechanics problems\thanks{This work forms part of Norwegian Research Council project 250223}}

\author{Elyes Ahmed\footnotemark[2]
\and Jan Martin Nordbotten\footnotemark[2]\ \footnotemark[3]
\and Florin Adrian Radu\footnotemark[2]
}
\date{\today}

\begin{document}

\maketitle

\renewcommand{\thefootnote}{\fnsymbol{footnote}}

\footnotetext[2]{Department of Mathematics, University of Bergen, P. O. Box 7800, N-5020 Bergen, Norway.
\href{mailto:elyes.ahmed@uib.no}{elyes.ahmed@uib.no},
\href{mailto:jan.nordbotten@uib.no}{jan.nordbotten@uib.no},
\href{mailto:florin.florin.radu@uib.no}{florin.radu@uib.no},
}
\footnotetext[3]{Department of Civil and Environmental Engineering, Princeton University, Princeton, N. J., USA.}
\renewcommand{\thefootnote}{\arabic{footnote}}

\numberwithin{equation}{section}

\begin{abstract}
In this paper we develop   adaptive iterative coupling schemes for the Biot system modeling coupled poromechanics problems. We  particularly consider the   space-time formulation of the fixed-stress iterative scheme,
in which  we  first solve  the problem of flow   over the whole space-time interval,   then exploiting the space-time information for  solving the   mechanics. Two common discretizations of this algorithm are  then introduced  based on two coupled mixed finite element methods in-space and  the    backward Euler scheme in-time. Therefrom, adaptive fixed-stress algorithms  are  build on conforming reconstructions of the pressure and displacement together with equilibrated flux and stresses reconstructions. These ingredients are used to  derive a posteriori error  estimates for the fixed-stress algorithms, distinguishing the different error components, namely the spatial discretization, the temporal discretization, and the fixed-stress iteration components. Precisely, at the  iteration $k\geq 1$ of the adaptive  algorithm, we prove that our estimate gives a guaranteed  and fully 
computable upper bound    on the energy-type error measuring the difference between the exact and  approximate   pressure and displacement. These error components are    efficiently used to design adaptive  asynchronous time-stepping and adaptive  stopping criteria for the  fixed-stress algorithms. 
Numerical experiments illustrate  the efficiency of our estimates and the performance of the adaptive  iterative coupling 
algorithms. 
\end{abstract}

\vspace{3mm}

\noindent{\bf Key words:} Biot's poro-elasticity problem; mixed finite element method;  
fixed-stress iterative coupling; space-time scheme; multi-rate scheme; Arnold--Falk--Winther
elements; a posteriori error analysis; energy-type estimates;  adaptive asynchronous time-stepping; adaptive stopping criteria.


\pagestyle{myheadings} \thispagestyle{plain} \markboth{E. Ahmed and F. A. Radu}{}

%
\section{Introduction} \label{sec:Intro}
 Let $\Omega$ be  an open, bounded   and connected  domain in $\RR^{d}$, $d=2,3$,
 which is assumed to be polygonal with Lipschitz-continuous boundary $\partial\Omega$,  and let 
 $T$ be the final simulation time. We consider in this paper 
 the problem of  flow in deformable porous media modelled by  the \textit{quasi-static Biot system} \cite{SHOWALTER2000310}:
find a displacement $\vecu$ and 
a pressure $p$ satisfying:
\bse\label{Main_problem_model}\begin{alignat}{3}
  -\divecv(\vectheta(\vecu)-\alpha p\II) &=\vf,\quad&&\mbox{in }\omegatimeszeroT,
    \label{Main_problem_model_mech}\\
  \partial_{t}\varphi(p,\vecu)-\divecv(\vK\nabla p)&=g,\quad&&\mbox{in }\omegatimeszeroT, \label{Main_problem_model_cons}\\   
\vecu(\cdot,0)=\vecu_{0},\quad p(\cdot,0)&=p_{0},\quad&&\textnormal{in }\Omega, \label{Main_problem_model_IC}\\
   \vecu=0,\quad p&=0,\quad&&\mbox{on }\partial\Omega\times(0,T), \label{Main_problem_model_BC}
\end{alignat}\ese
where $\vf$ is the body force and $g$ is the volumetric source term. 
The function $\vectheta$ denotes the effective stress tensor, i.e.,  $\vectheta(\vecu)\eq2\mu\epsb(\vecu)+\lambda\textn{tr}(\epsb(\vecu))\II $, 
with  $\epsb(\vecu)$ is the linearized strain tensor  given by $\epsb(\vecu)\eq(\nabla \vecu+\nabla^{\textn{T}}\vecu)/2$ and
the operator \textquotedblleft $\textn{tr}$\textquotedblright\,  denotes  the trace of matrices.  The coefficients  $\mu$ and $\lambda$ are
the Lam\'{e} parameters,   supposed strictly positive constants. The function $\varphi$ denotes 
the fluid content, i.e.,  $\varphi(p,\vecu):=c_{0}p+\alpha\divecv\vecu$,  where 
$c_{0}>0$   is the constrained-specific storage coefficient, and $\alpha>0$  is 
the Biot--Willis constant. The parameter $\vK$ is the permeability tensor divided by fluid viscosity; it is  
 a symmetric, bounded, and uniformly positive definite tensor whose
terms are for simplicity supposed piecewise constant on the mesh $\Tau_{h}$ 
of $\Omega$ defined below and constant in time. Finally,     $p_{0}$ is the initial pressure and 
$\vecu_{0}$ is the initial displacement. 

Throughout, we will use the convention that if
$V$ is a space of functions, then we designate by  $\mathbf{V}$  a space of vector functions having each component
in $V$, and we designate by $\amsmathbb{V}$  the  space of tensor functions having each component
in $V$. Let $D\subset \RR^{d}$; the space $L^{2}(D)$ is endowed with its natural inner product written $\left(\cdot,\cdot\right)_{D}$ 
with associated norm denoted by $||\cdot||_{D}$.  When the domain $D$
coincides with $\Omega$, the subscript $\Omega$ is dropped. Let $|D|$ be the
Lebesgue measure of $D$. We designate by $H^{1}(\Omega)$ the usual Sobolev space and by $H^{1}_{0}(\Omega)$ for its zero-trace subspace. Its norm and semi-norm are 
written $||\cdot||_{H_{0}^{1}(\Omega)}$ and $|\cdot|_{H_{0}^{1}(\Omega)}$ respectively.   In
particular, $H^{-1}(\Omega)$ is the dual of $H^{1}_{0}(\Omega)$. Further,  let $\Hdiv$ be the space of
vector-valued functions  from $\mathbf{L}^{2}(\Omega)$ that admit a
weak divergence in $L^{2}(\Omega)$. Its natural norm is 
\[
 ||\vecv||_{\divecvv,\Omega}\eq\left(||\vecv||^{2}+||\divecv \vecv||^{2}\right)^{\frac{1}{2}}.
 \]
We also  define $\Htensordiv$ to be the space of
tensor-valued functions  from $\amsmathbb{L}^{2}(\Omega)$ that admit a
 weak divergence  (by rows) in $\mathbf{L}^{2}(\Omega)$. 
Then,  we set 
\begin{align*}
  Q& \eq  L^{2}(\Omega), \qquad   \mathbf{W}\eq \Hdiv,  \qquad \amsmathbb{W}  \eq \Htensordiv,\qquad \amsmathbb{Q}_{\textn{sk}}  \eq [L^{2}(\Omega)]_{\textn{sk}}^{d\times d}.
\end{align*}
To give the mixed formulation 
of~\eqref{Main_problem_model}, we introduce 
the total stress tensor; 
$\vecsigma(p,\vecu)\eq\vectheta(\vecu)-\alpha p \II$, 
and the Darcy velocity;  $\vecw\eq-\vK\nabla p$. Let $c_{r}\eq\dfrac{d\alpha^2}{2\mu+d\lambda}$,  
then introduce the fourth-order compliance tensor $ \mathcal{A}$ given by
\begin{alignat}{2}\label{OperatorA_defini}
 \mathcal{A}\vectau\eq\dfrac{1}{2\mu}\left(\vectau-\dfrac{\lambda}{2\mu+d\lambda}\textn{tr}(\vectau)\II\right),
\end{alignat}
which is known to be bounded and symmetric definite uniformly with respect to $\vecx\in\Omega$,
we can rewrite
equations~\eqref{Main_problem_model} in mixed weak sense:
\begin{df}[The five-field formulation~\cite{ahmed:hal-01687026}]
Assume $\vf\in L^{2}(0,T;\left[L^{2}(\Omega)\right]^{d})$, $g\in L^{2}(0,T;L^{2}(\Omega))$, $p_{0}\in  H^{1}_{0}(\Omega)$  and 
$\vecu_{0}\in  \mathbf{H}^{1}_{0}(\Omega)$. The fully mixed  formulation 
of~\eqref{Main_problem_model} reads: 
find $(\vecsigma,\vecu,\vecw,p,\veczeta)\in H^{1}(0,T;\amsmathbb{W}) \times L^{2}(0,T;\mathbf{Q})\times L^{2}(0,T;\mathbf{W})\times H^{1}(0,T;Q)\times L^{2}(0,T;\amsmathbb{Q}_{\textn{sk}})$ such that
\bse\label{Weak_problem_mixed_model}\begin{alignat}{4}
\int_{0}^{T}\{(c_{0}+c_{r})(\partial_{t}p,q)
+\dfrac{c_{r}}{d\alpha}(\partial_{t}\vecsigma,q\II)+(\divecv\vecw,q)\}\dt&=\int_{0}^{T}(g,q)\dt,&&\quad\forall q\in Q,\label{Weak_problem2_model_cons}\\
\int_{0}^{T}\{(\vK^{-1}\vecw,\vecv)-(p,\divecv\vecv)\}\dt&=0,&&\quad\forall \vecv\in \mathbf{W},\label{Weak_problem2_model_darcy}\\
-\int_{0}^{T}\{(\mathcal{A}\vecsigma,\vectau)+(\vecu,\divecv\vectau)+(\veczeta,\vectau)+\dfrac{c_{r}}{d\alpha}(p\II ,\vectau)\}\dt&=0,&&\quad\forall \vectau\in \amsmathbb{W},\label{Weak_problem2_model_mech}\\
\int_{0}^{T}\{(\divecv\vecsigma,\vecz)+(\vecsigma,\vecgamma)\}\dt&=-\int_{0}^{T}(\vf,\vecz)\dt,&&\quad\forall (\vecz,\vecgamma)\in \mathbf{Q}\times \amsmathbb{Q}_{\textn{sk}},\label{Weak_problem2_model_stress}
\end{alignat}\ese
together with the initial condition~\eqref{Main_problem_model_BC}.
\end{df}
The well-posedness  and  regularity analysis of 
the Biot equations~\eqref{Main_problem_model} have been addressed in~\cite{SHOWALTER2000310}. 
That of the  existence and uniqueness of a weak solution of problem~\eqref{Weak_problem_mixed_model} 
 have been addressed in~\cite{AHMED2018} (see~\cite{ahmed:hal-01687026} for more details). Therein,  two mixed formulations are  discretized with the backward Euler scheme in-time 
 and in-space with  mixed finite elements methods, then
 a posteriori error estimates for their solutions are derived. The main  issue arising when apply \textit{MFE methods}  for this problem is that it results a very \textit{large system} to be solved at each time step (see~\cite{AHMED2018,BRUNThemo2018,yi2014convergence}). This  
issue together with the fact  that flow and mechanics effects  act at \textit{different time scales},
encourage the development of efficient techniques for  the resolution of these coupled 
systems.   \textit{Splitting-based iterative methods}~\cite{chin2002iterative,kim2009stability,mainguy2002coupling,Mikelic2014}  provide one such approach. 
They adopt the \textquotedblleft divide and conquer\textquotedblright\, strategy and  {split} the two
systems. Then, a \textit{sequential  approach} is used,  in  that either the problem of flow or the mechanics is solved 
first followed by solving the other system using the already calculated information,  
leading  to recover the original solution~\cite{KIM20112094,Mikelic2013,NAG:NAG2538}. 
{The decoupling procedure enjoys  the use of a local static condensation for the flow and mechanics. The MFE system resulting from each subsystem can be reduced to a symmetric and positive definite one;      pressure is the sole unknown for the flow problem, and  the  displacement and rotation (may also be only the displacement depending on the used quadrature rule) for the mechanics (see~\cite{ambartsumyan2018multipoint1,ambartsumyan2018multipoint2} for more details)}.  
  Particularly, in the last years, a lot of research has been done on the fixed-stress method~\cite{almani2016convergence,borregales2018parallel,DANA20181,GASPAR2017526}.
  Applied to problem~\eqref{Weak_problem_mixed_model}, it can be rewritten, as, see~\cite{BAUSE2017745}:
\begin{deff}[The space-time fixed-stress algorithm]~\label{original_fs}
{
\setlist[enumerate]{topsep=0pt,itemsep=-1ex,partopsep=1ex,parsep=1ex,leftmargin=1.5\parindent,font=\upshape}
\begin{enumerate}
\item Chose an initial approximation $\vecsigma^{0}\in H^{1}(0,T;\amsmathbb{W})$ of $\vecsigma$ 
and a tolerance $\epsilon>0$. Set $k\eq-1$.
\item \textn{\textbf{Do}}
\begin{enumerate}
 \item Increase $k\eq k+1$.
\item Compute $(\vecw^{k},p^{k})\in  
L^{2}(0,T;\mathbf{W})\times H^{1}(0,T;Q)$ such that
\bse\label{fstress_iterative_flow_optimized}\begin{alignat}{4}
\int_{0}^{T}\{(c_{0}+c_{r})(\partial_{t}p^{k},q)+(\divecv\vecw^{k},q)\}\dt&=
\int_{0}^{T}\{(g,q)-\dfrac{c_{r}}{d\alpha}(\partial_{t}\vecsigma^{k-1},q\II)\}\dt,&&\quad\forall q\in Q,\label{fstress_iterative_flow_cons_optimized}\\
\int_{0}^{T}\{(\vK^{-1}\vecw^{k},\vecv)-(p^{k},\divecv\vecv)\}\dt&=0,&&\quad\forall \vecv\in \mathbf{W}.\label{fstress_iterative_flow_darcy_optimized}
\end{alignat}\ese
\item  Compute $(\vecsigma^{k},\vecu^{k},\veczeta^{k})\in H^{1}(0,T;\amsmathbb{W})\times L^{2}(0,T;\mathbf{Q})\times L^{2}(0,T;\amsmathbb{Q}_{\textn{sk}})$ such that 
\bse~\label{fstress_iterative_mechanics_optimized}\begin{alignat}{4}
-\int_{0}^{T}\{(\mathcal{A}\vecsigma^{k},\vectau)+(\vecu^{k},\divecv\vectau)+(\veczeta^{k},\vectau)\}\dt&=\dfrac{c_{r}}{d\alpha}\int_{0}^{T}(p^{k}\II ,\vectau)\dt,&&\quad\forall \vectau\in \amsmathbb{W},\label{fstress_iterative_mech_stress_optimized}\\
\int_{0}^{T}\{(\divecv\vecsigma^{k},\vecz)+(\vecsigma^{k},\vecgamma)\}\dt&=-\int_{0}^{T}(\vf,\vecz)\dt,&&\quad\forall (\vecz,\vecgamma)\in \mathbf{Q}\times \amsmathbb{Q}_{\textn{sk}}.\label{fstress_iterative_cons_optimized}
\end{alignat}\ese
\end{enumerate}
\textn{\textbf{While}} $\dfrac{\|(\vecsigma^{k},p^{k}) - (\vecsigma^{k-1},p^{k-1})\|_{L^{2}(\Omega\times[0,T])}}{\| (\vecsigma^{k-1},p^{k-1})\|_{L^{2}(\Omega\times[0,T])}}\geq \epsilon$.
\end{enumerate}}
\end{deff}
{The above method is the space-time fixed stress 
introduced first~\cite{borregales2018parallel},  in which we  solve first  the problem of flow over the whole space-time interval,   then exchange  
 the space-time information to solve the mechanics. This method is of interest for (i) the flexibility to use  different time steps for flow and mechanics (ii)  the advantage to derive  error and a posteriori error analysis, permitting  the use of adaptive  asynchronous   time-stepping (iii) the possibility to parallelize step~2.(c) of the algorithm.}  The classical fixed-stress algorithm in-space  with four-field mixed formulation  was analyzed in~\cite{NAG:NAG2538}, where
 a priori convergence results are given. Other related works on this method can be found in~\cite{almani2016convergence,both2017robust,Castelletto2015,GASPAR2017526,Mikelic2013} and the references therein.

 In this paper, we are interested  in designing adaptive  versions of two common discretizations of the algorithm addressed in Definition~\ref{original_fs}, 
 see Algorithm~\ref{space_time_discrete},~\ref{multi-rate_discrete} (standard), and 
 Algorithm~\ref{space_time_discrete_adaptive} (adaptive) below. 
 To this aim,  two iterative solution strategies for the Biot's consolidation
problem are presented; they are based on the above  fixed stress iterative 
scheme, in which at each iteration, the space-time subsystems are solved sequentially using \textit{MFE methods} in-space and
with a \textit{backward Euler} scheme in-time (cf.~\cite{NAG:NAG2538}). We constitute their adaptive counterpart 
upon the distinction of the different error components arising in
the standard fixed stress algorithm, namely the spatial discretization, 
the temporal discretization, and the fixed stress iteration components.  To arrive to this aim, 
we take ideas from~\cite{arioli2005stopping,adaptnewernvohralik,jiranek2010posteriori,kumar2018guaranteed}, for 
general a posteriori error  techniques taking into account \textit{inexact  iterative  solvers}, but 
  most closely  form~\cite{ahmed:hal-01540956}, 
  where a domain decomposition problem is solved via space-time  iterative methods. Particularly,   
  we  will rely on~\cite[Theorem 6.2]{AHMED2018} where an energy-type-norm differences 
  between the exact and the  approximate pressure and displacement is shown to be bounded by the dual norm of the 
  residuals. {The developed adaptive fixed-stress  algorithm is applicable on  
\textit{any locally conservative discretization for the two coupled subsystems}, such as cell-centered finite volume scheme, multipoint mixed finite element, mimetic finite difference and hybrid high-order discontinuous Galerkin~\cite{MR3504993,MR3719125,MR3478962,MR3425298}. It can also be  
extended to conforming methods using  equilibrated flux and stress reconstructions~( cf. \cite{RIEDLBECK20171593}).}
  
  In contrast to what is developed 
  in~\cite{AHMED2018}, three additional features  to be treated in this work; first is that  
  the current setting targets inexact iterative coupling schemes for the Biot system and not 
  monolithic solvers; second  that the MFE methods here provides at each iteration of the 
  coupling algorithms  approximate flux and stress  not balanced with the source terms; 
  third is that the actual setting provides adaptive  asynchronous 
  time-stepping for the flow and mechanics problems. Here,   we first show that the
presented a posteriori error estimate  delivers sharp bound (as reflected by moderate effectivity indices) for the actual energy-type error, and this at each iteration of  the coupling algorithm.   We also show how  
  the overall error propagates between the flow and mechanics subproblems   during the iterative process,  and then to 
  address the question of when to stop the  iterations. 
  This question was asked in~\cite{almani2016convergence,BAUSE2017745,borregales2018parallel}, 
  where the   practitioners iterate between  the two coupled subsystems until   some fixed tolerance has been reached.
 The used  stopping criterion is in fact mostly related to the algebraic error, i.e., the closeness of 
 $(\vecsigma^{k},p^{k})$  to the convergent solution  $(\vecsigma^{\infty},p^{\infty})$ is only taken into account 
without reference to the underlying continuous Biot's  problem~\eqref{Main_problem_model}. Here, by \textit{distinguishing the space, 
  time and coupling  errors}, the  \textit{adaptive  stopping criterion} for the iterative scheme 
  that we propose instead is when  the coupling error  does not contribute 
  significantly to  the overall error. {In \textit{grosso modo}, the standard approach stops the iterations at some arbitrary tolerance, which hopefully is sufficiently accurate (but perhaps not!), while the approach based on error estimates stops the iterations at the correct time.} Adaptive stopping criteria via a posteriori error estimates in the context of 
  other model problems are treated in~\cite{ahmed:hal-01540956,hassan2017posteriori,DiPieFlaVohSol,jiranek2010posteriori}, see also the references 
  therein. 
Furthermore,  the resulting algorithms involve \textit{tuning 
parameters} that can be optimized (see~\cite{storvik2018optimization});  the results show how a posteriori error estimates  can help optimize these   parameters. To the best of our knowledge, this combination of features in the adaptive fixed-stress algorithms is unique.

The paper is organized as follows. Section~\ref{sec:notation} fix 
the notation for temporal and spatial meshes  and defines some relevant  functional spaces. In Section~\ref{sec:standard_discrete_algorithms}, we present two common discretizations of 
Algorithm~\eqref{fstress_iterative_flow_optimized}-\eqref{fstress_iterative_mechanics_optimized},   by combining in-space two mixed finite elements for the flow and mechanical problems, and  a backward Euler scheme in-time. As 
a posteriori error estimate has no meaning for piecewise constant functions,  
the MFE approximate pressure and displacement 
will  be locally postprocessed in order to obtain  improved approximations.
In Section~\ref{sec:adaptive_discrete_algorithms}, we first introduce two major 
improvements to these two standards algorithms, by designing for each one, an
adaptive stopping criterion, and a  balancing criterion  equilibrating 
the space and time error components using an 
adaptive  asynchronous time-stepping. These enhancements 
are used to design   adaptive versions of the fixed-stress schemes based on a 
posteriori error estimates. We then construct the
needed ingredients for the a posteriori error estimates:  Section~\ref{sec:reconstructions}, 
defines the   $H^{1}(\Omega)$- and $\Hdiv$ conforming reconstructions. In Section~\ref{sec:Aposteriori}, these 
 ingredients  are used
to   bound an energy-type error in the pressure and displacement  at each iteration of 
 the coupling algorithm by a guaranteed and fully computable error estimate.  This a posteriori estimate is then elaborated by   distinguishing  the fixed-stress iterative coupling error  from 
 the  space and time error components.  We also  separate 
the pressure error components from those of displacement errors. We show numerical results in Section~\ref{sec:NumericalResults}. Finally, a conclusion that highlights our developments is given in Section~\ref{sec:conclusion}.
\section{Notation}\label{sec:notation}
We introduce here the partition of $\Omega$, time discretization, notation, and function spaces; see~\cite{ahmed:hal-01687026} for a similar notation.
\subsection{Partitions of the time interval $(0,T)$}
The   space-time iterative method we use  supports asynchronous  time grids for flow and mechanics. To this aim, the
 subscripts ``$\textn{f}$'', and ``$\textn{m}$'' will be used throughout, to stand for flow and mechanics, respectively.
For integer values $N_{\textn{f}}> 0$, let $\left(\tau^{n}_{\textn{f}}\right)_{1\leq n\leq N_{\textn{f}}}$ denote a sequence of positive real 
numbers corresponding to the discrete flow time steps  such that $T=\sum_{n=1}^{N_{\textn{f}}}\tau^{n}_{\textn{f}}$. Let $t^{0}_{\textn{f}}\eq0$, and $t_{\textn{f}}^{n}\eq\sum_{j=1}^{n}\tau^{j}_{\textn{f}}, \ 1 \leq n\leq N_{\textn{f}}$ be the discrete times for the flow problem. Let
$I^n_{\textn{f}}\eq(t^{n-1}_{\textn{f}},t^n_{\textn{f}}], \ 1 \le n \le N_{\textn{f}}$.   
 For the time stepping for the problem of mechanics, we will restrict ourselves to the case in which  a fixed number  of 
 local flow time steps  corresponds to one coarse mechanics time step. We    suppose that  $N_{\textn{f}}=\delta_{\textn{fm}} N_{\textn{m}} $,  
 with $\delta_{\textn{fm}}$ and $N_{\textn{m}}$ are given positive integer values, 
where  $\delta_{\textn{fm}}$ is   the fixed number  of local flow time steps   within one coarse mechanics time step. 
We then  let $\left(\tau^{\ell}_{\textn{m}}\right)_{1\leq \ell\leq N_{\textn{m}}}$ 
such that  $T=\sum_{\ell=1}^{ N_{\textn{m}}}\tau^{\ell}_{\textn{m}}$; 
for $1\leq \ell\leq N_{\textn{m}}$. We have then   
$\tau^{\ell}_{\textn{m}}=\sum_{n=(\ell-1)\delta_{\textn{fm}}+1}^{\ell\delta_{\textn{fm}}}\tau^{n}_{\textn{f}}$, and we let 
$t^{0}_{\textn{m}}\eq0$, and $t_{\textn{m}}^{\ell}\eq\sum_{j=1}^{\ell}\tau^{\ell}_{\textn{m}}, \ 1 \leq \ell\leq  N_{\textn{m}}$ be 
the discrete times for the problem of mechanics; see~\cite{almani2016convergence} for a similar notation. We use $I^{\ell}_{\textn{m}}\eq(t^{\ell-1}_{\textn{m}},t^{\ell}_{\textn{m}}], \ 1 \le \ell \le  N_{\textn{m}}$.
For any sufficiently smooth function $v_{\htau}$, 
 we use the notation  $v_{h}^{n} := v_{\htau}(\cdot,t^{n}_{\textn{f}})$, for all $ 0\leq n \leq N_{\textn{f}}$.


%
\subsection{Partition of the domain $\Omega$}
 Let  $\mathcal{T}_h$ be a simplicial  mesh of $\Omega$,  
matching in the sense that for two distinct elements of $\mathcal{T}_h$ their
intersection is either an empty set or their common vertex or edge.  Let $h_{K}$ denote the diameter of $K\in \Tau_{h}$ and $h$
be the largest diameter of all triangle; $h\eq \max_{K\in \Tau_{h}}h_K$. The set of 
vertices of the mesh $\Tau_{h}$ is denoted by $\Vauh$, $\Vauh^{\textn{int}}$
for the set of interior vertices, and $\Vauh^{\textn{ext}}$ for the set of
boundary vertices.  For each $\veca\in \Vauh$, let $\Tau_{h}^{\veca}$
denote the patch of the vertex $\veca$, i.e., all the elements
 $K\in \Tau_{h}$ which share the vertex $\veca$. We denote by $\omega_{\veca}$
 the corresponding open subset of $\Omega$.

\subsection{Discrete function spaces}

To approximate the flow subproblem~\eqref{fstress_iterative_flow_optimized}, we let $Q_{h}\times \mathbf{W}_{h}\subset Q\times \mathbf{W}$ be 
the Raviart--Thomas--N\'{e}d\'{e}lec mixed finite element spaces of order zero on the mesh $\Tau_{h}$~(cf.~\cite{arnold1985mixed}):
\bse\label{flowspaces}\begin{alignat*}{2}
&Q_{h}:=\displaystyle\{q_{h}\in L^{2}(\Omega);\,\forall K\in \Tau_h,\,q_{h}|_{K}\in \amsmathbb{P}_{0}(K) \},\\
&\mathbf{W}_{h}:=\displaystyle\{ \vecv_{h}\in \Hdiv;\,\forall K\in \Tau_h,\,\vecv_{h}|_{K}\in \RTN_0(K)\},
\end{alignat*}\ese
where $\RTN_0(K)$ denotes the
lowest-order Raviart--Thomas--N\'{e}d\'{e}lec finite-dimensional subspace associated with the element $K\in\Tau_{h}$. 
%

To approximate  the  mechanics subproblem~\eqref{fstress_iterative_mechanics_optimized},  
we let $\mathbf{Q}_{h}\times \amsmathbb{W}_{h}\times\amsmathbb{Q}_{\textn{sk},h}\subset \mathbf{Q}\times \amsmathbb{W}\times\amsmathbb{Q}_{\textn{sk}}$ be the Arnold--Falk--Winther 
mixed finite elements with weakly symmetric stress for the lowest-order stresses on the mesh $\Tau_{h}$~(cf.~\cite{arnold2007mixed}):
\bse\label{mechspaces}\begin{alignat*}{2}
&\mathbf{Q}_{h}:=\displaystyle\{\vecz_{h}\in \mathbf{L}^{2}(\Omega);\,\forall K\in \Tau_h,\,\vecz_{h}|_{K}\in \left[\amsmathbb{P}_{0}(K)\right]^{d} \},\\
&\amsmathbb{W}_{h}:=\displaystyle\{ \vectau_{h}\in \Htensordiv;\,\forall K\in \Tau_h,\,\vectau_{h}|_{K}\in \left[\amsmathbb{P}_{1}(K)\right]^{d\times d}\},\\
&\amsmathbb{Q}_{\textn{sk},h}:=\displaystyle\{\gamma_{h}\in [L^{2}(\Omega)]_{\textn{sk}}^{d\times d};\,\forall K\in \Tau_h,\,\gamma_{h}|_{K}\in [\amsmathbb{P}_{0}(K)]_{\textn{sk}}^{d\times d} \},
\end{alignat*}\ese
where   $[\amsmathbb{P}_{0}(K)]_{\textn{sk}}^{d\times d}$ denotes  the  subspace  of $[\amsmathbb{P}_{0}(K)]^{d\times d}$ 
composed  of   skew symmetric--valued tensors. 
 
Let $E$ be a space of functions defined on $\Omega$. We denote
$P^{1}_{\tau}(E)$
the vector space of functions continuous  in
time and with values in $E$. We also denote
by $P^{0}_{\tau}(E)$ the space of functions piecewise constant in
time and with values in $E$. We have then if $v_{\htau}\in P^{1}_{\tau}(E)$, then $\partial_{t}v_{\htau}\in P^{0}_{\tau}(E)$ is such that  for all $1\leq n\leq N_{\textn{f}}$,
\begin{alignat}{2}\label{dtnfunc}
&\partial_{t} v^{n}_{h} := \partial_{t} v_{\htau}|_{I_{\textn{f}}^{n}} = \dfrac{v_{h}^{n}-v_{h}^{n-1}}{\tau^{n}_{\textn{f}}}.
\end{alignat}

 \section{Fully discrete space-time fixed-stress schemes based on MFE in-space
and the backward Euler scheme in-time}\label{sec:standard_discrete_algorithms}
In this section, we provide two discretization of Algorithm~\eqref{fstress_iterative_flow_optimized}-\eqref{fstress_iterative_mechanics_optimized} using  
the backward Euler scheme in-time, and in-space,  using 
two mixed finite elements methods  for the linear elasticity and  flow problems.  A post-processing routine is 
then given to preview the numerical pressure and displacement solutions.
\subsection{Two  standard discrete fixed-stress schemes.}\label{subsect:discrete_MFE_solution}
In the first algorithm, we   consider the case  of equal time grids for the flow and mechanics problems, 
i.e., $N_{\textn{f}} =N_{\textn{m}}$. The fully discrete  form of Algorithm~\ref{original_fs} reads  then as follows:
\begin{algo}[The global-in-time fixed-stress]~\label{space_time_discrete}
{
\setlist[enumerate]{topsep=0pt,itemsep=-1ex,partopsep=1ex,parsep=1ex,leftmargin=1.5\parindent,font=\upshape}
\begin{enumerate}
\item Chose an initial approximation $\vecsigma^{0}_{\htau}\in P^{0}_{\tau}(\mathbf{W}_{h})$ of 
$\vecsigma$, a real constant $\beta>0$,  and a tolerance $\epsilon>0$. 
Set $k\eq-1$.
\item \textn{\textbf{Do}}
\begin{enumerate}
\item  Increase $k\eq k+1$ and set $n\eq0$.
\item \textn{\textbf{Do}}
\begin{enumerate}
\item Increase $n\eq n+1$.
\item Approximate   $(\vecw_{h}^{k,n},p^{k,n}_{h})\in \mathbf{W}_{h}\times Q_{h}$,  the solution to 
\bse\label{fstress_iterative_flow_std}\begin{alignat}{4}
&(\vK^{-1}\vecw^{k,n}_{h},\vecv)-(p^{k,n}_{h},\divecv\vecv)=0,&&\quad\forall \vecv\in \mathbf{W}_{h}.\label{fstress_iterative_flow_darcy_std}\\
\nonumber&(c_{0}+c_{r}+\beta)(\partial_{t} p_{h}^{k,n},q)+(\divecv\vecw_{h}^{k,n},q)= (g^{n},q)&&\\
&\qquad\qquad\qquad +\beta(\partial_{t} p_{h}^{k-1,n},q)
-\dfrac{c_{r}}{d\alpha}(\partial_{t} \vecsigma_{h}^{k-1,n},q\II ),&&\quad\forall q\in Q_{h},\label{fstress_iterative_flow_cons_std}
\end{alignat}\ese
 \end{enumerate}
 \textn{\textbf{While}} $n\leq N_{\textn{f}}$.
\item Reset $n\eq0$.
\item \textn{\textbf{Do}}
\begin{enumerate}
\item Increase $n\eq n+1$.
\item Approximate $(\vecsigma_{h}^{k,n},\vecu_{h}^{k,n},\veczeta_{h}^{k,n})\in \amsmathbb{W}_{h}\times \mathbf{Q}_{h}\times 
\amsmathbb{Q}_{\textn{sk},h}$, solution to
\bse\label{fstress_iterative_mech_std}\begin{alignat}{4}
&(\mathcal{A}\vecsigma_{h}^{k,n},\vectau)+(\vecu_{h}^{k,n},\divecv\vectau)+(\veczeta_{h}^{k,n},\vectau)=-\dfrac{c_{r}}{d\alpha}( p_{h}^{k,n}\II,\vectau),&&\quad\forall \vectau\in \amsmathbb{W}_{h},\label{fstress_iterative_mech_stress_std}\\
&(\divecv\vecsigma_{h}^{k,n},\vecz)+(\vecsigma_{h}^{k,n},\vecgamma)=-(\vf^{n},\vecz),&&\quad\forall (\vecz,\vecgamma)\in \mathbf{Q}_{h}\times \amsmathbb{Q}_{\textn{sk},h},\label{fstress_iterative_cons_std}
\end{alignat}\ese
\end{enumerate}
\textn{\textbf{While}} $n\leq N_{\textn{f}}$.
\end{enumerate}
 \textn{\textbf{While}}  $\left(\dfrac{\sum_{n=1}^{N_{\textn{f}}}\|(\vecsigma^{k,n}_{h},p^{k,n}_{h}) - (\vecsigma^{k-1,n}_{h},p^{k-1,n}_{h})\|^{2}}{\sum_{n=1}^{N_{\textn{f}}}\| (\vecsigma^{k-1,n}_{h},p^{k-1,n}_{h})\|^{2}}\right)^{\frac{1}{2}}\geq \epsilon$.\hfill\eqnum\label{class_stp_critera1}
 \end{enumerate}}
\end{algo}

We present  now  the  nonconforming-in-time counterpart of Algorithm~\ref{original_fs} in the spirit of 
multi-rate fixed-stress scheme specified in~\cite{almani2016convergence}:
\begin{algo}[The nonconforming-in-time (multi-rate) fixed-stress]~\label{multi-rate_discrete} 
{
\setlist[enumerate]{topsep=0pt,itemsep=-1ex,partopsep=1ex,parsep=1ex,leftmargin=1.5\parindent,font=\upshape}
\begin{enumerate}
 \item  Chose an initial approximation $\vecsigma^{0}_{\htau}\in P^{0}_{\tau}(\mathbf{W}_{h})$ of 
$\vecsigma$, a real constant $\beta>0$,  and a tolerance $\epsilon>0$. 
Set $\ell\eq-\delta_{\textn{fm}}$.
\item \textn{\textbf{Do}}
\begin{enumerate}
\item Increase $\ell\eq \ell+\delta_{\textn{fm}}$ and set $k\eq-1$.
\item \textn{\textbf{Do}}
\begin{enumerate}
\item Increase $k\eq k+1$ and set $m\eq0$.
\item \textn{\textbf{Do}}
\begin{enumerate}
\item Increase $m\eq m+1$.
\item Approximate   $(\vecw_{h}^{k,\ell+m},p^{k,\ell+m}_{h})\in \mathbf{W}_{h} \times Q_{h}$, solution to
    \bse\label{fstress_iterative_flow_multi-rate}\begin{alignat}{4}
&(\vK^{-1}\vecw^{k,\ell+m}_{h},\vecv)-(p^{k,\ell+m}_{h},\divecv\vecv)=0,&&\quad\forall \vecv\in \mathbf{W}_{h}.\label{fstress_iterative_flow_darcy_multi-rate}\\
\nonumber&(c_{0}+c_{r}+\beta)(\partial_{t} p_{h}^{k,\ell+m},q)+(\divecv\vecw_{h}^{k,\ell+m},q)= (g^{\ell+m},q)&&\\
&\qquad\qquad +\beta(\partial_{t}p_{h}^{k-1,\ell+m},q)
-\dfrac{c_{r}}{d\alpha}(\dfrac{\vecsigma_{h}^{k-1,\ell+\delta_{\textn{fm}}}-\vecsigma_{h}^{k-1,\ell}}{\tau^{\ell+\delta_{\textn{fm}}}_{\textn{m}}},q\II),&&\quad\forall q\in Q_{h},\label{fstress_iterative_flow_cons_multi-rate}
\end{alignat}\ese
\end{enumerate}
\textn{\textbf{While}} $m\leq \delta_{\textn{fm}}$.
\item Approximate $(\vecsigma_{h}^{k,\ell+\delta_{\textn{fm}}},\vecu_{h}^{k,\ell+\delta_{\textn{fm}}},\veczeta_{h}^{k,\ell+\delta_{\textn{fm}}})\in \amsmathbb{W}_{h}\times \mathbf{Q}_{h}\times 
\amsmathbb{Q}_{\textn{sk},h}$, solution to
\bse\label{fstress_iterative_mech_multi-rate}\begin{alignat}{4}
&(\mathcal{A}\vecsigma_{h}^{k,\ell+\delta_{\textn{fm}}},\vectau)+(\vecu_{h}^{k,\ell+\delta_{\textn{fm}}},\divecv\vectau)+(\veczeta_{h}^{k,\ell+\delta_{\textn{fm}}},\vectau)=-\dfrac{c_{r}}{d\alpha}(p_{h}^{k,\ell+\delta_{\textn{fm}}} \II ,\vectau),\quad\forall \vectau\in \amsmathbb{W}_{h},&&\label{fstress_iterative_mech_stress_multi-rate}\\
&(\divecv\vecsigma_{h}^{k,\ell+\delta_{\textn{fm}}},\vecz)+(\vecsigma_{h}^{k,\ell+\delta_{\textn{fm}}},\vecgamma)=-(\vf^{\ell+\delta_{\textn{fm}}},\vecz),\quad\forall (\vecz,\vecgamma)\in \mathbf{Q}_{h}\times \amsmathbb{Q}_{\textn{sk},h},\label{fstress_iterative_cons_multi-rate}
\end{alignat}\ese 
\end{enumerate}
 \textn{\textbf{While}}   $\left(\dfrac{\|\vecsigma^{k,\ell+\delta_{\textn{fm}}}_{h} - \vecsigma^{k-1,\ell+\delta_{\textn{fm}}}_{h}\|^{2}+\sum_{m=1}^{\delta_{\textn{fm}}}\|p^{k,\ell+m}_{h} - p^{k-1,\ell+m}_{h}\|^{2}}{\| \vecsigma^{k-1,\ell+\delta_{\textn{fm}}}_{h}\|^{2}+\sum_{m=1}^{\delta_{\textn{fm}}}\| p^{k-1,\ell+m}_{h}\|^{2}}\right)^{\frac{1}{2}}\geq \epsilon$.\hfill\eqnum\label{class_stp_critera2}
 \end{enumerate}
\textn{\textbf{While}} $\ell <\delta_{\textn{fm}} N_{\textn{m}}$.
\end{enumerate}
}
\end{algo}
\begin{rem}[The multi-rate FS]
The convergence of the multi-rate fixed-stress was 
shown in~\cite{almani2016convergence}  where  mixed finite element method is  used 
for the flow equations and where the mechanics is solved
by conformal Galerkin method. Therein, the algorithm is also limited to one coarser time step for the mechanics 
and there is no study on  the propagation of error due to 
temporal and spatial discretizations.
\end{rem}
\begin{rem}[Space-time  vs multi-rate]
We first notice that Algorithm~\ref{multi-rate_discrete}  is   practical 
to problems  with a long time integration interval; In contrast  to Algorithm~\ref{space_time_discrete}, it  
requires reasonable computation ability and less storage resources to handle large-scale applications. 
Furthermore,  Algorithm~\ref{multi-rate_discrete} is exploiting the different time scales for 
the  flow and mechanics subsystems. 
We note also that
the efficiency of the two algorithms can be   improved  when the free parameter $\beta$ is  well-chosen (see~\cite{storvik2018optimization}) and   that 
step 2.(d) of Algorithm~\ref{space_time_discrete}  in practice,  
is  done in parallel as in~\cite{borregales2018parallel}.
\end{rem}

\subsection{Post-processing} \label{sec:post-processing}
We do here some improvements to   the    approximate  solution 
$(p^{k}_{\htau},\vecu^{k}_{\htau})$~(cf.~\cite{ern2010posteriori,lee2016robust}). This step is also
mandatory  to  design from Algorithm~\ref{space_time_discrete} and~\ref{multi-rate_discrete},  their
adaptive   versions based on  energy-norm-type a posteriori error estimate. This
is customary in  mixed finite elements schemes, 
as an energy-norm-type a posteriori error estimate has no meaning for the piecewise constant, i.e., 
$\nabla p^{k,n}_{h}=\nabla \vecu^{k,n}_{h}=0$. 

Let us  notice first that  in~Algorithm~\ref{multi-rate_discrete}, the approximate solution  
$(\vecu^{k}_{\htau},\sigmabo^{k}_{\htau})$ of the mechanics problem is defined 
in   different time grids from the    approximate flow  solution 
$(p^{k}_{\htau},\vecw^{k}_{\htau})$, so  we cannot proceed to the post-processing of 
the displacement and the reconstruction of  the stress tensor unless we build the couple 
$(\vecu^{k}_{\htau}, \sigmabo^{k}_{\htau})$ at the finer time steps $t^{n}_{\textn{f}}=t^{\ell+m}_{\textn{f}}$, 
for all $\ell=0,\delta_{\textn{fm}},2\delta_{\textn{fm}},3\delta_{\textn{fm}},\cdots,(N_{\textn{m}}-1)\delta_{\textn{fm}}$, and for all $1\leq m\leq \delta_{\textn{fm}}-1$. 
To this aim, we  construct
the displacement  and the stress tensor as follows: for $\ell=0,\delta_{\textn{fm}},2\delta_{\textn{fm}},3\delta_{\textn{fm}},\cdots,(N_{\textn{m}}-1)\delta_{\textn{fm}}$, we set
\bse\label{potsprossessing_displastress_finestep}\begin{alignat}{2}
 \vecu^{k,\ell+m}_{h}&\eq\vecu^{k,\ell}_{h}+\dfrac{m}{\delta_{\textn{fm}}}\vecu^{k,\ell+\delta_{\textn{fm}}}_{h},\qquad 1\leq  m \leq \delta_{\textn{fm}}-1,\\
 \vecsigma^{k,\ell+m}_{h}&\eq\vecsigma^{k,\ell}_{h}+\dfrac{m}{\delta_{\textn{fm}}}\vecsigma^{k,\ell+\delta_{\textn{fm}}}_{h},\qquad 1 \leq m \leq \delta_{\textn{fm}}-1.
\end{alignat}\ese
Note that this post-processing is explicit and its cost is negligible. 
The post-processing   of the pressure  $p^{k,n}_{h}$ is as follows~\cite{ern2010posteriori,lee2016robust,HannukainenSV12}:  
at each iteration $k\geq 1$,  we 
calculate the improved solution 
$\widetilde{p}^{k,n}_{h}\in\amsmathbb{P}_{2}(\Tau_{h})$ in each element $K\in \Tau_{h}$ such that
\bse\label{potsprossessing_pressure}\begin{alignat}{2}
\label{potsprossessing_pressure_eq} -\vK\nabla \widetilde{p}^{k,n}_{h}&=\vecw^{k,n}_{h},&&\quad\forall K\in \Tau_{h},\\
\label{potsprossessing_pressure_mean} (\widetilde{p}^{k,n}_{h},1)_{K}&=(p^{k,n}_{h},1)_{K},&&\quad\forall K\in \Tau_{h}.
\end{alignat}\ese
This  post-processing is computationally cheap and easy to be implemented.  We then  extend this  post-processing 
(cf. \cite{lee2016robust}) to the vector case, leading to 
define a function   $\widetilde{\vecu}^{k,n}_{h}\in\left[\amsmathbb{P}_{2}(\Tau_{h})\right]^{d}$, such that
\bse\label{potsprossessing_displacement}\begin{alignat}{2}
\label{potsprossessing_displacement_eq}  \nabla\widetilde{\vecu}^{k,n}_{h}-\dfrac{c_{r}}{d\alpha} p^{k,n}_{h}\II-\veczeta^{k,n}_{h}&=\mathcal{A}\vecsigma^{k,n}_{h},&&\quad\forall K\in \Tau_{h},\\
\label{potsprossessing_displacement_mean} \dfrac{(\widetilde{\vecu}^{k,n}_{h},\mathbf{e}_{i})_{K}}{|K|}&=\vecu^{i,k,n}_{h}|_{K},&&\quad i=1,\cdots,d,\,\,\forall K\in \Tau_{h},
\end{alignat}\ese
where  $\mathbf{e}_{i}\in\RR^{d}$ denotes  the $i$-th Euclidean unit vector.
 We finally  assume for simplicity that the initial conditions  are  satisfied  exactly,  i.e., 
 \begin{equation}\label{initial_data_satisfied}
\widetilde{p}^{k}_{\htau}(\cdot,0)= p_{0},\quad\textn{ and }\quad\widetilde{\vecu}^{k}_{\htau}(\cdot,0)= \vecu_{0}.  
 \end{equation}
  We define the continuous and piecewise affine in--time functions $\widetilde{p}^{k}_{\htau}$ and $\widetilde{\vecu}^{k}_{\htau}$ by
\begin{equation}\label{postproc_continuous_intime}
 \widetilde{p}^{k}_{\htau}(\cdot,t^{n}_{\textnormal{f}})=\widetilde{p}^{k,n}_{h},\quad \widetilde{\vecu}^{k}_{\htau}(\cdot,t^{n}_{\textnormal{f}})=\widetilde{\vecu}^{k,n}_{h},\quad 0\leq n\leq N_{\textnormal{f}}.
 \end{equation}
The key observation in the above post-processing is that they  use only local
operations, which are independent from each other
and hence parallelizable.
\section{The adaptive fixed-stress algorithms}
\label{sec:adaptive_discrete_algorithms}
The purpose of this section  is to  reduce as much as possible   
the computational effort of Algorithm~\ref{space_time_discrete} and~\ref{multi-rate_discrete} 
as in~\cite{
ahmed:hal-01540956,hassan2017posteriori, di2015adaptive,adaptnewernvohralik}. 
The improvements of these two standards algorithms stems from (i) {important savings in terms 
of the number of  coupling iterations can be achieved using adaptive stopping criterion} (ii)
{a significant gain in the computational resources is obtained  by balancing the error components via an asynchronous adaptivity of the  temporal meshes} (iii) 
{optimizing the tuning parameter $\beta$}. 
\subsection{Methodology for  adaptive  asynchronous  time-stepping and adaptive stopping criteria}\label{sec:balanc_stopping}
Let $\eta^{k,n}_{\textn{sp,J}}$, $\eta^{k,n}_{\textn{tm,J}}$ and 
$\eta^{k,n}_{\textn{it,J}}$, for $\textn{J}=\textn{P,\,U}$,  be respectively the estimators of 
\textit{the spatial discretization error}, 
\textit{the temporal discretization error} and \textit{the fixed-stress coupling error} at the 
$n\textn{-th}$ time step and on the $k\textn{-th}$ iteration, where the index
$\textn{J=P}$ is for \textit{the pressure error components}, and  that of $\textn{J=U}$ is for   
\textit{the displacement error components}. We let $\eta^{k,n}_{\bullet}
\eq\eta^{k,n}_{\bullet,\textn{P}} +\eta^{k,n}_{\bullet,\textn{U}}$.

The first step of our developments  is to equip~Algorithm~\ref{space_time_discrete} and~\ref{multi-rate_discrete}  
with  adaptive asynchronous time-stepping.  To this aim, we propose 
to equilibrate the time errors with the spatial errors as follows; we adjust the  time steps 
$\tau^{n}_{\textn{f}}$ and $\tau^{n}_{\textn{m}}$ so that
\begin{equation}\label{space_time_balance}
 \gamma_{\textn{tm,J}}\eta^{k,n}_{\textn{sp,J}}\leq \eta^{k,n}_{\textn{tm,J}}\leq \Gamma_{\textn{tm,J}} \eta^{k,n}_{\textn{sp,J}},\quad\textn{J=P,\,U},
\end{equation}
where $\gamma_{\textn{tm,J}}$ and $\Gamma_{\textn{tm,J}}$, $\textn{J=P,\,U},$ are  user-given weights, typically close to $1$.
{
An alternative to~\eqref{space_time_balance} being  to balance the time errors from the flow  and   mechanics 
discretization with the global error
by selecting the time steps $\tau^{n}_{\textn{m}}$ and $\tau^{n}_{\textn{f}}$  in such a way that
 \begin{equation}\label{flow__mech_time_balance}
  \gamma_{\textn{tm,J}}\max(\eta^{k,n}_{\textn{sp,U}},\eta^{k,n}_{\textn{sp,P}})\leq \eta^{k,n}_{\textn{tm,J}}\leq \Gamma_{\textn{tm,U}} \max(\eta^{k,n}_{\textn{sp,U}},\eta^{k,n}_{\textn{sp,P}}),\quad\textn{J=P,\,U}.
 \end{equation}  }
 {
The  balancing criterion~\eqref{space_time_balance}  controls  
 the contributions of $\eta^{k,n}_{\textn{tm,P}}$ and 
 $\eta^{k,n}_{\textn{tm,U}}$ in the
 overall error and leads to 
 $\eta^{k,n}_{\textn{sp}}\approx\eta^{k,n}_{\textn{tm}}$. That of the second criterion~\eqref{flow__mech_time_balance}   leads  to equilibrate the time errors from the flow and mechanics, i.e., 
 $\eta^{k,n}_{\textn{tm,P}} \approx\eta^{k,n}_{\textn{tm,U}}$.}
 

The second step of our developments is to equip Algorithm~\ref{space_time_discrete} and~\ref{multi-rate_discrete}
with  adaptive stopping criteria. We then
introduce a real parameter $\gamma_{\textn{it}}$ to  be
given in $(0,1)$. The stopping criteria for Algorithm~\ref{space_time_discrete}
 is  chosen at each iteration $k$ as
\begin{equation}
  \label{Criteria_space_time}
  \eta^{k}_{\textn{it}}\leq \gamma_{\textn{it}} \max\Big\{\eta^{k}_{\textn{sp}}
  ,\eta^{k}_{\textn{tm}}\Big\},
\end{equation}
which implies that we  stop the iterations  if the coupling error is
sufficiently lower than  one of the other components. The stopping criteria for 
Algorithm~\ref{multi-rate_discrete}  is set similarly: 
at each coarse  mechanics time step  $\ell=0,\delta_{\textn{fm}},2\delta_{\textn{fm}},\cdots, (N_{\textn{m}}-1)\delta_{\textn{fm}}$,  at each iteration $k\geq 1$, 
\begin{equation}
  \label{Criteria_multi-rate}
  \sum_{m=1}^{\delta_{\textn{fm}}}\eta^{k,\ell+m}_{\textn{it}}\leq \gamma_{\textn{it}} \max\Big\{\sum_{m=1}^{\delta_{\textn{fm}}}\eta^{k,\ell+m}_{\textn{sp}}
  ,\sum_{m=1}^{\delta_{\textn{fm}}}\eta^{k,\ell+m}_{\textn{tm}}\Big\}.
\end{equation}
\begin{rem}[Algebraic errors]
 The  systems within the flow and  mechanics subsystems are solved with 
 direct solvers.  The present adaptive approach can also be combined with an iterative solver for each subproblem, 
 and to further save computational effort, these latter can be stopped whenever the algebraic errors does not contribute 
 significantly to the overall error, following~\cite{jiranek2010posteriori}. 
\end{rem}
\subsection{The adaptive algorithms}
We are now ready to present the adaptive counterparts of Algorithm~\ref{space_time_discrete} and~\ref{multi-rate_discrete}, i.e., 
they are now equipped  with adaptive asynchronous 
time-stepping and a posteriori stopping criterion. 
The adaptive version of Algorithm~\ref{space_time_discrete} 
is as follows:
\begin{algo}[Adaptive  Fixed-Stress with Asynchronous Time Mesh Refinement and Adaptive Stopping Criteria ]~\label{space_time_discrete_adaptive}
{
\setlist[enumerate]{topsep=0pt,itemsep=-1ex,partopsep=1ex,parsep=1ex,leftmargin=1.5\parindent,font=\upshape}
\begin{enumerate}
\item In step~1, chose also a real  parameter $\gamma_{\textn{it}}$ and the 
real  weights $\gamma_{\textn{tm}}$ and $\Gamma_{\textn{tm}}$, set $t_{\textn{f}}^{0}=0$, and give an initial ratio $\delta_{\textn{fm}}^{0}$ and time step for the flow $\tau^{0}_{\textn{f}}$ 
and the temporal refinement threshold $\tau_{\textn{min}}$. Set $k\eq-1$.
\item \textn{\textbf{Do}}
\begin{enumerate}
\item Increase $k\eq k+1$ and set $n\eq0$.
\item \textn{\textbf{Do}}
\begin{enumerate}
\item Increase $n\eq n+1$.
\item Set $\tau^{n}_{\textn{f}}\eq \tau^{n-1}_{\textn{f}}$.
\item Approximate   $(\vecw_{h}^{k,n},p^{k,n}_{h})$ by~\eqref{fstress_iterative_flow_std}.
\item Calculate the estimators $\eta^{k,n}_{\textn{sp,P}}$, 
$\eta^{k,n}_{\textn{tm,P}}$ and $\eta^{k,n}_{\textn{it,P}}$.~Check the balancing criterion~\eqref{space_time_balance} \textn{(}or~\eqref{flow__mech_time_balance}\textn{)}.~If not satisfied, refine or redefine the flow time step $\tau^{n}_{\textn{f}}$ in such 
a way that condition~\eqref{space_time_balance} \textn{(}or~\eqref{flow__mech_time_balance}\textn{)} holds or $\tau^{n}_{\textn{f}}\leq \tau_{\textn{min}}$, and return to step~\textn{2.(b)iii}.
\item Set $t ^{n}_{\textn{f}}\eq t^{n-1}_{\textn{f}}+\tau^{n}_{\textn{f}}$.
\end{enumerate}
 \textn{\textbf{While}} $t_{\textn{f}}^{n}\leq T$.
\item Reset $n\eq0$ and  $t_{\textn{m}}^{0}=0$, and let 
 $\tau^{0}_{\textn{m}}=\delta_{\textn{fm}}^{0}\tau^{0}_{\textn{f}}$.
\item \textn{\textbf{Do}}
\begin{enumerate}
 \item Increase $n\eq n+1$.
\item Set $\tau^{n}_{\textn{m}}\eq \tau^{n-1}_{\textn{m}}$.
\item Approximate $(\vecsigma_{h}^{k,n},\vecu_{h}^{k,n},\veczeta_{h}^{k,n})$ by~\eqref{fstress_iterative_mech_std}.
\item Calculate the estimators $\eta^{k,n}_{\textn{sp,U}}$, 
$\eta^{k,n}_{\textn{tm,U}}$ and $\eta^{k,n}_{\textn{it,U}}$.~Check the balancing criterion~\eqref{space_time_balance} \textn{(}or~\eqref{flow__mech_time_balance}\textn{)}.~If not satisfied, redefine the  time step for the mechanics $\tau^{n}_{\textn{m}}$ using 
using $\left(\tau^{n}_{\textn{f}}\right)_{n\geq 0}$ in such a way that condition~\eqref{space_time_balance} \textn{(}or~\eqref{flow__mech_time_balance}\textn{)} holds, and return to step~\textn{2.(d)iii}.
\item Set $t ^{n}_{\textn{m}}\eq t^{n-1}_{\textn{m}}+\tau^{n}_{\textn{m}}$.
\end{enumerate}
\textn{\textbf{While}} $t_{\textn{m}}^{n}\leq T$.
\end{enumerate}
\textn{\textbf{Until}} the criteria~\eqref{Criteria_space_time}  is satisfied.
\end{enumerate}
}
\end{algo}
Similarly, we propose to modify  Algorithm~\ref{multi-rate_discrete}. This yields to  an 
adaptive fixed-stress scheme equivalent to Algorithm~\ref{space_time_discrete_adaptive} 
but applied through \textit{temporal windowing technique}. Precisely, the whole time interval $[0,T]$ is now split into
$N$ time-windows $[0,T_{1} ]$, $[T_{1},T_{2}],\cdots,[T_{N-1},T]$.
Algorithm~\ref{space_time_discrete_adaptive} is first applied on the first
time window $[0,T_{1} ]$. Afterwards, one applies the algorithm on
the next time window $[T_{i-1},T_{i} ]$
imposing as initial condition for $t^{0}_{\textn{f}}= t^{0}_{\textn{m}}=T_{i-1}$
the solution of the converged iterate of the end of the previous time window and 
proceeds in such a way until all time windows have been treated. 
\begin{rem}[Space  adaptivity]
The  estimators are calculated on each element of the mesh
and on each time step, and could also be used as indicators in order to
refine adaptively the   spatial mesh~$\Tau_{h}$,  so that the local spatial error 
estimators are distributed equally;  see~\cite{AHMED2018, di2015adaptive,Vohralik_Wheeler2013} and
the references therein.  Furthemore, the  efficiency of the adaptive algorithms can be enhanced by using adaptive multiscale meshes, where 
two spatial meshes  for the flow and mechanics subsystems are considered   and where they are 
refined/coarsened adaptively  in order  to equilibrate the space  errors for the two  subsystems; see the 
multiscale discretizations techniques  in~\cite{DANA20181,pencheva2013robust}.
\end{rem}
\begin{rem}[Static condensation]
 The decoupling procedure permits  the use of a local static condensation for the flow and mechanics  and then   to reduce the MFE system resulting from each subproblem to a symmetric and positive definite one;    for  the  pressure for the flow problem, and for the  displacement and rotation for the mechanics problem;   with the same way as in \cite{ambartsumyan2018multipoint1,ambartsumyan2018multipoint2}.  These systems are smaller and easier to solve than the original saddle point problems, but no further reduction is possible. 
\end{rem}

 \begin{rem}[Computational cost]
 In practice,  the adaptive time-stepping  strategy  is only done in  the first or second iterations of the Algorithm.   Further, when the  step size is modified  once at current time step, the updated step size can be  used for  some subsequent time steps, say for example $5$ time steps. Then  on the sixth time step,  step~2.(b)iv
is checked again  if the step size needs to be modified for the next $5$ time  steps. Furthermore,  steps~2.(b)iv and~2.(d)iv can be  done only every few iterations of the Algorithm. 
 \end{rem}

\section{Concept of $H^1$-, and  $\mathbf{H}(\textnormal{div})$-conforming reconstructions in Biot's poro-elasticity system } \label{sec:reconstructions}
In this section, we develop basic tools that will allow us to build the estimators involved in the adaptive 
fixed-stress algorithms.
\subsection{Pressure and displacement  reconstructions}
We first  construct from $\widetilde{p}^{k,n}_{h}$,  a $H^{1}_{0}$-conforming function $\hat{p}^{k,n}_{h}$ 
satisfying the  mean  value constraint
\begin{equation}\label{rec_pressure_mean}
 \left(\hat{p}^{k,n}_{h},1\right)=\left(\widetilde{p}^{k,n}_{h},1\right)_{K},\qquad  \forall K\in \Tau_{h}.
\end{equation}
To this aim, we proceed as in~\cite{ern2010posteriori}; from the available post-processed pressure $\widetilde{p}^{k,n}_{h}$
at each iteration $k\geq 1$,   we set
\[  {\hat{p}}^{k,n}_{h}(\veca) \eq \Iavo(\widetilde{p}^{k,n}_{h})(\veca)+\sum_{K\in\Tau_{h}}a_{K}^{k,n}b_{K}(\veca).
\]
 Here $\veca$ are the  Lagrangian nodes  situated in the interior of $\Omega$, 
 $b_{K}$ denotes the standard (time-independent) bubble function supported on $K$, 
 defined as the product of the barycentric coordinates
of $K$, for all $K\in\Tau_{h}$, and scaled so that its maximal value is 1, and
$\Iavo : \bP_2(\Tau_{h}) \rightarrow \bP_2(\Tau_{h}) \cap
H^1(\Omega)$ is the interpolation operator given by
$$
\Iavo(\phi_h)(\veca)=\frac{1}{|\Tau_{\veca}|} \sum_{K \in \Tau^{\veca}_{h}} \phi_h|_K(\veca).
$$
At the Lagrange nodes $\veca$ situated on the boundary
$\partial\Omega$, we set ${\hat{p}}^{k,n}_{h}(\veca) \eq 0$. In order to guarantee that the 
mean value constraint~\eqref{rec_pressure_mean} holds true, we choose
\begin{equation}
 a^{k,n}_{K}=\dfrac{1}{(b_{K},1)_{K}}\left(\widetilde{p}^{k,n}_{h}-\Iavo(\widetilde{p}^{k,n}_{h}),1\right)_{K}.
\end{equation}
The same procedure can be applied to the post-processed displacement 
$\widetilde{\vecu}^{k,n}_{h}\in\left[\amsmathbb{P}_{2}(\Tau_{h})\right]^{d}$. This  
leads to a $\mathbf{H}^{1}_{0}(\Omega)$-conforming vector function $\hat{\vecu}^{k,n}_{h}$, satisfying the 
following mean  value constraint:
\begin{equation}\label{rec_displacement_mean}
\qquad\qquad\qquad\left(\hat{\vecu}^{k,n}_{h},\mathbf{e}_{i}\right)=\left(\widetilde{\vecu}^{k,n}_{h},\mathbf{e}_{i}\right)_{K},\qquad i=1,\cdots,d,\,\, \forall K\in \Tau_{h}.
\end{equation}
We end up with  continuous  and piecewise affine in--time functions $\widetilde{p}^{k}_{\htau}$ and $\widetilde{\vecu}^{k}_{\htau}$ by setting
\begin{equation}\label{rec_continuous_intime}
 \hat{p}^{k}_{\htau}(\cdot,t^{n}_{\textnormal{f}})=\hat{p}^{k,n}_{h},\quad \hat{\vecu}^{k}_{\htau}(\cdot,t^{n}_{\textnormal{f}})=\hat{\vecu}^{k,n}_{h},\quad 0\leq n\leq N_{\textnormal{f}}.
 \end{equation}
An interesting result of the above reconstructions is given in the following Lemma (cf.~\cite{AHMED2018}):
\begin{lem}[Properties of $(\hat{p}^{k}_{\htau},\hat{\vecu}^{k}_{\htau})$]\label{Lemma:meanvalues}
 At each 
 iteration $k\geq 1$ of Algorithm~\ref{space_time_discrete_adaptive}, let 
 $(\widetilde{p}^{k}_{\htau},\widetilde{\vecu}^{k}_{\htau})$ be the 
 post-processed pressure and displacement,
 and  $(\hat{p}^{k}_{\htau},\hat{\vecu}^{k}_{\htau})$ be the 
 reconstructed pressure and displacement.
Then, for all $1\leq n\leq N_{\textn{f}}$, there holds
\label{mean_values_lemma}\begin{alignat}{2}\label{mean_values_lemma1}
\left(\partial_{t}\varphi(\hat{p}^{k,n}_{h},\hat{\vecu}^{k,n}_{h}),1\right)_{K}&=\left(\partial_{t}\varphi(\widetilde{p}^{k,n}_{h},\widetilde{\vecu}^{k,n}_{h}),1\right)_{K},&&\quad\forall K\in \Tau_{h}.
\end{alignat}
\end{lem}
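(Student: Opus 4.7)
I would exploit the linearity of $\varphi(p,\vecu)=c_{0}p+\alpha\,\divecv\vecu$ together with the linearity of the backward finite difference~\eqref{dtnfunc}. Writing
$(\partial_{t}\varphi(\widehat p^{k,n}_{h},\widehat\vecu^{k,n}_{h}),1)_{K}=\frac{c_{0}}{\tau^{n}_{\textn{f}}}[(\widehat p^{k,n}_{h},1)_{K}-(\widehat p^{k,n-1}_{h},1)_{K}]+\frac{\alpha}{\tau^{n}_{\textn{f}}}[(\divecv\widehat\vecu^{k,n}_{h},1)_{K}-(\divecv\widehat\vecu^{k,n-1}_{h},1)_{K}]$,
and performing the same expansion with $(\widetilde p,\widetilde\vecu)$, it suffices to prove, at every time level $0\le n\le N_{\textn{f}}$ and every $K\in\Tau_{h}$, the two element-wise identities $(\widehat p^{k,n}_{h},1)_{K}=(\widetilde p^{k,n}_{h},1)_{K}$ and $(\divecv\widehat\vecu^{k,n}_{h},1)_{K}=(\divecv\widetilde\vecu^{k,n}_{h},1)_{K}$. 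The first of these is exactly the constraint~\eqref{rec_pressure_mean} built into the choice of the bubble coefficient $a_{K}^{k,n}$ in the definition of $\widehat p^{k,n}_{h}$, so nothing further is needed for the pressure contribution.

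For the divergence identity I would use the decomposition $\widehat\vecu^{k,n}_{h}|_{K}=\Iavo(\widetilde\vecu^{k,n}_{h})|_{K}+\veca_{K}^{k,n}b_{K}$, since bubble functions supported in neighbouring cells vanish identically on $K$. The bubble correction does not affect the divergence integral on $K$: because $b_{K}$ vanishes on $\partial K$, integration by parts gives $\int_{K}\divecv(\veca_{K}^{k,n}b_{K})\,\dx=\veca_{K}^{k,n}\cdot\int_{\partial K}b_{K}\vecn\,\ds=0$. Applying Gauss's theorem to the remaining piece, the claim therefore reduces to the boundary-integral equality $\int_{\partial K}\Iavo(\widetilde\vecu^{k,n}_{h})\cdot\vecn\,\ds=\int_{\partial K}\widetilde\vecu^{k,n}_{h}|_{K}\cdot\vecn\,\ds$. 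Here I would combine the componentwise mean-value constraint~\eqref{rec_displacement_mean} with the post-processing relation~\eqref{potsprossessing_displacement_mean}, which pins both $(\widetilde\vecu^{k,n}_{h},\mathbf{e}_{i})_{K}$ and $(\widehat\vecu^{k,n}_{h},\mathbf{e}_{i})_{K}$ to $|K|\,\vecu^{i,k,n}_{h}|_{K}$; this is the same element-independent value on every cell sharing a given Lagrange node.

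The main obstacle is precisely this boundary-integral step: element-wise mean-value preservation alone does not guarantee equal fluxes through $\partial K$ of $\Iavo(\widetilde\vecu^{k,n}_{h})$ and of the one-sided trace $\widetilde\vecu^{k,n}_{h}|_{K}$. My plan would be to evaluate both integrals face by face, using the exactness of Simpson's rule for $\amsmathbb{P}_{2}$ polynomials on each face of $\partial K$, and then to exploit the fact that, by~\eqref{potsprossessing_displacement_mean}, the local means of $\widetilde\vecu^{k,n}_{h}$ coincide with the single piecewise-constant value $\vecu^{k,n}_{h}|_{K}$ on every element around each Lagrange node. This element-independence of the nodal mean is what renders the Oswald-type averaging $\Iavo$ compatible with the divergence integral, allowing the face contributions to cancel correctly.

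Once this face-by-face compatibility is in place, assembling it with the vanishing bubble contribution closes the identity $(\divecv\widehat\vecu^{k,n}_{h},1)_{K}=(\divecv\widetilde\vecu^{k,n}_{h},1)_{K}$. Taking the difference of this identity at the consecutive time levels $n-1$ and $n$, multiplying by $1/\tau^{n}_{\textn{f}}$, and adding the analogous pressure difference from~\eqref{rec_pressure_mean} yields the stated equality~\eqref{mean_values_lemma1}.
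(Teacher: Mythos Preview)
Your reduction to the two element-wise identities is the right start, and the pressure identity is indeed immediate from~\eqref{rec_pressure_mean}. The paper itself does not prove the lemma here but defers to~\cite{AHMED2018}, so there is no detailed argument in the paper to compare against.

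However, your treatment of the divergence identity $(\divecv\hat\vecu^{k,n}_{h},1)_{K}=(\divecv\widetilde\vecu^{k,n}_{h},1)_{K}$ has a genuine gap. The step you yourself flag as ``the main obstacle'' is not resolved by the reasoning you propose. The mean-value constraint~\eqref{rec_displacement_mean} together with~\eqref{potsprossessing_displacement_mean} only says that $(\widetilde\vecu^{k,n}_{h},\mathbf{e}_{i})_{K}/|K|=\vecu^{i,k,n}_{h}|_{K}$ on each $K$, and $\vecu^{k,n}_{h}$ is merely \emph{piecewise} constant---it is not the same value on every element of the patch $\Tau^{\veca}_{h}$. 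There is therefore no ``element-independence of the nodal mean'' to exploit, and your face-by-face Simpson argument cannot close: for a Lagrange node $\veca$ on a face $F=\partial K\cap\partial K'$, the Oswald average $\Iavo(\widetilde\vecu^{k,n}_{h})(\veca)$ mixes in the one-sided trace $\widetilde\vecu^{k,n}_{h}|_{K'}(\veca)$, and nothing in the component-wise mean preservation prevents $\int_{F}\Iavo(\widetilde\vecu^{k,n}_{h})\cdot\vecn$ from differing from $\int_{F}\widetilde\vecu^{k,n}_{h}|_{K}\cdot\vecn$.

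More generally, component-wise mean preservation alone cannot imply preservation of the element-wise divergence integral for a generic broken $\amsmathbb{P}_{2}$ field: the bubble correction fixes $\int_{K}\hat\vecu^{k,n}_{h}$ but contributes nothing to $\int_{\partial K}\hat\vecu^{k,n}_{h}\cdot\vecn$ (as you correctly note), while the Oswald averaging is free to alter the latter. If the identity holds, it must rely on additional structure of the specific post-processed field $\widetilde\vecu^{k,n}_{h}$ coming from~\eqref{potsprossessing_displacement_eq}--\eqref{potsprossessing_displacement_mean} and from the discrete equations satisfied by $\vecsigma^{k,n}_{h}$, $\veczeta^{k,n}_{h}$, $p^{k,n}_{h}$, not from the averaging/bubble construction in isolation. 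Your argument does not invoke any such structure, so as written it does not establish the displacement contribution to~\eqref{mean_values_lemma1}.
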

\subsection{Equilibrated flux $\hat{\vecw}^{k}_{\htau}$ and 
stress $\hat{\sigmabo}^{k}_{\htau}$ reconstructions}
The second  ingredient for the derivation of our a posteriori error estimates is  to 
reconstruct  an equilibrated flux  $\hat{\vecw}^{k}_{\htau} \in P^{0}_{\tau}(\Hdiv)$,  locally conservative 
on the mesh $\Tau_h$:
\begin{equation}\label{eq:conservcondition_2_flow}
 \left(g^{n}-\partial_{t}\varphi(\hat{p}_{h}^{k,n},\hat{\vecu}_{h}^{k,n}) -\divecv\hat{\vecw}_{h}^{k,n},1\right)_{K}= 0,\quad\forall K\in\Tau_{h},
\end{equation}
and to  reconstruct  an equilibrated  stress  $\hat{\sigmabo}^{k}_{\htau}\in P^{0}_{\tau}(\Htensordiv)$,    locally conservative on the mesh $\Tau_h$:
   \begin{align}
   \left(\vf^{n}+\divecv\hat{\sigmabo}_{h}^{k,n},\mathbf{e}_{i}\right)_{K}=0,\quad i=1,\cdots,d,\quad\forall& K\in\Tau_{h}.\label{eq:conservcondition_2_stress}
 \end{align}
These reconstructions are based on solving 
local Neumann problems by mixed finite elements posed over patches of elements around mesh
vertices (cf.~\cite{ern2010posteriori,adaptnewernvohralik,RIEDLBECK20171593}). For each vertex $\veca\in \Vauh$, we introduce 
the   mixed  Raviart--Thomas finite element spaces posed on the patch domain $\omega_{\veca}$:
\bse\label{flowspaces_patches}\begin{alignat*}{2}
&Q^{\veca}_{h}:=\displaystyle\{q_{h}\in L^{2}(\omega_{\veca});\,\forall K\in \Tau^{\veca}_h,\,q_{h}|_{K}\in \amsmathbb{P}_{0}(K):(q_{h},1)_{\omega_{\veca}}=0 \},\\
&\mathbf{W}^{\veca}_{h}:=\displaystyle\{ \vecv_{h}\in \mathbf{H}(\divecvv,\omega_{\veca});\,\forall K\in \Tau^{\veca}_h,\,\vecv_{h}|_{K}\in \RTN_0(K):\vecv_{h}\cdot\vecn_{K}=0\,\textn{ on } \partial \omega_{\veca}\setminus\partial \Omega\}.
\end{alignat*}\ese
We then introduce the Arnold--Falk--Winther mixed finite elements spaces, posed on the patch 
domain $\omega_{\veca}$, for all $\veca\in \Vauh$:
\bse\label{mechspaces_patches}\begin{alignat*}{2}
&\mathbf{Q}^{\veca}_{h}:=\displaystyle\{\vecz_{h}\in \mathbf{L}^{2}(\omega_{\veca});\,\forall K\in \Tau^{\veca}_h,\,\vecz_{h}|_{K}\in \left[\amsmathbb{P}_{0}(K)\right]^{d}:(\vecz_{h},\mathbf{e}_{i})_{\omega_{\veca}}=0,\,i=1,\cdots,d \},\\
&\amsmathbb{W}^{\veca}_{h}:=\displaystyle\{ \vectau_{h}\in \amsmathbb{H}(\divecvv,\omega_{\veca});\,\forall K\in \Tau^{\veca}_h,\,\vectau_{h}|_{K}\in \left[\amsmathbb{P}_{1}(K)\right]^{d\times d}:\vectau_{h}\vecn_{K}=0\,\textn{ on } \partial \omega_{\veca}\setminus\partial \Omega\},\\
&\amsmathbb{Q}_{\textn{sk},h}^{\veca}:=\displaystyle\{\vecgamma_{h}\in [L^{2}(\omega_{\veca})]_{\textn{sk}}^{d\times d};\,\forall K\in \Tau^{\veca}_h,\,\vecgamma_{h}|_{K}\in [\amsmathbb{P}_{0}(K)]_{\textn{sk}}^{d\times d} \}.
\end{alignat*}\ese
We  obtain the  equilibrated velocity field $\hat{\vecw}^{k}_{\htau}$, by solving first for 
$(\hat{\vecw}_{\veca}^{k,n},q_{\veca}^{k,n})\in \mathbf{W}^{\veca}_{h}\times Q^{\veca}_{h}$, for all $1\leq n\leq N_{\textn{f}}$, such that 
 \bse\label{localflowreconst}\begin{alignat}{4}
&(\hat{\vecw}^{k,n}_{\veca}-\vecw^{k,n}_{h},\vecv)_{\omega_{\veca}}-(q^{k,n}_{\veca},\divecv\vecv)_{\omega_{\veca}}=0,&&\quad\forall \vecv\in \mathbf{W}^{\veca}_{h},\label{localflowreconst1}\\
&(\divecv\hat{\vecw}_{\veca}^{k,n},z)_{\omega_{\veca}}= (g^{n}-\partial_{t}\varphi(\hat{p}_{h}^{k,n},\hat{\vecu}_{h}^{k,n}),z)_{\omega_{\veca}},&&\quad\forall z\in Q^{\veca}_{h}.\label{localflowreconst2}
\end{alignat}
Then set  
\begin{equation}
           \hat{\vecw}_{h}^{k,n}=\displaystyle\sum_{\veca\in \Vauh}\hat{\vecw}_{\veca}^{k,n}.
\end{equation}\ese
 For the   equilibrated stress $\hat{\sigmabo}_{\htau}^{k}$, we   solve  local Neumann
mechanics problems by mixed finite elements, with weakly symmetric stress: 
find $(\hat{\sigmabo}_{\veca}^{k,n},
\vecz_{\veca}^{k,n},\betabo_{\veca}^{k,n})\in  \amsmathbb{W}^{\veca}_{h}\times \mathbf{Q}^{\veca}_{h}\times \amsmathbb{Q}_{\textn{sk},h}^{\veca}$,
for all $1\leq n\leq N_{\textn{f}}$, such that

\bse\label{localstressreconst}\begin{alignat}{4}
&(\hat{\sigmabo}^{k,n}_{\veca}-\sigmabo^{k,n}_{h},\vectau)_{\omega_{\veca}}+
(\vecz^{k,n}_{\veca},\divecv\vectau)_{\omega_{\veca}}+(\betabo_{\veca}^{k,n},\vectau)_{\omega_{\veca}}=0,&&\quad\forall \vectau\in \amsmathbb{W}^{\veca}_{h},\label{localstressreconst1}\\
&(\divecv\hat{\sigmabo}_{\veca}^{k,n},\vecv)_{\omega_{\veca}}= (-\vf^{n},\vecv)_{\omega_{\veca}},
&&\quad\forall \vecv\in \mathbf{Q}^{\veca}_{h},
\label{localstressreconst2}\\
&(\hat{\vecsigma}_{\veca}^{k,n},\vecgamma)_{\omega_{\veca}}=0,&&\quad\forall \vecgamma\in \amsmathbb{Q}_{\textn{sk},h}^{\veca}.\label{localstressreconst3}
\end{alignat}
Then set  
\begin{equation}
\hat{\vecsigma}_{h}^{k,n}=\displaystyle\sum_{\veca\in \Vauh}\hat{\vecsigma}_{\veca}^{k,n}.
\end{equation}\ese
    The above local problems are well-posed owing to the properties of 
    mixed  finite   elements~(cf. \cite{arnold1985mixed,arnold2007mixed}).  We can easily observe that the computational cost of  the flux and stress reconstructions can be substantially reduced by pre-processing, a  step that is fully parallelizable.

\section{The a posteriori error estimates}\label{sec:Aposteriori}
We derive in this section, based on the previous constructions, 
a posteriori error estimates for the solution of Algorithm~\ref{space_time_discrete} or \ref{multi-rate_discrete}. 
This is done by bounding an energy  error between the exact weak solution $(p,\vecu)$ of 
problem~\eqref{Main_problem_model} and 
the approximate  solution 
$(\widetilde{p}^{k}_{\htau},\widetilde{\vecu}^{k}_{\htau})$, by a guaranteed and 
fully computable upper bound, and this at each iteration $k\geq 1$ of Algorithm~\ref{space_time_discrete} or \ref{multi-rate_discrete}.
\subsection{The error measure}
The first question in a posteriori error estimates is that of the error measure; here we will in particular rely on
~\cite[Theorem 6.2]{AHMED2018},  where an energy-type error in the pressure and displacement is shown to be bounded by the   dual norm of the residuals, and where the Biot's consolidation  equations~\eqref{Weak_problem_mixed_model}, \eqref{Main_problem_model_IC} are discretized
 using MFE method in-space and with a backward Euler scheme in-time and solved monolithically. To this aim, and   for all times $t\in(0,T]$, we let
\begin{equation*}
  Q_{t} \eq L^{2}(0,t;L^{2}(\Omega)),\quad X_{t} \eq L^{2}(0,t;H_{0}^{1}(\Omega)),
  \quad X^{\prime}_{t} \eq L^{2}(0,t;H^{-1}(\Omega)),\quad\mathbf{Z}_{t}\eq H^{1}(0,t;\mathbf{H}^{1}_{0}(\Omega)),
\end{equation*}
and introduce the energy space
$$\mathcal{E}_{t}\eq\left\{(p,\vecu)\,|\, p \in X_{t},\quad \vecu\in \mathbf{Z}_{t},\textn{ such that }\partial_{t}\varphi(p,\vecu)\in 
X^{\prime}_{t}\right\}.$$
Then, we introduce  a weak formulation 
of~\eqref{Main_problem_model}: find $(p,\vecu)\in  \mathcal{E}_{T}$ such that
$p(\cdot,0)=p_0$ and $\vecu(\cdot,0)=\vecu_0$ and such that
\bse \label{weak_primal_form}\begin{alignat}{3}\label{weak_primal_form1}
 &\int_{0}^{T}\langle\partial_{t}\varphi(p,\vecu),q\rangle\dt+\int_{0}^{T}(\vK \nabla p,\nabla q)\dt=\int_{0}^{T}(g,q)\dt,&&\quad\forall q\in X_{T},\\
\label{weak_primal_form3} &\int_{0}^{T}(\vectheta(\vecu),\epsb(\vecv))\dt-\alpha\int_{0}^{T}(p,\divecv\vecv)\dt=-\int_{0}^{T}(\vf,\vecv)\dt,&&\quad\forall \vecv\in \mathbf{X}_{T},
 \end{alignat}\ese
where  $\langle\cdot,\cdot\rangle$ denotes the duality pairing between $H^{-1}(\Omega)$ and $H^{1}_{0}(\Omega)$. 
The existence and uniqueness of the solution to this problem was addressed in~\cite{AHMED2018}.
Still following~\cite{AHMED2018}, we introduce the following energy-type error measure
\bse\begin{alignat}{2}
\nonumber  \|(p-\hat{p}^{k}_{\htau},\vecu-\hat{\vecu}^{k}_{\htau})\|^{2}_{\textn{en}} &\eq 
\dfrac{1}{2}\|(p-\hat{p}^{k}_{\htau},\vecu-\hat{\vecu}^{k}_{\htau})\|^{2}_{\natural_{T}}
   +\dfrac{1}{2}\|\varphi(p-\hat{p}^{k}_{\htau},\vecu-\hat{\vecu}^{k}_{\htau})\|^{2}_{X^{\prime}_{T}}\qquad\qquad\qquad\qquad\;\\
   \nonumber&\qquad+2c_{0}\int_{0}^{T}\Bigg(||p-\hat{p}^{k}_{\htau}||_{Q_{t}}^{2}
   +\int_{0}^{t}||p-\hat{p}^{k}_{\htau}||_{Q_{s}}^{2}e^{t-s} \textn{d}s\Bigg)\textn{d}t\\
  \label{eq:newenergynorm}
    &\qquad\quad   +\int_{0}^{T}\Bigg(||\vecu-\hat{\vecu}^{k}_{\htau}||_{\Xi_{t}}^{2}
   +\int_{0}^{t}||\vecu-\hat{\vecu}^{k}_{\htau}||_{\Xi_{s}}^{2}e^{t-s} \textn{d}s\Bigg)\textn{d}t,
\end{alignat}
where
\begin{alignat}{2}
\label{energy_like_norm}&\|(p-\hat{p}_{\htau}^{k},\vecu-\hat{\vecu}^{k}_{\htau})\|^{2}_{\natural_{t}} \eq  
c_{0}||p-\hat{p}^{k}_{\htau}||_{Q_{t}}^{2}+\dfrac{1}{2}||\vecu-\hat{\vecu}^{k}_{\htau}||_{\Xi_{t}}^{2}+\dfrac{1}{2}||\varphi(p-\hat{p}^{k}_{\htau},\vecu-\hat{\vecu}^{k}_{\htau})(t)||_{H^{-1}(\Omega)}^{2},\\
& ||\vecu-\hat{\vecu}^{k}_{\htau}||^{2}_{\Xi_{t}}\eq2\mu||\epsb(\vecu-\hat{\vecu}^{k}_{\htau})||^{2}_{Q_{t}}+\lambda||\divecv(\vecu-\hat{\vecu}^{k}_{\htau})||^{2}_{Q_{t}}.
\end{alignat}\ese
The above norms are  well-defined owing to the properties of the weak solution 
$(p,\vecu)$ and the reconstructed functions
$(\hat{p}^{k}_{\htau},\hat{\vecu}^{k}_{\htau})$, i.e.,  we have both $(p,\vecu)$ 
 and $(\hat{p}^{k}_{\htau},\hat{\vecu}^{k}_{\htau})$  in  $\mathcal{E}_{T}$.
\subsection{The error estimators} 
 Before formulating our estimators,  we define the
broken Sobolev space $H^{1}(\Tau_{h})$ as the space of all functions $v\in
L^{2}(\Omega)$ such that $v|_{K}\in H^{1}(K)$, for all $K\in\Tau_{h}$. The energy semi-norm on $H^{1}(\Tau_{h})$
is given by
\begin{equation}\label{H1discretenorm}
 |||v |||^{2}\eq \sum_{K\in\Tau_{h}}|||v |||^{2}_{K}=\sum_{K\in\Tau_{h}}||\vK^{\frac{1}{2}}\nabla v||^{2}_{K}, \qquad\forall v\in H^{1}(\Tau_{h}),
\end{equation}
where the sign $\nabla$  denote the element-wise gradient, i.e., the
gradient of a function restricted to a mesh element $K\in\Tau_{h}$.
The energy norm in $\mathbf{L}^{2}(\Omega)$ is given by 
\begin{equation}\label{L2discretenorm}
 ||\vecv ||^{2}_{\star}\eq \sum_{K\in\Tau_{h}}||\vecv ||^{2}_{\star,K}=\sum_{K\in\Tau_{h}}||\vK^{-\frac{1}{2}} \vecv||^{2}_{K}, \qquad\forall \vecv\in \mathbf{L}^{2}(\Tau_{h}).
\end{equation}
We also recall the Poincar\'{e} inequality: 
\begin{equation}\label{Poincare_ineq}
 \|q-q_{K}\| \leq C_{P,K} h_{K}\|\nabla q \|_{K},\quad \forall q\in H^{1}(K),
\end{equation}
 where $q_K$ is the mean value of the function $q$ on the element $K$ given by $q_K=\int_{K}q\dx/|K|$ and
 $C_{P,K}=1/\pi$ whenever the element $K$ is convex. In what follows,  we denote respectively by $c_{\vK,K}$ and $C_{\vK,K}$  the smallest and the largest eigenvalue of
the tensor $\vK$ in $K \in \Tau_{h}$. 
We  introduce the \textit{local residual estimators}
\bse\begin{align}
  \label{Residual_estimatorr_P}
  \eta^{k,n}_{\textn{R,P},K}
    &\eq C_{P,K}c_{\vK,K}^{-\frac{1}{2}} h_{K} \|g^{n}-\partial_{t}\varphi(\hat{p}^{k,n}_{h},\hat{\vecu}^{k,n}_{h})-\divecv\hat{\vecw}_{h}^{k,n}\|_{K}, \quad K\in\Tau_{h}, \\
  \label{Residual_estimatorr_U}
  \eta^{k,n}_{\textn{R,U},K}
   & \eq C_{P,K} h_{K}\|\divecv\hat{\sigmabo}^{k,n}_{h}+\vf^{n}\|_{K}, \quad K\in\Tau_ {h},
\end{align}\ese
 the \textit {flux estimators}
\bse\label{Disc_estimator_eq}\begin{align}
  \label{Disc_estimator_eq_P}
  \eta^{k,n}_{\textn{F,P},K}(t)
   & \eq ||\hat{\vecw}^{k,n}_{h}+\vK\nabla{\hat{p}^{k}}_{\htau}(t)||_{\star,K}, \quad K\in\Tau_{h},\,\, t\in I_{\textn{f}}^{n},\\
    \label{Disc_estimator_eq_U}
  \eta^{k,n}_{\textn{F,U},K}(t)
   & \eq ||\hat{\sigmabo}^{k,n}_{h}-\sigmabo({\hat{p}^{k}}_{\htau},{\hat{\vecu}^{k}}_{\htau})(t)||_{K}, \quad K\in\Tau_{h},\,\, t\in I_{\textn{f}}^{n},
\end{align}\ese
the  \textit{pressure nonconformity estimators}
\bse    \begin{align}
    \label{Disc_estimator_eq_NC1P}
  &\eta^{n}_{\textn{NC1,P},K}(t)
    \eq \left(\dfrac{c_{0}}{2}\right)^{\frac{1}{2}}\|(\widetilde{p}^{k}_{h}-\hat{p}^{k}_{h})(t)\|_{K}, \quad K\in\Tau_ {h},\quad t\in I_{\textn{f}}^{n},\\
      &\eta^{k,n}_{\textn{NC2,P},K}
     \eq     c_{0}\sqrt{2}\dfrac{h_{K}c_{\vK,K}^{-\frac{1}{2}}}{3\pi} \left\{\|\widetilde{p}_{h}^{k,n}-\hat{p}^{k,n}_{h}\|^{2}_{K}+\|\widetilde{p}_{h}^{k,n-1}-\hat{p}^{k,n-1}_{h}\|^{2}_{K}\right\}^{\frac{1}{2}}, \quad K\in\Tau_ {h},\\
  & \eta^{k}_{\textn{NCF,P},K}
     \eq c_{0}\dfrac{h_{K}c_{\vK,K}^{-\frac{1}{2}}}{2\pi}\|(\widetilde{p}^{k}_{h}-\hat{p}^{k}_{h})(\cdot,T)\|_{K}, \quad K\in\Tau_ {h},
\end{align}
and  the  \textit{displacement nonconformity estimators}
\begin{align}
 \label{Disc_estimator_eq_NC1U}
  &\eta^{n}_{\textn{NC1,U},K}(t)
    \eq \dfrac{1}{2}\left\{2\mu\|\epsb(\widetilde{\vecu}^{k}_{h}-\hat{\vecu}^{k}_{h})(t)\|_{K}^2+\lambda\|\divecv(\widetilde{\vecu}^{k}_{h}-\hat{\vecu}^{k}_{h})(t)\|_{K}^2\right\}^{\frac{1}{2}}, \quad K\in\Tau_ {h},\quad t\in I_{\textn{f}}^{n},\\
 & \eta^{k,n}_{\textn{NC2,U},K}
     \eq  \alpha\sqrt{2}\dfrac{h_{K}c_{\vK,K}^{-\frac{1}{2}}}{3\pi}\left\{\|\divecv(\widetilde{\vecu}^{k,n}_{h}-\hat{\vecu}^{k,n}_{h})\|^{2}_{K}
     +\|\divecv(\widetilde{\vecu}^{k,n-1}_{h}-\hat{\vecu}^{k,n-1}_{h})\|^{2}_{K}\right\}^{\frac{1}{2}}, \quad K\in\Tau_ {h},\\      
  &   \eta^{k}_{\textn{NCF,U},K}
     \eq\alpha\dfrac{h_{K}c_{\vK,K}^{-\frac{1}{2}}}{2\pi}\|\divecv(\widetilde{\vecu}^{k}_{h}-\hat{\vecu}^{k}_{h})(\cdot,T)\|_{K}, \quad K\in\Tau_ {h}.
\end{align}\ese
Therefrom, we introduce the \textit{global versions} by
\bse\label{globa_space_tim_estimators}
\begin{align}
&  \eta^{k,n}_{\textn{J}} \eq \left\{\int_{I_{\textn{f}}^{n}}\sum_{K\in\Tau_{h}} \left(\eta^{k,n}_{\textn{R,J},K}
  + \eta^{k,n}_{\textn{F,J},K}(t) \right)^{2}\dt\right\}^{\frac{1}{2}},
     \quad 1\leq n\leq N_{\textnormal{f}},\quad \textn{J}=\textn{P},\,\textn{U}, \label{local_estim_sum_R_Dis_i}\\
&\eta^{k,n}_{\textn{NC1,J}} \eq \left\{\int_{I_{\textn{f}}^{n}}\sum_{K\in\Tau_{h}} \left(\eta^{k,n}_{\textn{NC1,J},K}(t) \right)^{2}\dt\right\}^{\frac{1}{2}},
     \quad 1\leq n\leq N_{\textnormal{f}},\quad \textn{J}=\textn{P},\,\textn{U},\label{local_estim_sum_R_Dis_iii}\\
     \label{local_estim_sum_R_Dis_iiiii}
  &\eta^{k,n}_{\textn{NC2,J}} \eq \left\{\int_{I_{\textn{f}}^{n}}\sum_{K\in\Tau_{h}} \left(\eta^{k,n}_{\textn{NC2,J},K} \right)^{2}\dt\right\}^{\frac{1}{2}}, \quad 1\leq n\leq N_{\textnormal{f}},\quad \textn{J}=\textn{P},\,\textn{U},\\
    &\eta^{k}_{\textn{NCF,\textn{J}}} \eq \left\{\sum_{K\in\Tau_{h}} \left(\eta^{k}_{\textn{NCF,\textn{J}},K} \right)^{2}\right\}^{\frac{1}{2}},\quad \textn{J}=\textn{P},\,\textn{U}.
    \end{align}\ese
\subsection{Guaranteed reliability}
We now provide a guaranteed estimate on the total error in particular valid on each iteration of Algorithm~\ref{space_time_discrete} or \ref{multi-rate_discrete}.  This result extend the results of our previous work \cite[Section~5]{AHMED2018}, where a computable guaranteed
 bound on the energy-type error between the exact solution and its approximation with an exact
solver has been derived. 
\begin{thm}[Global-in-time a posteriori error estimate]\label{thm:main_theorem}
 Let $(p,\vecu)$ be the weak solution of~\eqref{weak_primal_form}. At an  arbitrary  
 iteration $k\geq 1$ of Algorithm~\ref{space_time_discrete} or \ref{multi-rate_discrete}, let 
 $(\widetilde{p}^{k}_{\htau},\widetilde{\vecu}^{k}_{\htau})$ be the post-processed pressure and displacement of 
 subsection~\ref{sec:post-processing},  $(\hat{p}^{k}_{\htau},\hat{\vecu}^{k}_{\htau})$ be the 
 reconstructed pressure and displacement   and 
 $(\hat{\vecw}^{k}_{\htau},\hat{\sigmabo}^{k}_{\htau})$ be the reconstructed 
 flux and stress of   section~\ref{sec:reconstructions}. Then, there holds
 \begin{equation}
  \label{eq:est}
  \|(p-\widetilde{p}^{k}_{\htau},\vecu-\widetilde{\vecu}^{k}_{\htau})\|_{\textn{en}} \leq \eta^{k}_{\textn{P}}+ \eta^{k}_{\textn{U}}   +  \eta^{k}_{\textn{NC,P}}+ 
  \eta^{k}_{\textn{NC,U}},
\end{equation}
where
\bse\label{global_versions_estimators}\begin{alignat}{2} \nonumber
 &   \eta^{k}_{\textn{J}} \eq {}  \sqrt{\dfrac{L_{\textn{J}}}{2}}\Bigg\{ \sum_{n=1}^{N_{\textnormal{f}}}\left(\eta^{k,n}_{\textn{J}}\right)^2
  +2\sum_{n=1}^{N_{\textnormal{f}}}\tau^{n}\sum_{l=1}^{n}\big(\eta^{k,l}_{\textn{J}}\big)^{2}\\&\qquad\qquad\qquad\quad
 +2\sum_{n=1}^{N_{\textnormal{f}}}\sum_{l=1}^{n}J_{nl}\Big(\sum_{q=1}^{l}\left(\eta^{k,q}_{\textn{J}}\right)^{2}\Big)\Bigg\}^{\frac 1 2},\quad \textn{J}=\textn{P},\,\textn{U},\\
   &\nonumber
    \eta^{k}_{\textn{NC,J}} \eq {}\Bigg\{\sum_{n=1}^{N_{\textnormal{f}}} \{\left(\eta^{k,n}_{\textn{NC1,J}}\right)^2+\left(\eta^{k,n}_{\textn{NC2,J}}\right)^2\} +4\sum_{n=1}^{N_{\textnormal{f}}}\tau^{n}\sum_{l=1}^{n}\big(\eta^{k,l}_{\textn{NC1,J}}\big)^{2} \\
 \label{eq:eta:totP} &\qquad\qquad\qquad\qquad\quad
 +4\sum_{n=1}^{N_{\textnormal{f}}}\sum_{l=1}^{n}J_{nl}\Big(\sum_{q=1}^{l}\left(\eta^{k,q}_{\textn{NC1,J}}\right)^{2}\Big)+\left(\eta^{k}_{\textn{NCF,J}}\right)^{2}\Bigg\}^{\frac 1 2},\quad \textn{J}=\textn{P,\,U}.
\end{alignat}\ese
Notice that we have set $L_{\textn{P}}=1$ and $L_{\textn{U}}=\frac{1}{\mu}$, and for $1\leq n,l\leq N_{\textn{f}}$,
$$
J_{nl} \eq \int_{I_{\textn{f}}^{n}}\int_{I_{\textn{f}}^{l}} e^{t-s}\textn{d}s\textn{d}t.
$$
\end{thm}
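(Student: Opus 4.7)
The plan is to build on \cite[Theorem 6.2]{AHMED2018}, which provides an abstract bound of the energy-type norm on the left of~\eqref{eq:est} in terms of dual norms of the flow and mechanics residuals evaluated at a conforming approximation. Since the approximate solution $(\widetilde{p}^{k}_{\htau},\widetilde{\vecu}^{k}_{\htau})$ produced by the fixed-stress iteration is only elementwise smooth, the natural approach is to insert the $H^1_0$-reconstructions $(\hat{p}^{k}_{\htau},\hat{\vecu}^{k}_{\htau})$ of Section~\ref{sec:reconstructions} and split the error by the triangle inequality,
\begin{equation*}
\|(p-\widetilde{p}^{k}_{\htau},\vecu-\widetilde{\vecu}^{k}_{\htau})\|_{\textn{en}} \leq \|(p-\hat{p}^{k}_{\htau},\vecu-\hat{\vecu}^{k}_{\htau})\|_{\textn{en}} + \|(\hat{p}^{k}_{\htau}-\widetilde{p}^{k}_{\htau},\hat{\vecu}^{k}_{\htau}-\widetilde{\vecu}^{k}_{\htau})\|_{\textn{en}},
\end{equation*}
where the second term will contribute exactly the nonconformity contributions $\eta^{k}_{\textn{NC,P}}$ and $\eta^{k}_{\textn{NC,U}}$ through direct evaluation of each norm appearing in~\eqref{eq:newenergynorm}, together with the Poincaré inequality~\eqref{Poincare_ineq} for the $H^{-1}$-dual-norm component.

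For the first (conforming) piece, I would invoke the dual-norm bound from \cite[Theorem~6.2]{AHMED2018} applied to $(\hat{p}^{k}_{\htau},\hat{\vecu}^{k}_{\htau})\in\mathcal{E}_{T}$. The flow residual acting on $q\in X_{T}$ reduces, after adding and subtracting $\hat{\vecw}^{k,n}_{h}$ and integrating by parts on each $K$, to
\begin{equation*}
\int_{I^{n}_{\textn{f}}}\sum_{K}\left\{(g^n-\partial_t\varphi(\hat{p}^{k,n}_h,\hat{\vecu}^{k,n}_h)-\divecv\hat{\vecw}^{k,n}_h,q-q_K)_{K} - (\hat{\vecw}^{k,n}_h+\vK\nabla\hat{p}^{k}_{\htau},\nabla q)_K\right\}\dt,
\end{equation*}
where the conservation property~\eqref{eq:conservcondition_2_flow} ensures one can subtract the piecewise constant $q_K$, allowing Poincaré~\eqref{Poincare_ineq} to absorb the residual into $\eta^{k,n}_{\textn{R,P},K}$, while Cauchy--Schwarz handles the flux-mismatch term via $\eta^{k,n}_{\textn{F,P},K}$. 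The mechanics residual is treated in parallel: the equilibration~\eqref{eq:conservcondition_2_stress} and the weak-symmetry condition~\eqref{localstressreconst3} let the same argument apply to $\vf^{n}+\divecv\hat{\sigmabo}^{k,n}_h$, producing $\eta^{k,n}_{\textn{R,U},K}$ and $\eta^{k,n}_{\textn{F,U},K}$. A key ingredient here is Lemma~\ref{Lemma:meanvalues}, which guarantees that the $K$-averages of $\partial_t\varphi$ for the reconstructed and post-processed solutions coincide, so the equilibration established at the post-processed level remains valid at the $H^1_0$-conforming level.

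The last ingredient is to turn the pointwise-in-time bounds into the global-in-time estimators~\eqref{global_versions_estimators}. The terms $\tfrac{1}{2}\|(\cdot)\|_{\natural_T}^{2}$ and $\tfrac{1}{2}\|\varphi(\cdot)\|_{X'_T}^{2}$ in~\eqref{eq:newenergynorm} produce the single-$n$ sum $\sum_n(\eta^{k,n})^2$ and, after time integration, the factor $\tau^{n}$ multiplying $\sum_{l\le n}(\eta^{k,l})^2$; the nested integrals with the kernel $e^{t-s}$ give exactly the weights $J_{nl}=\int_{I_{\textn{f}}^n}\int_{I_{\textn{f}}^l} e^{t-s}\,\textnormal{d}s\,\textnormal{d}t$ after swapping integration orders and using the piecewise-constant-in-time structure of the estimators. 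The constants $L_{\textn{P}}=1$ and $L_{\textn{U}}=1/\mu$ appear respectively from the trivial coercivity for the pressure part and from bounding $\|\epsb(\cdot)\|^2\le (1/(2\mu))\|\cdot\|_{\Xi_t}^2$ in the mechanics part.

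The main technical obstacle is the careful bookkeeping of the exponential-kernel integrals: one must verify that the Gronwall-type argument used to derive~\eqref{eq:newenergynorm} in \cite[Theorem 6.2]{AHMED2018} continues to produce the exact weights $J_{nl}$ after the piecewise-in-time residuals are plugged in, and that the coupling of $p$ and $\vecu$ through the Biot constants $\alpha$, $c_r$ redistributes cleanly between the $\textn{P}$ and $\textn{U}$ parts without cross-contamination. A secondary difficulty is ensuring that the nonconformity contributions $\eta^{k}_{\textn{NCF,J}}$ at the final time do not produce uncontrolled terms, which is handled by bounding the final-time $H^{-1}$-norm of $\varphi(p-\hat{p}^{k}_{\htau},\vecu-\hat{\vecu}^{k}_{\htau})(T)$ via the post-processing mean-value identities~\eqref{rec_pressure_mean}--\eqref{rec_displacement_mean} and Poincaré, yielding the prefactor $h_K c_{\vK,K}^{-1/2}/(2\pi)$ in $\eta^{k}_{\textn{NCF,J},K}$.
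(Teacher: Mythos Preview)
Your proposal is correct and follows essentially the same route as the paper: the triangle inequality splitting into a conforming piece bounded via \cite[Theorem~6.2]{AHMED2018} and a nonconformity piece, the residual manipulation by inserting the equilibrated flux/stress and using~\eqref{eq:conservcondition_2_flow}--\eqref{eq:conservcondition_2_stress} together with Poincar\'e and Cauchy--Schwarz, and the time-integration bookkeeping yielding the weights $\tau^n$ and $J_{nl}$. Two small remarks: the constant $L_{\textn{U}}=1/\mu$ arises directly from the prefactors $\tfrac{1}{2\mu}$ and $\tfrac{1}{\mu}$ already present in the abstract bound of \cite[Theorem~6.2]{AHMED2018}, not from an additional coercivity step on $\epsb$; and for the nonconformity term the paper simply invokes \cite[Theorem~5.3 \& Lemma~5.7]{AHMED2018} rather than reproducing the computation, but your sketch of that computation is accurate.
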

\begin{proof}
Recalling~\eqref{eq:newenergynorm}, and~\eqref{initial_data_satisfied}, we have from~\cite[Theorem 6.2]{AHMED2018}, 
for any given couple 
$(\hat{p}_{\htau},\hat{\vecw}_{\htau})\in \mathcal{E}_{T}$,
\begin{alignat}{2}
\nonumber&\|(p-\hat{p}_{\htau},\vecu-\hat{\vecu}_{\htau})\|^{2}_{\textn{en}}&&\leq
\dfrac{1}{2}||\mathcal{R}_{\textn{P}}(\hat{p}_{\htau},\hat{\vecu}_{\htau})||^{2}_{X^{\prime}_{T}}+
 \dfrac{1}{2\mu}||\mathcal{R}_{\textn{U}}(\hat{p}_{\htau},\hat{\vecu}_{\htau})||_{\mathbf{X}_{T}^{\prime}}^{2}\\
\nonumber&& & +\int_{0}^{T}\Bigg(||\mathcal{R}_{\textn{P}}(\hat{p}_{\htau},\hat{\vecu}_{\htau})||^{2}_{X^{\prime}_{t}}+\int_{0}^{t}||\mathcal{R}_{\textn{P}}(\hat{p}_{\htau},\hat{\vecu}_{\htau})||^{2}_{X^{\prime}_{s}}e^{t-s} \textn{d}s\Bigg)\textn{d}t\\
\label{first_estimate}&& & +\dfrac{1}{\mu}\int_{0}^{T}\Bigg(||\mathcal{R}_{\textn{U}}(\hat{p}_{\htau},\hat{\vecu}_{\htau})||^{2}_{\mathbf{X}^{\prime}_{t}}+\int_{0}^{t}||\mathcal{R}_{\textn{U}}(\hat{p}_{\htau},\hat{\vecu}_{\htau})||^{2}_{\mathbf{X}^{\prime}_{s}}e^{t-s} \textn{d}s\Bigg)\textn{d}t; 
\end{alignat}
  featuring the residuals  $\mathcal{R}_{\textn{P}}(\hat{p}_{\htau},\hat{\vecu}_{\htau})\in X^{\prime}_{T}$ and  $\mathcal{R}_{\textn{U}}(\hat{p}_{\htau},\hat{\vecu}_{\htau})\in \mathbf{X}^{\prime}_{T}$ of the weak formulation~\eqref{weak_primal_form}: for all 
$q\in X_{T}$ and $\vecv\in \mathbf{X}_{T}$,
\begin{alignat}{3}\label{residual_P}
 &\langle \mathcal{R}_{\textn{P}}(\hat{p}_{\htau},\hat{\vecu}_{\htau}),q\rangle_{X^{\prime}_{T},X_{T}}:=\int_{0}^{T}(g,q)\dt-\int_{0}^{T}\langle\partial_{t}\varphi(\hat{p}_{\htau},\hat{\vecu}_{\htau}),q\rangle\dt-\int_{0}^{T}(\vK \nabla \hat{p}_{\htau},\nabla q)\dt,\\
 \label{residual_U}&\langle \mathcal{R}_{\textn{U}}(\hat{p}_{\htau},\hat{\vecu}_{\htau}),\vecv\rangle_{\mathbf{X}^{\prime}_{T},\mathbf{X}_{T}}:=\int_{0}^{T}(\vf,\vecv)\dt+\int_{0}^{T}(\vectheta(\hat{\vecu}_{\htau}),\epsb(\vecv))\dt-\alpha\int_{0}^{T}(\hat{p}_{\htau},\divecv\vecv)\dt.
 \end{alignat}
  The dual norms of the residuals are 
  given by
 \begin{alignat}{3}\label{residual_PNORM}
 &||\mathcal{R}_{\textn{P}}(\hat{p}_{\htau},\hat{\vecu}_{\htau})||_{X^{\prime}_{T}}:=\sup_{\underset{||q||_{X_{T}}=1}{ q\in X_{T}}}\langle \mathcal{R}_{\textn{P}}(\hat{p}_{\htau},\hat{\vecu}_{\htau}),q\rangle_{X^{\prime}_{T},X_{T}},\\
 \label{residual_UNORM}&||\mathcal{R}_{\textn{U}}(\hat{p}_{\htau},\hat{\vecu}_{\htau})||_{\mathbf{X}^{\prime}_{T}}:=\sup_{\underset{||\vecv||_{\mathbf{X}_{T}}=1}{ \vecv\in \mathbf{X}_{T}}}\langle \mathcal{R}_{\textn{U}}(\hat{p}_{\htau},\hat{\vecu}_{\htau}),\vecv\rangle_{\mathbf{X}^{\prime}_{T},\mathbf{X}_{T}}.
 \end{alignat} 
 At each iteration $k\geq 1$, the approximate solution  $(\widetilde{p}^{k}_{\htau},\widetilde{\vecu}^{k}_{\htau})$ is not an element of 
 $\mathcal{E}_{T}$, contrarily to   the reconstructed solution, i.e.,    
 $(\hat{p}^{k}_{\htau},\hat{\vecu}^{k}_{\htau})\in \mathcal{E}_{T}$. Thus, to use~\eqref{first_estimate}, we apply   the triangle inequality to get  
   \begin{equation}\label{estimate_triang_ineq}
  ||(p-\widetilde{p}^{k}_{\htau},\vecu-\widetilde{\vecu}^{k}_{\htau})||_{\textn{en}} \leq ||(p-\hat{p}_{\htau}^{k},\vecu-\hat{\vecu}^{k}_{\htau})||_{\textn{en}}  + 
  ||(\hat{p}^{k}_{\htau}-\widetilde{p}^{k}_{\htau},\hat{\vecu}^{k}_{\htau}-\widetilde{\vecu}^{k}_{\htau})||_{\textn{en}},
\end{equation}
where  we can bound the first term of the right-hand side using~\eqref{first_estimate}. What remains is to give  a computable upper bound for the residuals $||\mathcal{R}_{\textn{P}}(\hat{p}^{k}_{\htau},\hat{\vecu}^{k}_{\htau})||_{X^{\prime}_{T}}$ and $||\mathcal{R}_{\textn{U}}(\hat{p}^{k}_{\htau},\hat{\vecu}^{k}_{\htau})||_{\mathbf{X}^{\prime}_{T}}$ together with $||(\hat{p}^{k}_{\htau}-\widetilde{p}^{k}_{\htau},\hat{\vecu}^{k}_{\htau}-\widetilde{\vecu}^{k}_{\htau})||_{\textn{en}}$ and then combine these results.\\

\textbf{1) A computable upper bound for $||\mathcal{R}_{\textn{P}}(\hat{p}^{k}_{\htau},\hat{\vecu}^{k}_{\htau})||_{X^{\prime}_{T}}$ and $||\mathcal{R}_{\textn{U}}(\hat{p}^{k}_{\htau},\hat{\vecu}^{k}_{\htau})||_{\mathbf{X}^{\prime}_{T}}$.} Proceeding as 
in~\cite{ahmed:hal-01687026,RIEDLBECK20171593}, adding  $(\hat{\vecw}^{k,n}_{h},\nabla q)$ 
to~\eqref{residual_P}, choosing $q\in X_{T}$ with $||q||_{X_{T}}=1$
and  applying    the  Green theorem, and using~\eqref{eq:conservcondition_2_flow},  we obtain
\begin{alignat}{2}\label{residual_PP}
 \nonumber&\langle \mathcal{R}_{\textn{P}}(\hat{p}^{k}_{\htau},\hat{\vecu}^{k}_{\htau}),q\rangle_{X^{\prime}_{T},X_{T}}\\ 
 \nonumber&=\sum_{n=1}^{N_{\textn{f}}}\int_{I_{\textn{f}}^{n}}\{\left(g^{n}-\partial_{t}\varphi(\hat{p}^{k,n}_{h},\hat{\vecu}^{k,n}_{h})-\divecv\hat{\vecw}^{k,n}_{h},q\right)-(\hat{\vecw}^{k,n}_{h}+\vK \nabla \hat{p}^{k}_{\htau}),\nabla q)\}\dt,\\
&  =\sum_{n=1}^{N_{\textn{f}}}\int_{I^{n}_{\textn{f}}}\{\left(g^{n}-\partial_{t}\varphi(\hat{p}^{k,n}_{h},\hat{\vecu}^{k,n}_{h})-\divecv\hat{\vecw}^{k,n}_{h},q-q_{K}\right)-(\hat{\vecw}^{k,n}_{h}+\vK \nabla \hat{p}^{k}_{\htau}),\nabla q)\}\dt.
 \end{alignat}
 Then, it can be inferred using \eqref{residual_PNORM} and the Poincar\'{e} inequality~\eqref{Poincare_ineq}
 followed by  Cauchy-Schwarz inequality that
 \begin{alignat}{1}
 \label{boundRP}||\mathcal{R}_{\textn{P}}(\hat{p}^{k}_{\htau},\hat{\vecu}^{k}_{\htau})||_{X^{\prime}_{T}}
 \leq \displaystyle\left\{\sum_{n=1}^{N_{\textn{f}}}\int_{I^{n}_{\textn{f}}}\sum_{K\in\Tau_{h}}\left(\eta^{k,n}_{\textn{R,P},K}+\eta^{k,n}_{\textn{F,P},K}(t)\right)^{2}\dt\right\}^{\frac{1}{2}}.
\end{alignat}
We repeat the same steps for~\eqref{residual_U}, by adding and subtracting $(\hat{\vecsigma}^{k,n}_{\htau},\nabla\vecv)$
(we replace $(\hat{\vecsigma}^{k,n}_{\htau},\epsb(\vecv))  $ by  
$(\hat{\vecsigma}^{k,n}_{\htau},\nabla\vecv)$ due to symmetry), using~\eqref{eq:conservcondition_2_stress}
and applying the Poincar\'{e} inequality~\eqref{Poincare_ineq} together  
with the Cauchy-Schwarz inequality and definition~\eqref{residual_UNORM}, we obtain 
\begin{alignat}{2}
\label{boundRU}||\mathcal{R}_{\textn{U}}(\hat{p}^{k}_{\htau},\hat{\vecu}^{k}_{\htau})||_{\mathbf{X}^{\prime}_{T}}&&\leq \displaystyle\left\{\sum_{n=1}^{N_{\textn{f}}}\int_{I_{\textn{f}}^{n}}\sum_{K\in\Tau_{h}}\left(\eta^{k,n}_{\textn{R,U},K}+\eta^{k,n}_{\textn{F,U},K}(t)\right)^{2}\dt\right\}^{\frac{1}{2}}.
\end{alignat}
Replacing~\eqref{boundRP} and~\eqref{boundRU} in~\eqref{first_estimate},  so we are left to bound the third and fourth terms of the 
right-hand side of~\ref{first_estimate}. Using the fact that 
$||\mathcal{R}_{\textn{U}}(\hat{p}^{k}_{\htau},\hat{\vecu}^{k}_{\htau})||_{\mathbf{X}_{t}^{\prime}}^{2}$ (also $||\mathcal{R}_{\textn{P}}(\hat{p}^{k}_{\htau},\hat{\vecu}^{k}_{\htau})||^{2}_{X^{\prime}_{t}}$)
is a nondecreasing function of the time $t$, we easily obtain from \eqref{boundRP}-\eqref{boundRU},
\bse\begin{alignat}{3}
& \int_{0}^{T}||\mathcal{R}_{\textn{P}}(\hat{p}^{k}_{\htau},\hat{\vecu}^{k}_{\htau})||^{2}_{X^{\prime}_{t}}\textn{d}t
 \leq \sum_{n=1}^{N_{\textn{f}}}\int_{I_{\textn{f}}^{n}}||\mathcal{R}_{\textn{P}}(\hat{p}^{k}_{\htau},\hat{\vecu}^{k}_{\htau})||^{2}_{X^{\prime}_{t^{n}_{\textn{f}}}}\leq 
 \sum_{n=1}^{N_{\textn{f}}}\tau^{n}_{\textn{f}}\left(\eta^{k,n}_{\textn{P}}\right)^{2},\\
&\int_{0}^{T}||\mathcal{R}_{\textn{U}}(\hat{p}^{k}_{\htau},\hat{\vecu}^{k}_{\htau})||^{2}_{\mathbf{X}^{\prime}_{t}}\textn{d}t
 \leq \sum_{n=1}^{N_{\textn{f}}}\int_{I_{\textn{f}}^{n}}||\mathcal{R}_{\textn{U}}(\hat{p}^{k}_{\htau},\hat{\vecu}^{k}_{\htau})||^{2}_{\mathbf{X}^{\prime}_{t_{\textn{f}}^{n}}}\leq 
 \sum_{n=1}^{N_{\textn{f}}}\tau^{n}_{\textn{f}}\left(\eta^{k,n}_{\textn{U}}\right)^{2}.
 \end{alignat}\ese
 In a similar way, we infer
 \begin{alignat}{3}
 \nonumber &\int_{0}^{T}\int_{0}^{t}||\mathcal{R}_{\textn{P}}(\hat{p}^{k}_{\htau},\hat{\vecu}^{k}_{\htau})||^{2}_{X^{\prime}_{s}}e^{t-s}
  \textn{d}s\textn{d}t\leq \sum_{n=1}^{N_{\textn{f}}}\int_{I_{\textn{f}}^{n}} 
  \sum_{l=1}^{n}\int_{I_{\textn{f}}^{l}}
  ||\mathcal{R}_{\textn{P}}(\hat{p}^{k}_{\htau},\hat{\vecu}^{k}_{\htau})||^{2}_{X^{\prime}_{t_{\textn{f}}^{l}}}e^{t-s}\textn{d}s\textn{d}t,\\
 \nonumber&\qquad\qquad\leq \sum_{n=1}^{N_{\textn{f}}}\int_{I_{\textn{f}}^{n}} 
    \sum_{l=1}^{n}\left\{\int_{I_{\textn{f}}^{l}} \sum_{l=1}^{n}\left(\eta^{k,n}_{\textn{P}}\right)^{2}e^{t-s}\textn{d}s\right\},\\
 \label{boundRP_exp}&\qquad\qquad= \sum_{n=1}^{N_{\textn{f}}} \sum_{l=1}^{n}
 \left\{\int_{I_{\textn{f}}^{n}}\int_{I_{\textn{f}}^{l}}e^{t-s}\textn{d}s\textn{d}t\right\}\times
  \left\{\sum_{l=1}^{n}\left(\eta^{k,n}_{\textn{P}}\right)^{2}\right\}=
  \sum_{n=1}^{N_{\textn{f}}} \sum_{l=1}^{n}
 J_{nl}
  \left\{\sum_{q=1}^{l}\left(\eta^{k,q}_{\textn{P}}\right)^{2}\right\},
 \end{alignat}
 and similarly
 \begin{alignat}{3}
\label{boundRU_exp} &\int_{0}^{T}\int_{0}^{t}||\mathcal{R}_{\textn{U}}(\hat{p}^{k}_{\htau},\hat{\vecu}^{k}_{\htau})||^{2}_{\mathbf{X}^{\prime}_{s}}e^{t-s}
  \textn{d}s\textn{d}t\leq
  \sum_{n=1}^{N_{\textn{f}}} \sum_{l=1}^{n}
 J_{nl}
  \left\{\sum_{q=1}^{n}\left(\eta^{k,q}_{\textn{U}}\right)^{2}\right\}.
   \end{alignat}
 We use \eqref{boundRP}--\eqref{boundRU_exp}  in~\eqref{first_estimate}, thus we bound 
the  first term of~\eqref{estimate_triang_ineq}:
\begin{equation}\label{estimate_triang_ineq21}
||(p-\hat{p}_{\htau}^{k},\vecu-\hat{\vecu}^{k}_{\htau})||_{\textn{en}}\leq \eta^{k}_{\textn{P}}+\eta^{k}_{\textn{U}}.
\end{equation}

\textbf{2) A computable upper bound to $||(\hat{p}^{k}_{\htau}-\widetilde{p}^{k}_{\htau},\hat{\vecu}^{k}_{\htau}-\widetilde{\vecu}^{k}_{\htau})||_{\textn{en}}$.} To bound this term presenting the nonconformity estimator, we proceed  as in ~\cite[Theorem~5.3 $\&$ Lemma~5.7]{AHMED2018}, we promptly arrive to
\begin{alignat}{2}
\label{estimate_triang_ineq22}  ||(\hat{p}^{k}_{\htau}-\widetilde{p}^{k}_{\htau},\hat{\vecu}^{k}_{\htau}-\widetilde{\vecu}^{k}_{\htau})||_{\textn{en}}\leq \eta^{k}_{\textn{NC,P}}+\eta^{k}_{\textn{NC,U}}.
\end{alignat}
The estimate~\eqref{eq:est} is obtained  by replacing~\eqref{estimate_triang_ineq21} and~\eqref{estimate_triang_ineq22} 
in~\eqref{estimate_triang_ineq}.
\end{proof}
\subsection{An a posteriori error estimate distinguishing the space,   time  and fixed-stress coupling
errors}\label{sec:ErrorComponents}
Our  goal in this section is to distinguish the different error components. 
Particularly, we  separate the iterative coupling error from the estimated space and time 
errors, which are predefined and used efficiently in Algorithm~\ref{space_time_discrete_adaptive}. 
To this purpose,  at each iteration $k\geq 1$,
we define for all $1\leq n\leq N_{\textn{f}}$, 
the \textit{local spatial, temporal and iterative coupling estimators}
\bse\label{Disc_estimator_eq_sp_tm_cp}\begin{align}
  \label{Disc_estimator_eq_sp_P}
  \eta^{k,n}_{\textn{sp,P},K}
   & \eq \eta^{k,n}_{\textn{R,P},K}+||\vecw^{k,n}_{h}+\vK\nabla{\hat{p}^{k,n}}_{h}||_{\star,K},\\
    \label{Disc_estimator_eq_sp_U}
  \eta^{k,n}_{\textn{sp,U},K}
   & \eq \eta^{k,n}_{\textn{R,U},K}+\|\sigmabo^{k,n}_{h}-\sigmabo({\hat{p}^{k,n}}_{h},{\hat{\vecu}^{k,n}}_{h})\|_{K},\\
   \label{Disc_estimator_eq_tm_P}
   \eta^{k,n}_{\textn{tm,P},K}
   &\eq |||{\hat{p}^{k,n}}_{h}-{\hat{p}^{k,n-1}}_{h}|||_{K},\\
      \label{Disc_estimator_eq_tm_U}
   \eta^{k,n}_{\textn{tm,U},K}
   &\eq\|\vecsigma({\hat{p}}_{h}^{k,n},{\hat{\vecu}^{k,n}}_{h})-\sigmabo({\hat{p}}_{h}^{k,n-1},{\hat{\vecu}^{k,n-1}}_{h})\|_{K},\\
  \label{Disc_estimator_eq_it_P}
   \eta^{k,n}_{\textn{it,P},K}
  &\eq ||\vecw^{k,n}_{h}-\hat{\vecw}^{k,n}_{h}||_{\star,K},\\
  \label{Disc_estimator_eq_it_U}
   \eta^{k,n}_{\textn{it,U},K}
   &\eq \|\sigmabo^{k,n}_{h}-{\hat{\sigmabo}^{k,n}}_{h}\|_{K}.
   \end{align}\ese
   Therefrom, we  introduce  like  in~\eqref{globa_space_tim_estimators}, $\textn{a}=\textn{sp,\,tm,\,it}$,  
 \label{Disc_estimator_eq_SPLI}\begin{alignat}{3}
& \left(\eta^{k,n}_{\textn{a,J}}\right)^{2}\eq \int_{I^{n}_{\textn{f}}}\sum_{K\in\Tau_{h}}\left(\eta^{k,n}_{\textn{a,J},K}\right)^{2}\dt,\quad \textn{J}=\textn{P},\,\textn{U},
\end{alignat} 
and then introduce  
their global versions like in~\eqref{global_versions_estimators} by  
\begin{equation} \label{eq:eta:dist}\begin{split}
   \eta^{k}_{\textn{a,J}} \eq {} & \sqrt{\dfrac{L_{\textn{J}}}{2}}\Bigg(\Bigg\{\sum_{n=1}^{N_{\textn{f}}} \left(\eta^{k,n}_{\textn{a,J}}\right)^2\Bigg\}^{\frac 1 2} +\sqrt{2}\Bigg\{\sum_{n=1}^{N_{\textn{f}}}\tau^{n}_{\textn{f}}\sum_{l=1}^{n}\big(\eta^{k,l}_{\textn{a,J}}\big)^{2}\Bigg\}^{\frac 1 2}\\
  &\qquad+\sqrt{2}\Bigg\{\sum_{n=1}^{N_{\textn{f}}}\sum_{l=1}^{n}J_{nl}\Big(\sum_{q=1}^{l}\left(\eta^{k,q}_{\textn{a,J}}\right)^{2}\Big)\Bigg\}^{\frac 1 2}\Bigg)+\delta_{\textn{a}}\eta^{k}_{\textn{NC,J}} ,\quad \textn{J}=\textn{P},\,\textn{U},
\end{split}\end{equation}
where $\delta_{\textn{a}}=0$ for $\textn{a}=\textn{tm,\,it}$ and $\delta_{\textn{sp}}=1$. 
 \begin{rem}[Fixed-stress estimator]\label{rem:fs_estimator_nc}
 In the above estimators, we have to mention that, for conforming time discretization, the iterative coupling estimators $\eta^{k}_{\textn{it,J}}$, J=U, P, tends to zero when the fixed-stress algorithm converges. However, this is not true for non-conforming time discretization as in the multi-rate algorithm or the adaptive one. Precisely, at each iteration $k\geq 1$, the reconstructed flux and stress are satisfying respectively the local mass conservation~\eqref{localflowreconst} and \eqref{localstressreconst} which is not the case  for the approximate flux   from~\eqref{fstress_iterative_flow_multi-rate} and the approximate stress from~\eqref{fstress_iterative_mech_multi-rate} (step 2.(b)ii in Algorithm~\ref{multi-rate_discrete} and step 2.(b)iii in Algorithm~\ref{space_time_discrete_adaptive}) and this is even when the fixed-stress converges. In other words,
 the fixed-stress estimator $\eta^{k}_{\textn{it}}=\eta^{k}_{\textn{it,P}}+\eta^{k}_{\textn{it,U}}$  becomes a non-conformity in-time estimator when the fixed stress algorithm converges.
 \end{rem}
 
\begin{lem}[A posteriori error estimate distinguishing error components]\label{thm:error_distinguished}
Let the assumptions of Theorem~\ref{thm:main_theorem} be satisfied. Then there holds
\begin{equation}
  \label{eq:est_components}
  \|(p-\widetilde{p}^{k}_{\htau},\vecu-\widetilde{\vecu}^{k}_{\htau})\|_{\textn{en}} \leq 
  \sum_{\textn{J=P,U}}\{\underbrace{\eta^{k}_{\textn{sp,J}}+\eta^{k}_{\textn{tm,J}}}_{\eta^{k}_{\textn{disc,J}}}+\eta^{k}_{\textn{it,J}}\}.
\end{equation}
\end{lem}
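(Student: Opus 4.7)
The plan is to start from the bound of Theorem~\ref{thm:main_theorem} and split the two main estimators $\eta^{k}_{\textn{P}}$ and $\eta^{k}_{\textn{U}}$ into a spatial, a temporal, and an iterative coupling component, while absorbing the nonconformity estimators $\eta^{k}_{\textn{NC,P}}$, $\eta^{k}_{\textn{NC,U}}$ into the spatial components (this is precisely what the indicator $\delta_{\textn{a}}$ in~\eqref{eq:eta:dist} encodes). The residual estimators $\eta^{k,n}_{\textn{R,J},K}$ are already part of $\eta^{k,n}_{\textn{sp,J},K}$ by definition~\eqref{Disc_estimator_eq_sp_P}--\eqref{Disc_estimator_eq_sp_U}, so no work is needed on them; the real task is to decompose the flux and stress estimators $\eta^{k,n}_{\textn{F,P},K}(t)$ and $\eta^{k,n}_{\textn{F,U},K}(t)$ pointwise in time.

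For the pressure, I would add and subtract $\vecw^{k,n}_{h}$ and $\vK\nabla \hat{p}^{k,n}_{h}$ inside the norm to get
\begin{equation*}
 \hat{\vecw}^{k,n}_{h}+\vK\nabla \hat{p}^{k}_{\htau}(t)
 = (\hat{\vecw}^{k,n}_{h}-\vecw^{k,n}_{h}) + (\vecw^{k,n}_{h}+\vK\nabla \hat{p}^{k,n}_{h}) + \vK\nabla(\hat{p}^{k}_{\htau}(t)-\hat{p}^{k,n}_{h}),
\end{equation*}
and apply the triangle inequality in $\|\cdot\|_{\star,K}$. The three terms reproduce $\eta^{k,n}_{\textn{it,P},K}$, the nonresidual part of $\eta^{k,n}_{\textn{sp,P},K}$, and, using that $\hat{p}^{k}_{\htau}$ is piecewise affine in time on $I_{\textn{f}}^{n}$, a term bounded by $|||\hat{p}^{k,n}_{h}-\hat{p}^{k,n-1}_{h}|||_{K} = \eta^{k,n}_{\textn{tm,P},K}$ for all $t\in I_{\textn{f}}^{n}$ (the linear interpolation factor is bounded by $1$). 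The same trick applied to $\hat{\sigmabo}^{k,n}_{h}-\sigmabo(\hat{p}^{k}_{\htau},\hat{\vecu}^{k}_{\htau})(t)$, by inserting $\sigmabo^{k,n}_{h}$ and $\sigmabo(\hat{p}^{k,n}_{h},\hat{\vecu}^{k,n}_{h})$, splits $\eta^{k,n}_{\textn{F,U},K}(t)$ into the three analogous displacement contributions $\eta^{k,n}_{\textn{it,U},K}$, $\eta^{k,n}_{\textn{sp,U},K}-\eta^{k,n}_{\textn{R,U},K}$ and $\eta^{k,n}_{\textn{tm,U},K}$.

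It then remains to propagate this local, time-pointwise splitting through the nested time sums appearing in~\eqref{global_versions_estimators}. I would apply the Minkowski inequality in $\ell^{2}$ with respect to each layer of summation: first inside $\eta^{k,n}_{\textn{J}}$ given by~\eqref{local_estim_sum_R_Dis_i}, which becomes bounded by $\eta^{k,n}_{\textn{sp,J}}+\eta^{k,n}_{\textn{tm,J}}+\eta^{k,n}_{\textn{it,J}}$ minus the nonconformity correction, and then at the level of the three outer sums in the definition of $\eta^{k}_{\textn{J}}$ (the single sum, the sum weighted by $\tau^{n}_{\textn{f}}$, and the double sum weighted by $J_{nl}$). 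At each of these three levels Minkowski yields a clean split into three pieces with exactly the structure of~\eqref{eq:eta:dist}. The two nonconformity terms $\eta^{k}_{\textn{NC,P}}+\eta^{k}_{\textn{NC,U}}$ left over from Theorem~\ref{thm:main_theorem} are attached, by convention, to the spatial estimators via $\delta_{\textn{sp}}=1$, $\delta_{\textn{tm}}=\delta_{\textn{it}}=0$; summing over $\textn{J}=\textn{P,\,U}$ yields~\eqref{eq:est_components}.

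The only delicate part is the bookkeeping of the Minkowski inequalities at each layer, especially making sure that the prefactors $\sqrt{L_{\textn{J}}/2}$, $\sqrt{2}$, and the weights $\tau^{n}_{\textn{f}}$ and $J_{nl}$ pass through the splitting without generating constants larger than those present in~\eqref{eq:eta:dist}. Because all splittings occur inside $\ell^{2}$-norms and the weights are non-negative, the triangle inequality applies verbatim, so no new cross terms appear and the bound is additive as claimed.
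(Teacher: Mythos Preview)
Your proposal is correct and follows essentially the same route as the paper's proof: the key step in both is the triangle-inequality splitting of $\eta^{k,n}_{\textn{F,P},K}(t)$ and $\eta^{k,n}_{\textn{F,U},K}(t)$ by inserting $\vecw^{k,n}_{h}$, $\vK\nabla\hat{p}^{k,n}_{h}$ (resp.\ $\sigmabo^{k,n}_{h}$, $\sigmabo(\hat{p}^{k,n}_{h},\hat{\vecu}^{k,n}_{h})$) to obtain the iterative, spatial, and temporal pieces, followed by substitution into~\eqref{eq:est}. Your treatment is in fact more explicit than the paper's about propagating the split through the nested sums via Minkowski, which the paper leaves implicit.
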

\begin{proof} We  add and subtract the discrete fluxes in the flux estimator~\eqref{Disc_estimator_eq_P} then apply the triangle inequality yield, for all $K\in\Tau_ {h},\,\, t\in I_{\textn{f}}^{n}$, 
\begin{align}
  \label{Disc_estimator_eq_P_split1}  &\eta^{k,n}_{\textn{F,P},K}(t)\leq ||\hat{\vecw}^{k,n}_{h}+\vK\nabla{\hat{p}^{k,n}}_{h}||_{\star,K}+
  ||\vK(\nabla{\hat{p}}^{k}_{\htau}(t)-\nabla{\hat{p}^{k,n}}_{h})||_{\star,K}.
\end{align}
Another triangle inequality leads to
\begin{align} \label{Disc_estimator_eq_P_split2} 
&\eta^{k,n}_{\textn{F,P},K}(t)\leq 
 ||\vecw^{k,n}_{h}-\hat{\vecw}^{k,n}_{h}||_{\star,K}+||\vecw^{k,n}_{h}+\nabla{\hat{p}^{k,n}}_{h}||_{\star,K}+
  ||\vK(\nabla{\hat{p}}^{k}_{\htau}(t)-\nabla{\hat{p}^{k,n}}_{h})||_{\star,K}.
\end{align}
In the same way, we obtain for the stress  estimator~\eqref{Disc_estimator_eq_U}, for all $K\in\Tau_ {h},\,\, t\in I_{\textn{f}}^{n}$, 
\begin{align}
     \label{Disc_estimator_eq_U_split}
   &\eta^{k,n}_{\textn{F,U},K}(t)\leq\|\sigmabo^{k,n}_{h}-{\hat{\sigmabo}^{k,n}}_{h}\|_{K}+\|\sigmabo^{k,n}_{h}-\sigmabo({\hat{p}^{k,n}}_{h},{\hat{\vecu}^{k,n}}_{h})||_{K}+||\sigmabo({\hat{p}^{k}}_{\htau},{\hat{\vecu}^{k}}_{\htau})(t)-\sigmabo({\hat{p}^{k,n}}_{h},{\hat{\vecu}^{k,n}}_{h})||_{K}.
\end{align}
In these two inequalities, the first terms in the right-hand side form the  contributions of the pressure and displacement in the fixed-stress error, whereas the second  ones  contribute  to  the two components of the space  discretization error. The last  terms  can be  integrated in time, yielding to the two components of the time  error. What  remains is to   replace~\eqref{Disc_estimator_eq_P_split2} and \eqref{Disc_estimator_eq_U_split} 
in~\eqref{eq:est} with the  use  of~\eqref{Disc_estimator_eq_sp_tm_cp}--\eqref{eq:eta:dist}, where we used
the equality of norms in~\eqref{Disc_estimator_eq_P_split2}, i.e.,  $||\vK \nabla v||_{\star,K}=||\vK^{-\frac{1}{2}}(\vK \nabla v)||_{K}=|||v|||_{K}$,  leading to the estimate~\eqref{eq:est_components}.
\end{proof}

\section{Numerical results} 
\label{sec:NumericalResults} In this section we illustrate the efficiency of
our theoretical results on numerical experiments.
We have chosen two examples designed to show how the adaptive fixed-stress scheme 
 behaves vs the standard ones and this is done on  different physical and geometrical situations.
 
\subsection{Test problem 1: an academic example
with a manufactured solution}
We consider in the computational domain $\Omega=[0,1]^{2}$ and 
the final time $T=1$. The analytical solution of Biot's consolidation problem is supposed to be:
\begin{alignat*}{2}\label{first_exactsolution}
 &p(\vecx,t)\eq tx(1-x)y(1-y),\\
 &\vecw(\vecx,t)\eq-\vK\nabla p,\\
 &u_{1}(\vecx,t)=u_{2}(\vecx,t)\eq tx(1-x)y(1-y),\\
 &\sigmabo(\vecx,t)\eq\vectheta(\vecu)-\alpha p\II.
\end{alignat*}
The effective parameters are $\vK=\II$, and $\alpha=\mu=\lambda=1$. This analytical solution which has homogeneous initial and Dirichlet boundary values for $p$ and $\vecu$,  generates from~\eqref{Main_problem_model_mech}-~\eqref{Main_problem_model_cons} non-zeros  source terms $\vf(\vecx,t)$ 
and $g(\vecx,t)$.
\subsubsection{Stopping criteria balancing the error components}\label{subsection:stopcriter}
The aim here is to illustrate 
the performance of the adaptive  stopping criteria introduced in Section~\ref{sec:balanc_stopping}. {To this purpose, we consider a uniform space-time mesh  with  $h=1/16$, and  $\tau_{\textn{f}}^{n}=\tau_{\textn{m}}^{n}=(2h)^2$.} The tuning  parameter is chosen  $\beta=\dfrac{\alpha^2}{\delta(\frac{2\mu}{d}+\lambda)}$, with $\delta=2$. The choice of the parameter $\delta$  is   theoretically calculated in~\cite{BAUSE2017745,storvik2018optimization,Mikelic2014} and possibly should  lead to  the best performance of the   fixed-stress method in terms of number of iterations.  We first test the performance of the space--time fixed stress algorithm (Algo.~\ref{space_time_discrete}) equipped with an adaptive stopping criteria. Therein, we set $\gamma_{\textn{it}}=0.2$ and    compare the results with the standard approach
in which the fixed-stress algorithm is continued until the algebraic residual-based criteria~\eqref{class_stp_critera1} is satisfied for an  (arbitrary) threshold $\epsilon$.
 \begin{figure}[hbt]
    \centering
            \begin{subfigure}[b]{0.475\textwidth}
    \includegraphics[width=\textwidth]{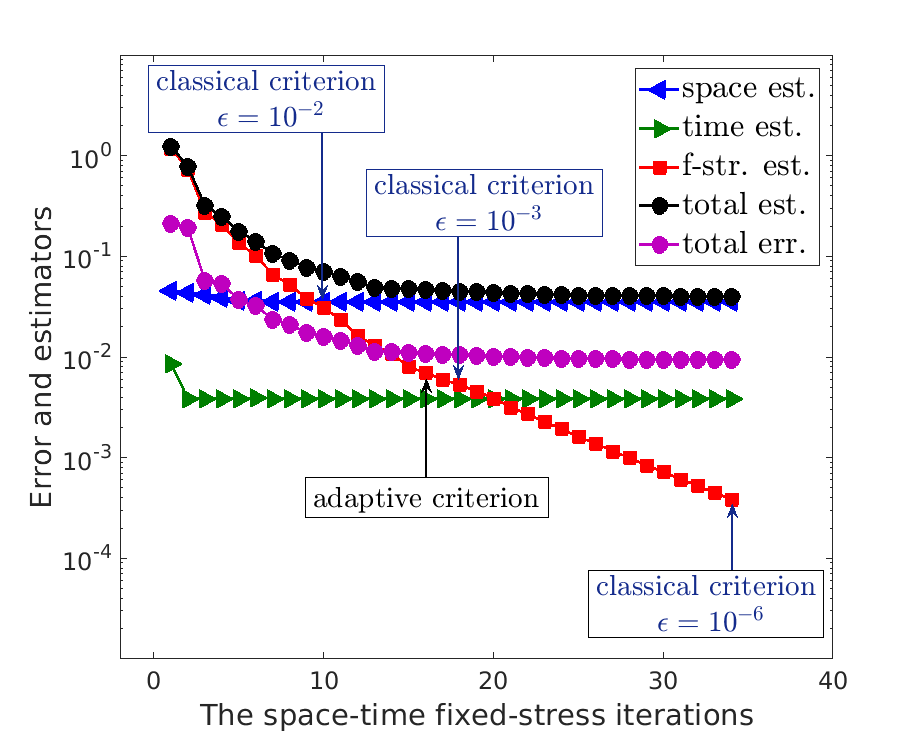}
    \caption{Error and estimators as a function of FS iterations}
        \label{fig:error_and_estimators}
    \end{subfigure}
    \begin{subfigure}[b]{0.475\textwidth}
    \includegraphics[width=\textwidth]{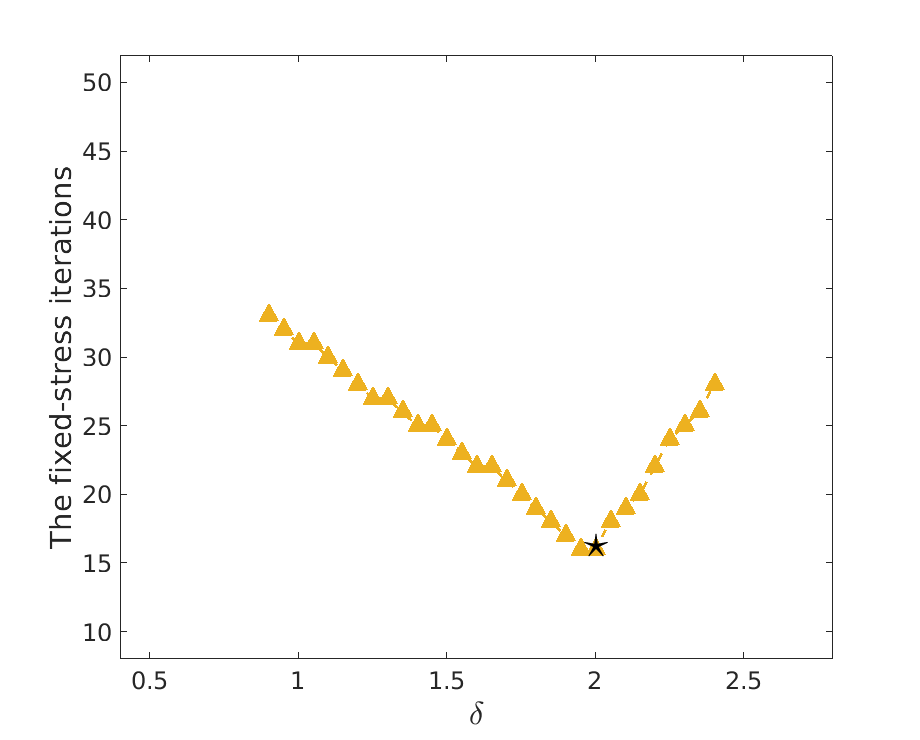}
    \caption{Number of FS iterations as a function of  $\delta$}
    \label{fig:robustness_vs_gamma}
    \end{subfigure}
    \caption{Number of fixed-stress iterations.}%
    \label{fig:stopping_criteria_space_time}
\end{figure}

Figure~\ref{fig:error_and_estimators}  displays the dependence of the total error and of the various estimators on the fixed-stress iterations, where   various stopping criteria are used.  We can see that the space (blue) and time (green) estimators remain constant during the computation in contrast to the fixed-stress estimator, which  gives a numerical indication that the error components are distinguished. For the fixed-stress estimator (red), we can see that  the adaptive stopping criteria~\eqref{Criteria_space_time} is satisfied after 16 iterations only, while the classical one~\eqref{class_stp_critera1} needs 34 iterations with $\epsilon=10^{-6}$, 18 iterations with $\epsilon=10^{-3}$, and only 10 iterations with $\epsilon=10^{-2}$. We can remark that the total error (magenta) and the total estimator (black) decrease rapidly for the first 12 fixed-stress iterations, after which  they decrease very slowly, as the influence of the fixed-stress iteration error
becomes negligible. This is exactly  the point where our adaptive fixed-stress method stops. This   results  in a significant saving of fixed-stress  iterations as well as excludes possible inaccurate solutions from the algorithm (like with $\epsilon=10^{-2})$.)  As an example, we make  a gain of $53\%$ of total fixed-stress iterations compared to the classical algorithm with $\epsilon=10^{-6}$.   
\begin{figure}[hbt]
    \centering
    \begin{subfigure}[b]{0.475\textwidth}
    \includegraphics[trim=0cm 2cm 0cm 5cm,clip=true,width=\textwidth]{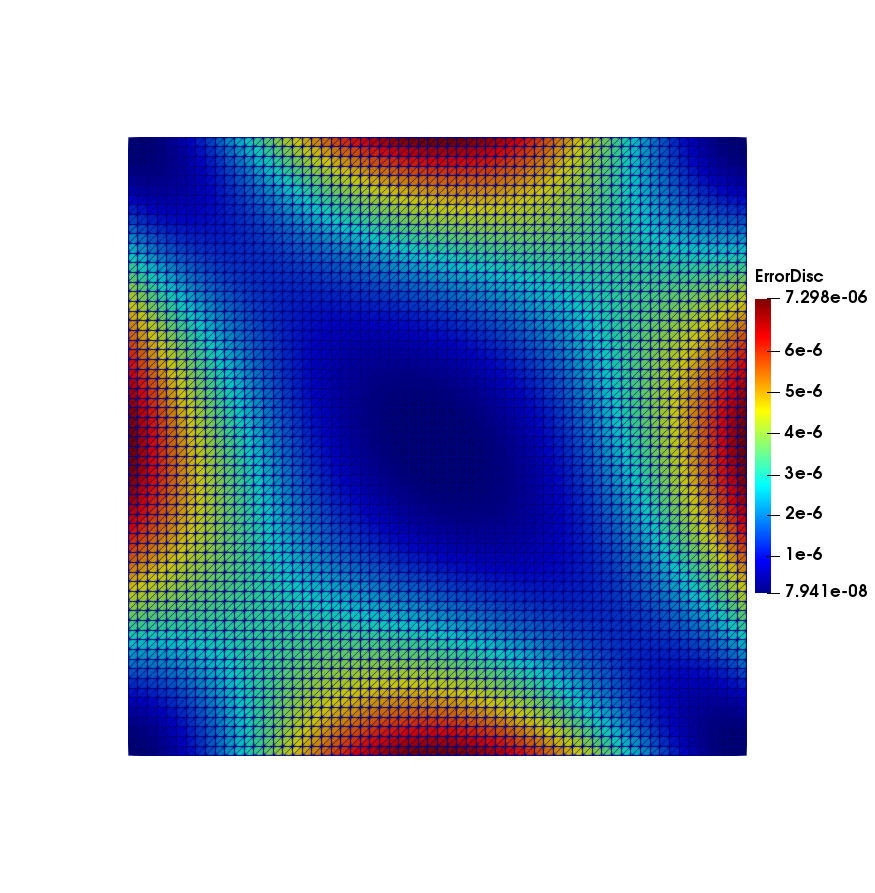}
    \caption{Discretization error}
        \label{fig:discretization_error}
    \end{subfigure}
    \begin{subfigure}[b]{0.475\textwidth}
    \includegraphics[trim=0cm 2cm 0cm 5cm,clip=true,width=\textwidth]{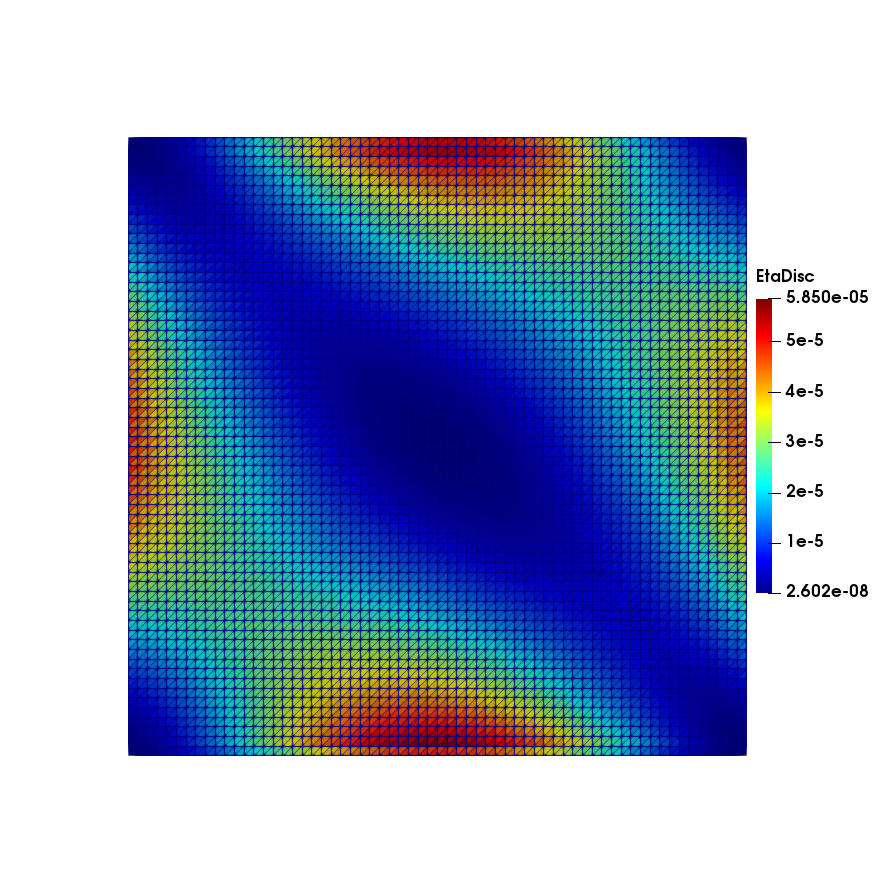}
        \caption{Discretization estimator}
      \label{fig:discretization_estimator}
    \end{subfigure}
    \begin{subfigure}[b]{0.475\textwidth}
    \includegraphics[trim=0cm 2cm 0cm 2cm,clip=true,width=\textwidth]{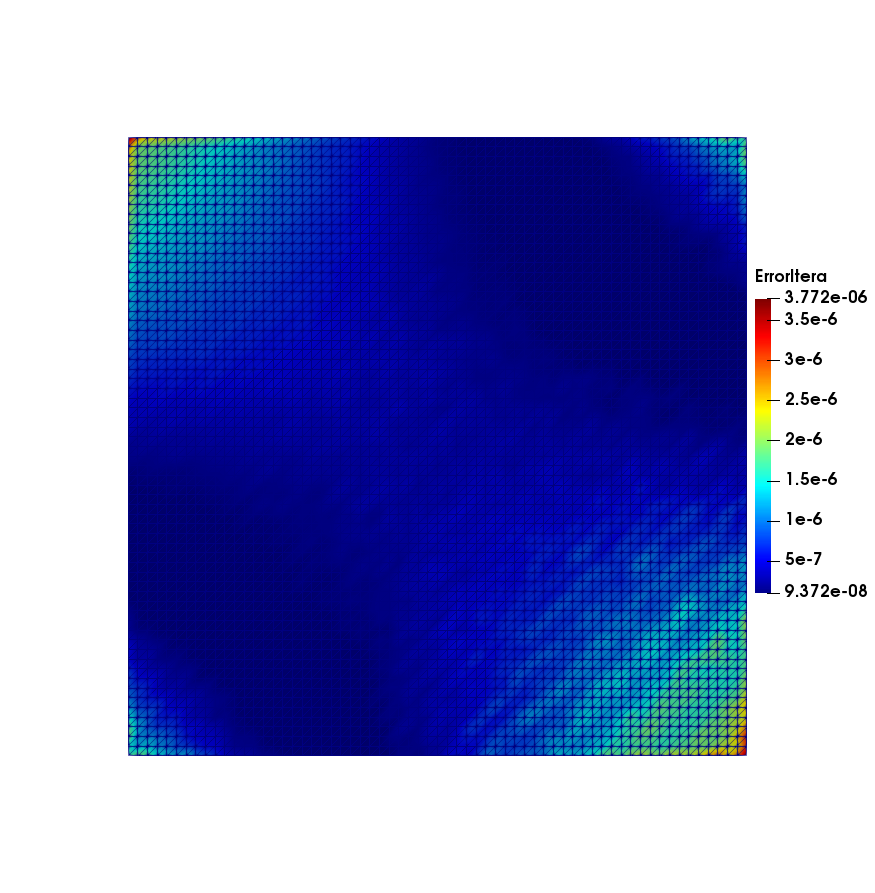}
    \caption{Fixed-stress error}
    \label{fig:fixedstress_error}
    \end{subfigure}
        \begin{subfigure}[b]{0.475\textwidth}
    \includegraphics[trim=0cm 2cm 0cm 2cm,clip=true,width=\textwidth]{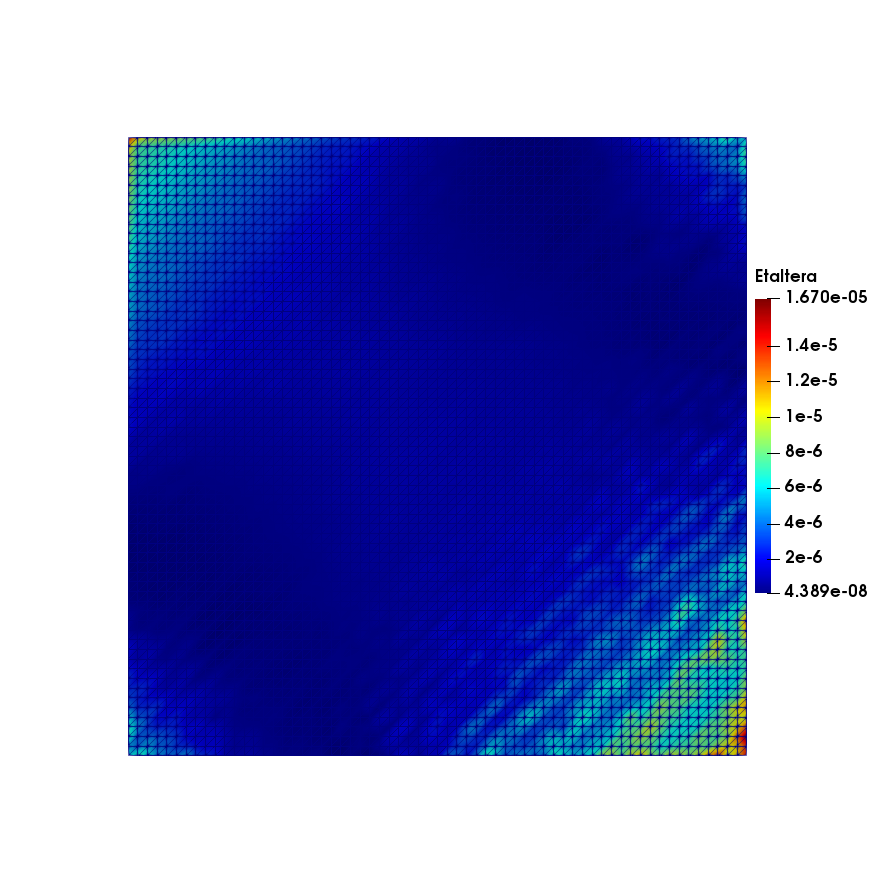}
    \caption{Fixed-stress estimator}
    \label{fig:fixedstress_estimator}
    \end{subfigure}
    \caption{Spatial distributions of the discretization and fixed-stress errors and of the corresponding estimates at $t=T$.}%
    \label{fig:results_case3}
\end{figure}

As known, the number of iterations to achieve convergence for the fixed-stress can differ considerably depending on the choice of the tuning parameter $\delta$. Thus, in Figure~\ref{fig:robustness_vs_gamma}, we plot the number of iterations required by the fixed-stress algorithm as function of the parameter $\delta$. Therein, we stop the  algorithm when the adaptive stopping criteria is satisfied. Clearly, the estimator behaves very similarly to what is usually observed for the fixed-stress error (see, e.g., \cite{storvik2018optimization}). Moreover, the theoretical parameter (marked by a star) coincides with   the numerically optimal value.  In Figure~\ref{fig:results_case3}, we display the  spatial distribution of the different error components (left) and of the corresponding estimators (right) at the final time $t=T$. Clearly, the distribution of the estimated errors reflects the exact ones. Also as expected, we  observe in~Figure~\ref{fig:fixedstress_estimator} that the estimated fixed-stress error is sufficiently small  to not contribute significantly in the overall error.


In Figure~\ref{fig:Effectivity index}, the effectivity index for the space--time fixed stress approach is presented. It is calculated by the ratio between the total estimator and the exact total  error at the iteration $k$ of the fixed-stress algorithm. The effectivity index oscillates during the first 10 iterations, then decreases to approximately 4.27 and then remains constant until the end of the computation. One of the reasons of why this factor  is far from 1, may be  that  the  negative norms involved in the  exact error $\|(p-\tilde{p}^{k}_{\htau},\vecu-\tilde{\vecu}^{k}_{\htau})\|_{\textn{en}}$ are not calculated. Another explanation is that  in practice, we use estimate~\eqref{eq:est_components} instead of~\eqref{eq:est} where the different error components are not yet separated.

 \begin{figure}[hbt]
    \centering
    \includegraphics[width=0.475\textwidth]{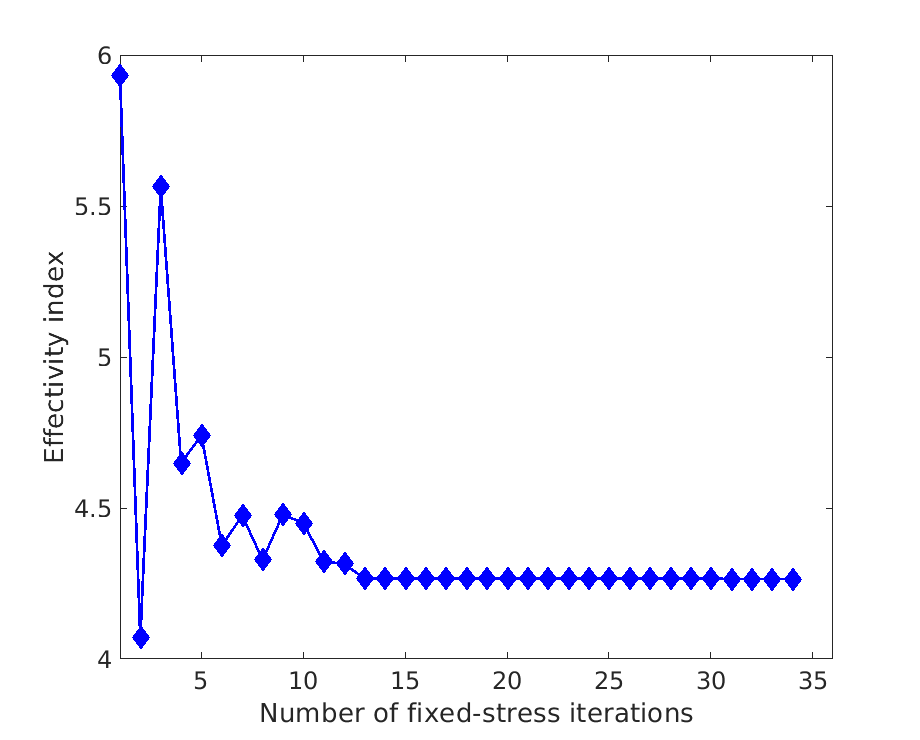}
    \caption{Effectivity index}
      \label{fig:Effectivity index}
\end{figure}

 In the second set of experiments, we study the performance of the adaptive stopping criteria on  the multi-rate fixed-stress algorithm (MFS). Here, we compare the results with the classical multi-rate algorithm in which the
 algebraic residual-based  criteria~\eqref{class_stp_critera2} is used with various threshold $\epsilon$.

\begin{figure}[hbt]
    \centering
    \begin{subfigure}[b]{0.475\textwidth}
    \includegraphics[width=\textwidth]{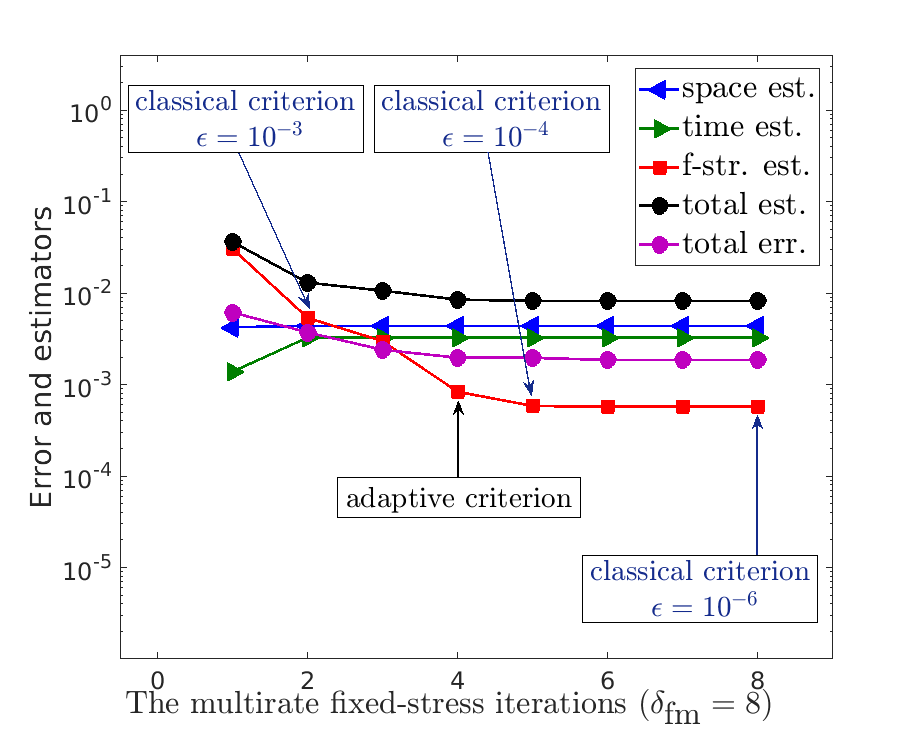}
       \caption{ Asynchronous time steps with  $\delta_{\textn{fm}}=8$}
           \label{fig:multi-rate_L_8}
    \end{subfigure}
        \begin{subfigure}[b]{0.475\textwidth}
    \includegraphics[width=\textwidth]{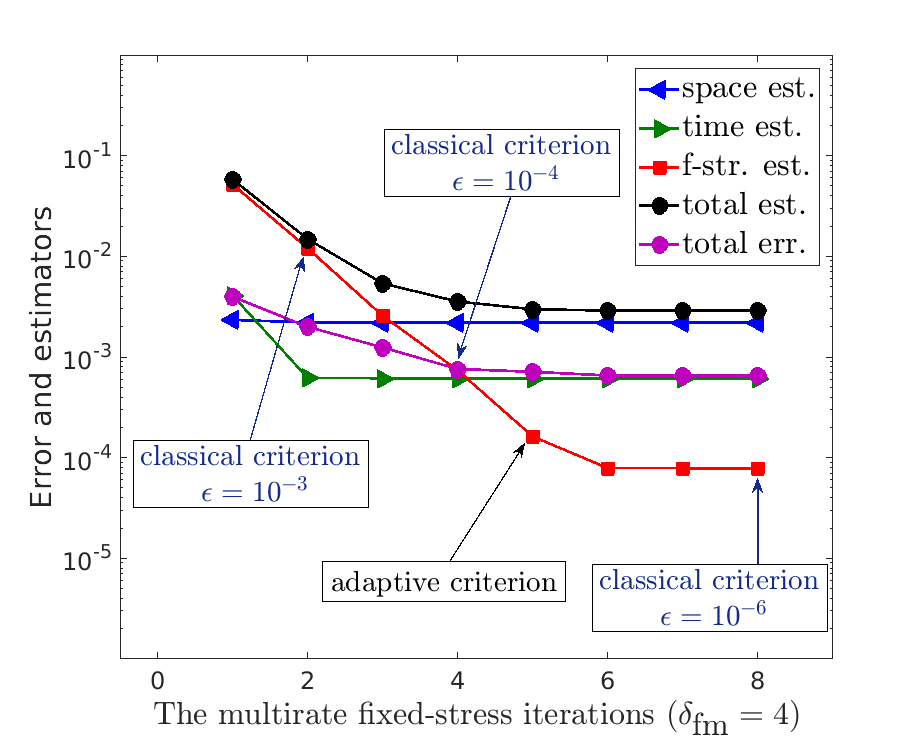}
      \caption{ Asynchronous time steps with $\delta_{\textn{fm}}=4$}
           \label{fig:multi-rate_L_4}
    \end{subfigure}
    \begin{subfigure}[b]{0.475\textwidth}
\includegraphics[width=\textwidth]{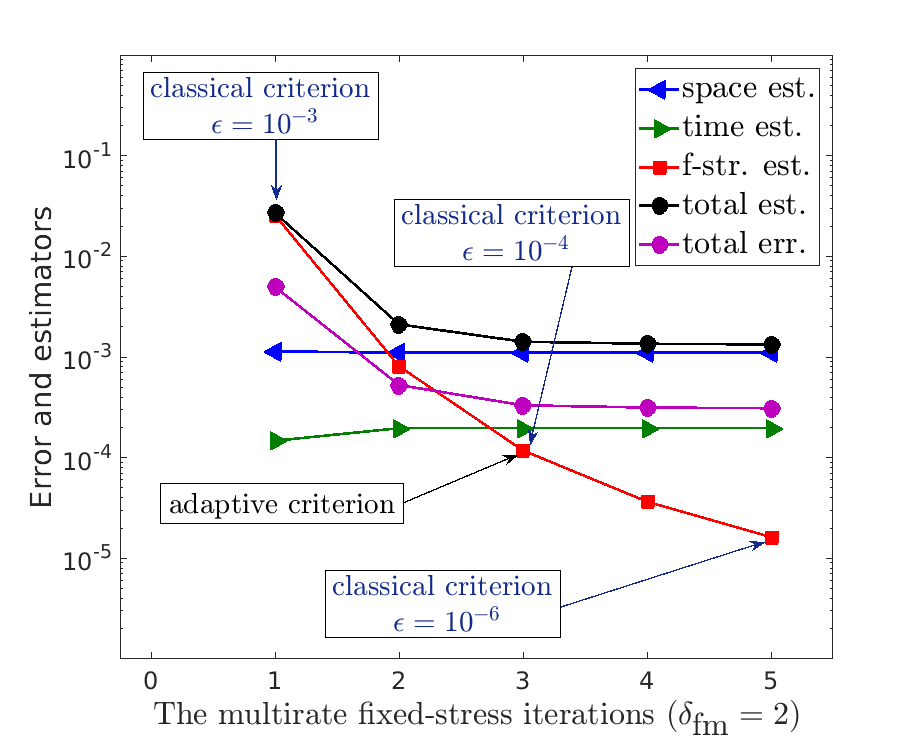}         
\caption{Asynchronous time steps with  $\delta_{\textn{fm}}=2$}
\label{fig:multi-rate_L_1}
\end{subfigure}
            \begin{subfigure}[b]{0.475\textwidth}
       \includegraphics[width=\textwidth]{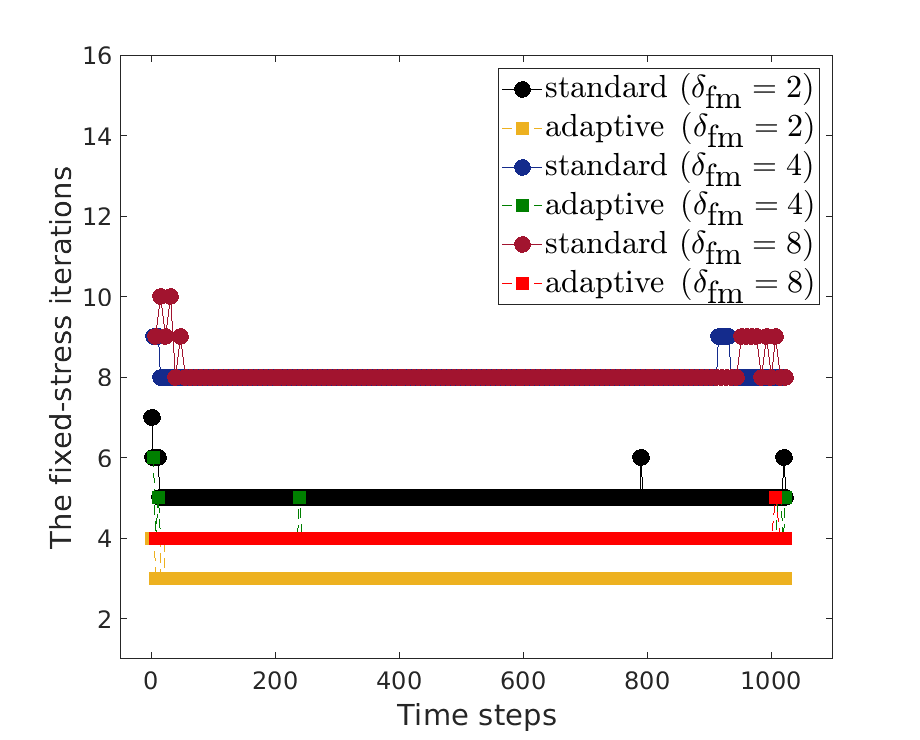}
        \caption{ Number of  iterations as a function of time.}
           \label{fig:multi-rate_cumulative}
    \end{subfigure}
    \caption{The standard and  adpative multi-rate fixed-stress  for various ratios
of discretization in time $\delta_{\textn{fm}}=8,\,4,\,2$.}%
    \label{fig:multi-rate}
\end{figure}
 
In Figures~\ref{fig:multi-rate_L_8}--\ref{fig:multi-rate_L_1}, we plot the evolution of the total error and the various estimators on the fixed-stress iterations for the final coarse mechanics step. There, we compare the adaptive to the standard multi-rate algorithm for various ratios of discretization in time,   $\delta_{\textn{fm}}=8,\,4,\,2$.  We  remark that (i) the discretization in space estimator (blue) as well as the discretization in time  estimator (green) are  approximately constant in each case  (ii)  the discretization in time estimator goes  up and approaching from   the discretization in space estimator when we increase the ratio   $\delta_{\textn{fm}}$. Note that the discretization in space estimator  is the same for the different ratios  if it is scaled with the time steps. These findings confirm numerically that we have practically distinguished the time discretization error from the spatial discretization error.  Concerning the fixed-stress estimator, we recall that in that case,  $\eta^{k,\ell}_{\textn{it}}$ mixes  fixed-stress  and nonconformity-in-time errors (see Remark~\ref{rem:fs_estimator_nc}). 
Thus,  we observe for the case $\delta_{\textn{fm}}=4$, and $\delta_{\textn{fm}}=8$ that, $\eta^{k,\ell}_{\textn{it}}$  dominates    the total error until  iteration 2 or 3, then becomes smaller than $\eta^{k,\ell}_{\textn{sp}}$ and $\eta^{k,\ell}_{\textn{tm}}$  until iteration 4 or 5, and therefrom  remains constant as the influence of the fixed-stress iteration error becomes negligible compared to the nonconformity-in-time error. For $ \delta_{\textn{fm}} = 2 $, the nonconformity error is small enough so as  not to contribute in $\eta^{k, \ell} _{\textn {it}} $ until convergence. For the cases, $\delta_{\textn{fm}}=4$ and $\delta_{\textn{fm}}=8$, we can see that  the classical multi-rate fixed-stress  equipped with \eqref{class_stp_critera2} as stopping criteria needs in the last coarse mechanics step   8 iterations to converge,   and   between 8 and 9 iterations for the  previous  ones. For $\delta_{\textn{fm}}=2$, the classical algorithm needs 5 iterations to converge, and between 5 and 7 for the previous ones.  For all the cases, the adaptive stopping criterion guarantees  that the fixed-stress algorithm converged to the correct solution
 compared to the classical criteria (see the results with  $\epsilon=10^{-3}$), while saving a substantial amount of computational effort (see the results with $\epsilon=10^{-6}$ and $10^{-4}$); see the  overall performance in the three cases depicted in Figure~\ref{fig:multi-rate_cumulative}  comparing the standard approaches with $\epsilon=10^{-6}$ to the adaptive ones.

\subsubsection{Adaptive time-stepping balancing the space and time errors}
In the second part of this test case, we verify   the impact of the  balancing criteria~\eqref{space_time_balance}-\eqref{flow__mech_time_balance} on the fixed-stress schemes. The balancing criteria~\eqref{space_time_balance} aims adapting the times steps for the flow and mechanics subsystems in such a way that   their spatial  and temporal error estimators \eqref{Disc_estimator_eq_SPLI} are equilibrated through the computation. This leads practically to  having $\eta^{n}_{\textn{sp}}\approx\eta^{n}_{\textn{tm}}$. That of the criteria~\eqref{flow__mech_time_balance} equilibrates the time errors from the mechanics and flow discretizations, i.e., $\eta^{n}_{\textn{tm,P}}\approx\eta^{n}_{\textn{tm,U}}$. Next, we will see that using the balancing~\eqref{space_time_balance} or~\eqref{flow__mech_time_balance} is   important  for the efficiency of the adaptive algorithm.

To this aim,  we compare on  three levels of uniform space-time mesh refinement, the standard space-time  (Algo.~\ref{space_time_discrete} ) and  the single- and multi-rate (Algo.~\ref{multi-rate_discrete}) with $\delta_{\textn{fm}}=8,\,4$  algorithms with the adaptive fixed-stress one~(Algo.~\ref{space_time_discrete_adaptive}). For the three refinement levels, we use the same  weights  $\gamma_{\textn{tm,J}}=0.8$ and $\Gamma_{\textn{tm,J}}=1.2$, $\textn{J=P,\,U}$. In Figure~\ref{fig:results_case4} (top), the ratio of the time discretization error over the space discretization error  from the flow (left) and from the mechanics (right)  as a function  of the total number of space--time unknowns is depicted for the aforementioned standard and adaptive algorithms. These results confirm numerically that we have  distinguished  the pressure and displacement errors as well as their time and space discretizations. Precisely, we can easily see that for the multi-rate schemes, the ratio $\frac{\eta_{\textn{tm,P}}}{\eta^{n}_{\textn{sp,P}}}$ (Figure~\ref{fig:results_case4} (top left)),  remains constant when changing the ratio $\delta_{\textn{fm}}$, in contrast to the ratio 
$\frac{\eta_{\textn{tm,U}}}{\eta^{n}_{\textn{sp,U}}}$ (Figure~\ref{fig:results_case4} (top right)), where the ratio increases with $\delta_{\textn{fm}}$. The effect of the resulting ratios on the overall estimate is shown in Figure~\ref{fig:results_case4} (bottom left). These results make it
evident  that the performance of the fixed-stress algorithms   is considerably improved  if they are equipped with the balancing criteria~\eqref{space_time_balance} (balanced1) or~\eqref{flow__mech_time_balance} (balanced2). Particularly,  the standard multi-rate algorithm reduces  the computational cost of the single-rate one, but  still much more expensive than the adaptive ones. In average, the adaptive one reduces the computational cost of the multi-rate one with $58\%$ while     the efficiency of the algorithm in terms of precision is much more preserved. In Figure~\ref{fig:results_case4} (bottom right), we chose the  third refinement level, then we plot the dependence of the total and  fixed-stress estimators as a function of  the fixed-stress iterations for the adaptive algorithms.  This result
confirms that  the algorithm is improved if it is equipped with the  balancing criteria \eqref{space_time_balance}  or~\eqref{flow__mech_time_balance}. Precisely, these balancing ensure that
the contribution of the fixed-stress estimator in the overall error becomes quickly  negligible (see Figure~\ref{fig:error_and_estimators} for the case without adaptivity), thus 
we can  stop the fixed-stress iterations by setting   $\gamma_{\textn{it}}=0.5$. Furthemore, with either of these balancing criteria we have   keeping a small  non-conformity in-time error which makes the application of adaptive stopping criteria more comfortable and guaranteed. Note that in the  standard  algorithms  (as shown in the results of subsection~\ref{subsection:stopcriter}), the time steps and the ratio $\delta_{\textn{fm}}$ are  mainly based on intuition and this may induce  an over-refinement in-time and may increase  the nonconformity-in-time errors, affecting considerably  the efficiency of the fixed-stress algorithm.  In Figure~\ref{fig:results_caseadapt}, we plot the pressure and displacement estimators as a function of the adaptive  time steps. Note that if the developed algorithm is equipped with asynchronous adaptivity  in space, we can significantly reduce the total computational cost, but also the total error, as  this later is dominated by the space error from the discretization of the flow subsystem. 


\begin{figure}[hbt]
    \centering
        \includegraphics[width=0.475\textwidth]{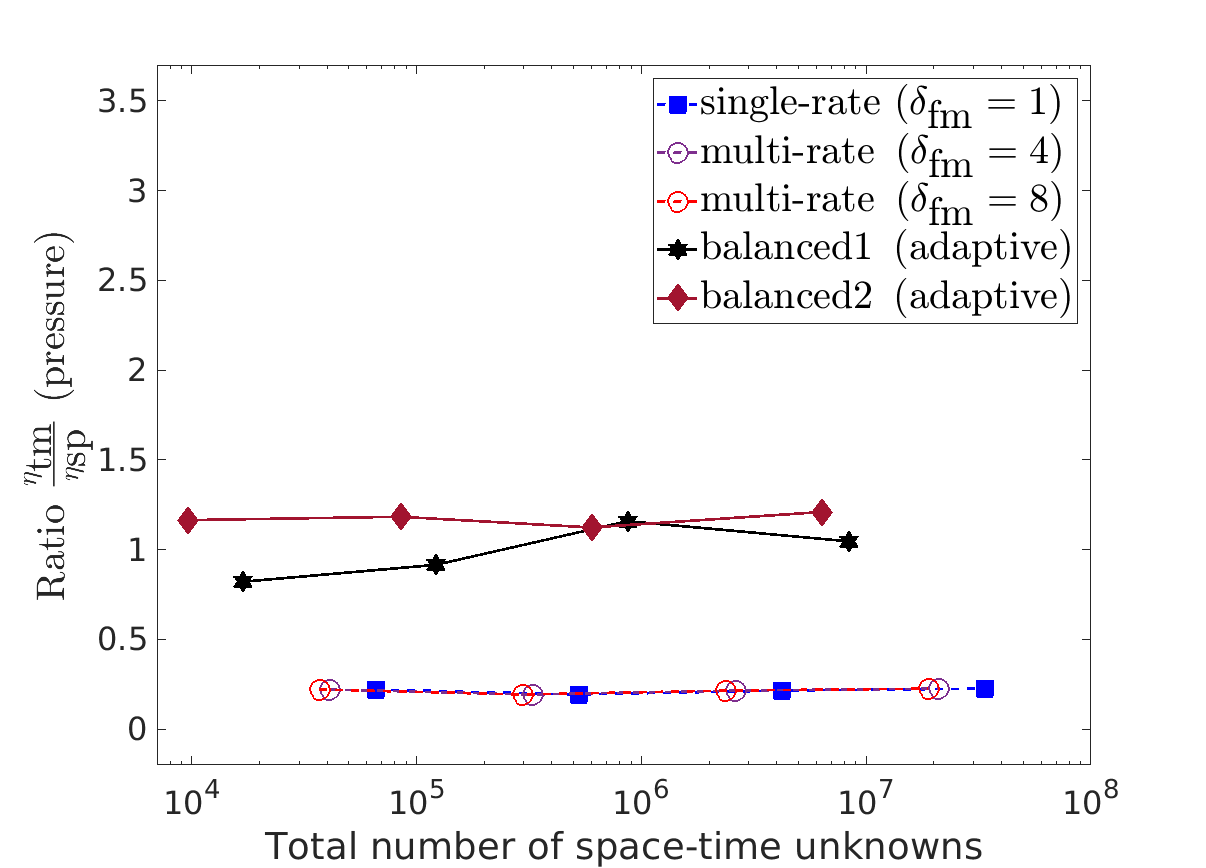}
        \includegraphics[width=0.475\textwidth]{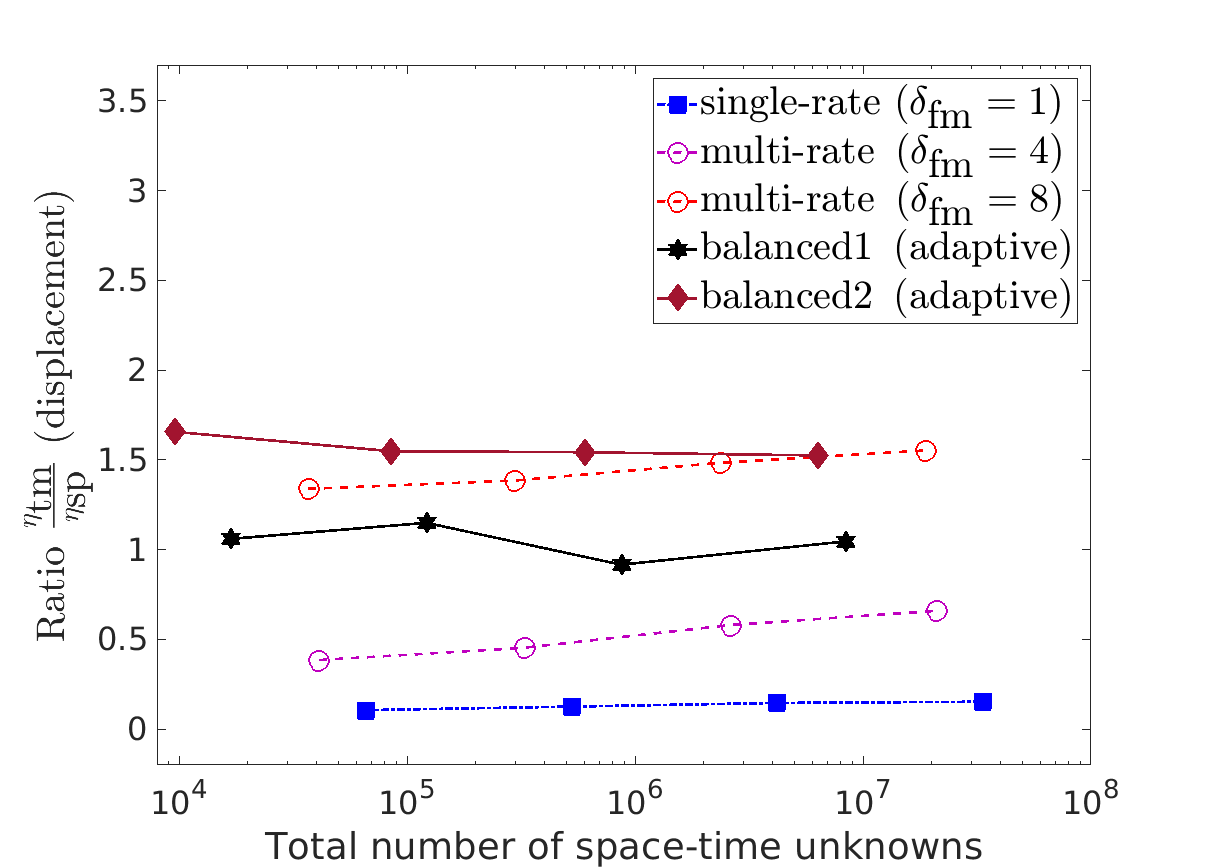}
    \includegraphics[width=0.475\textwidth]{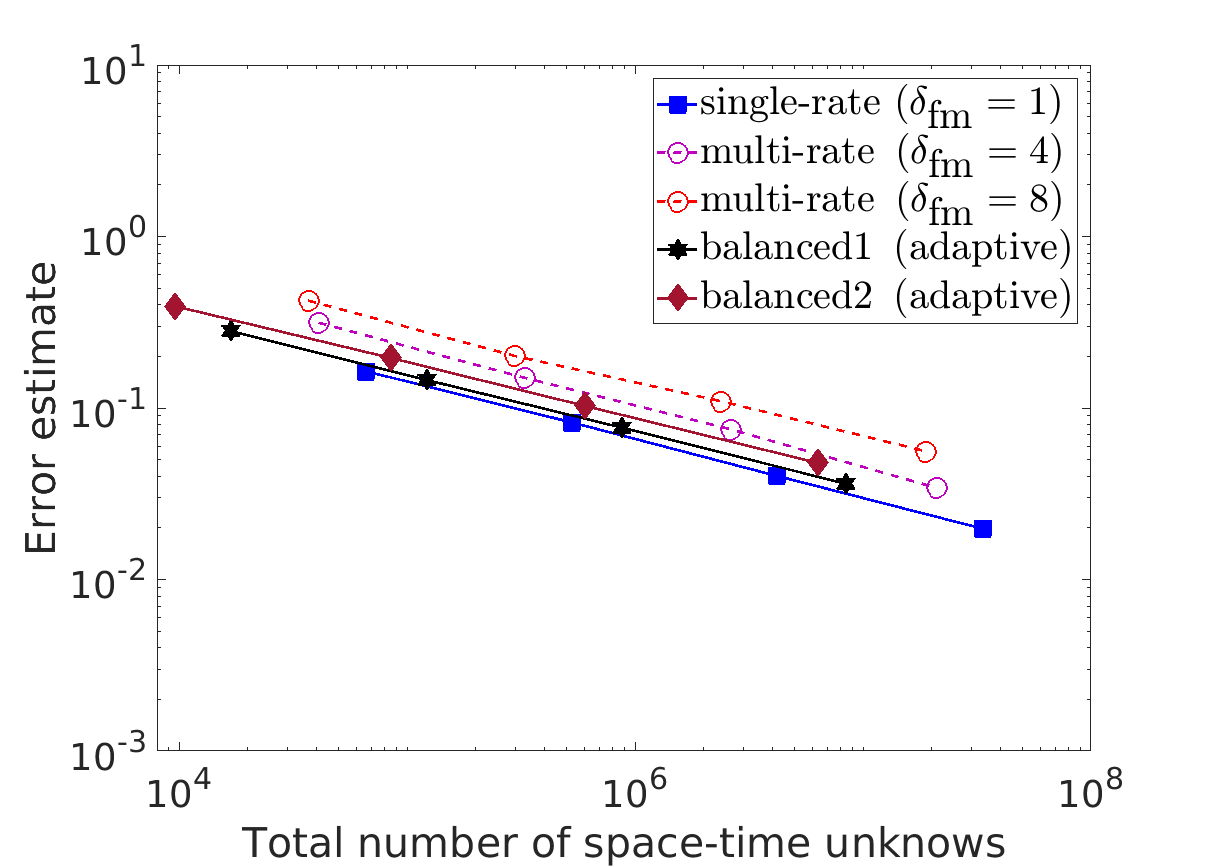}
    \includegraphics[width=0.475\textwidth]{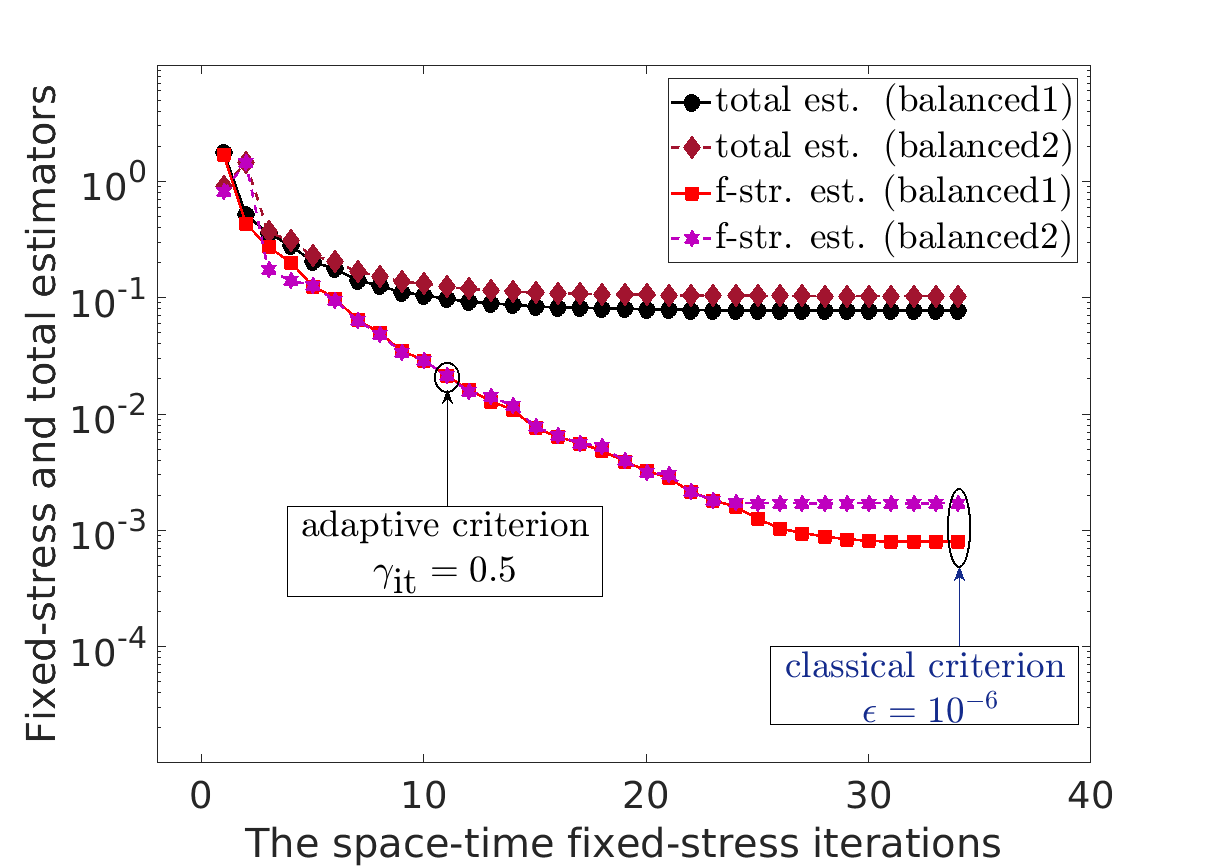}
    \caption{The  ratio  of  the  time  error  over  the  space  error  for the flow (top left) and mechanics (top right) problems as  a  function  of  the  total  number  of  space--time  unknowns. Comparison of the induced overall errors (bottom left). The total and fixed-stress estimators as a function of  fixed-stress iterations at the third refinement level (bottom right).
    }%
    \label{fig:results_case4}
\end{figure}

\begin{figure}[hbt]
    \centering
        \includegraphics[width=0.5\textwidth]{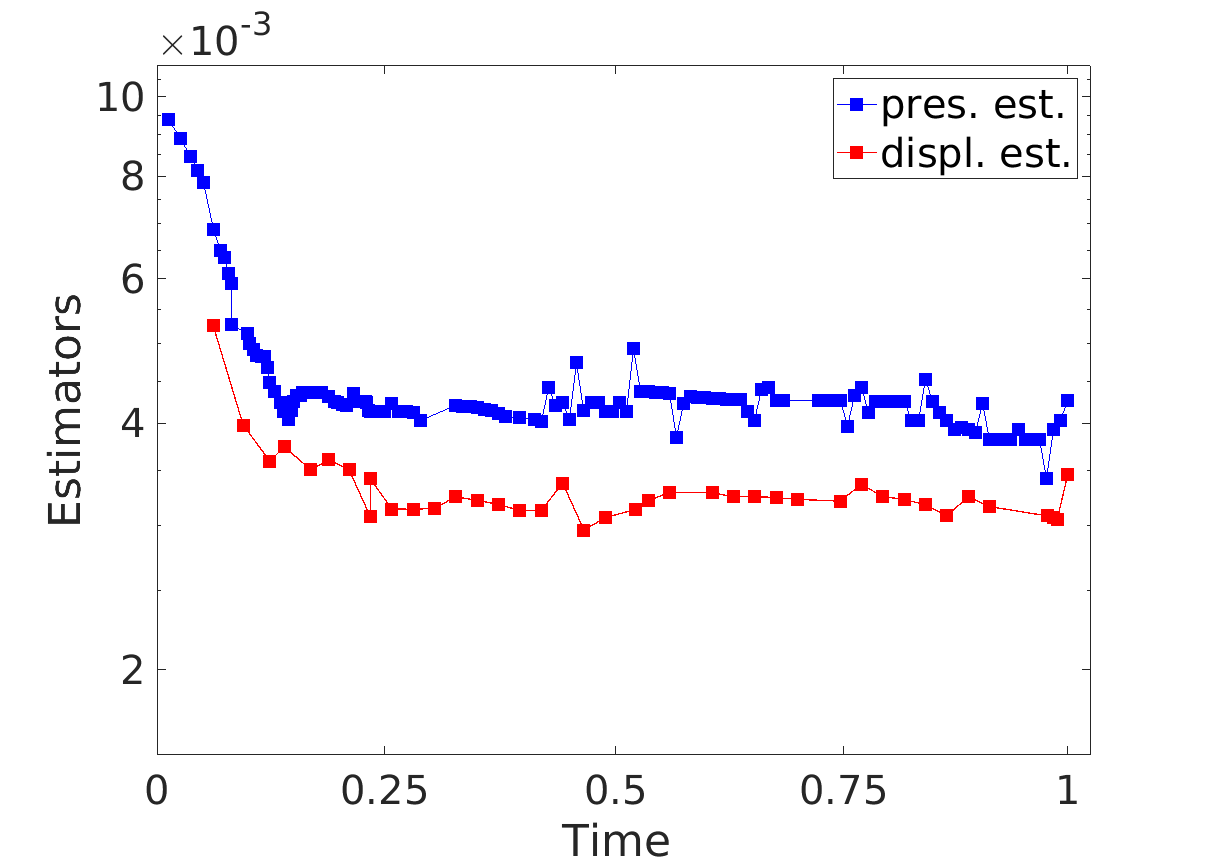}
    \caption{The  pressure and displacement estimators as a function of time.
    }%
    \label{fig:results_caseadapt}
\end{figure}
\subsection{Test problem 2: a poro-mechanical behavior of an osteonal tissue}
In this test case, the poroelastic model is carried out to
study the hydro-mechanical behavior of an idealized osteonal tissue.  This idealized structure is a group of osteons surrounded by their cement lines
and embedded in the interstitial bone matrix~\cite{NGUYEN2010384,Nguyen2011}.   The simplified  domain presents the parts of three different osteons connected by the  interstitial system (IS): a half osteon (O1) is located at the
bottom of the picture and two quarters of osteons (O2) and
(O3) are placed on the top-left and top-right corners, respectively (see Figure~\ref{fig:geo_and_mesh}). 
\begin{figure}[hbt]
    \centering
    \includegraphics[width=0.374\textwidth]{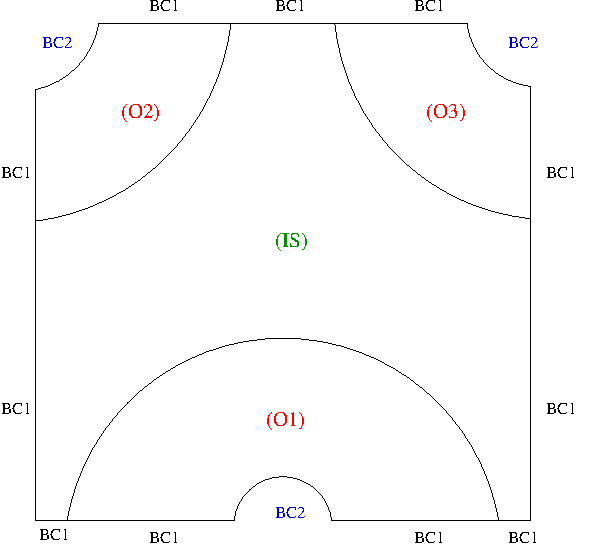}
        \includegraphics[width=0.485\textwidth]{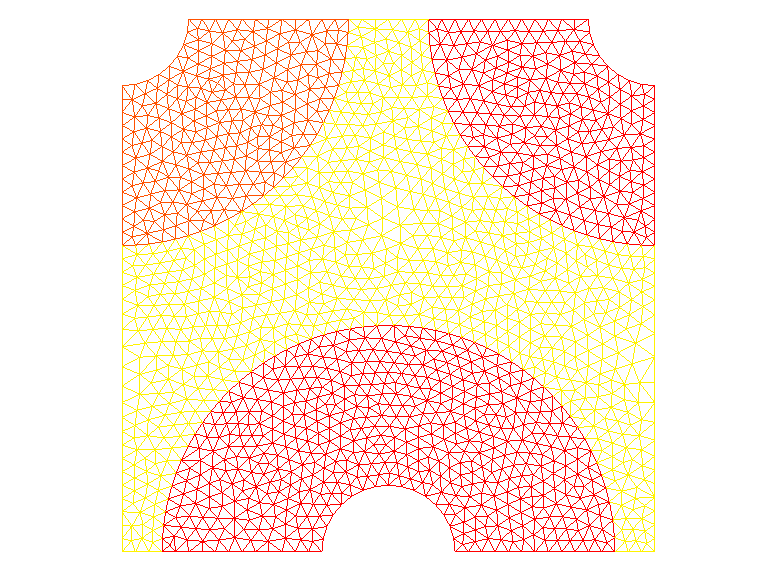}
    \caption{The computational domain (left) and associated mesh (right).
    }%
    \label{fig:geo_and_mesh}
\end{figure}
The used material properties  as generated in~\cite{nguyen2011influence} (see also \cite{doi:10.1080/10255840701479891}) are $\vK= 10^{-6}$ $(\textn{m}^{2})$  in the osteons and $\vK= 10^{-7}$ $(\textn{m}^{2})$ in the (IS)-domain.  The remaining parameters are $c_{0}=0.263$ $(\textn{GPa})$, $\alpha=0.132$ $(\textn{Kg.m}^{2})$,  $\mu=0.328$, and $\lambda=0.25$. The boundary conditions are $p=0$ and  $\vecsigma\vecn=0$ on the portion BC1 and 
$\vecu\cdot\vecn=0$ together with $(\vecsigma\vecn)\cdot\vectau=0$, and $\vecw\cdot\vecn=0$ on BC2. The final time is $T=15~(\mu \textn{s})$.

We use Algorithm~\ref{space_time_discrete_adaptive} equipped with~\eqref{space_time_balance}  where  we consider two computations that differ  by  the balancing parameters $\gamma_{\textn{tm,J}}$ and $\Gamma_{\textn{tm,J}}$.  We start with an initial time step $\tau_{\textn{f}}^{0}=2\cdot 10^{-3}~(\mu \textn{s})$, and $\tau_{\textn{m}}^{0}=4\tau_{\textn{f}}^{0}$. The  estimators are computed every 3 iterations to reduce the computational cost. Table~\ref{Table_1} compares 
the number  of space--time unknowns (number of asynchronous time steps, counting repetitions in the adaptive algorithm, fixed space unknowns) and performed  fixed-stress iterations, and the values of the error estimators  of the three  computations. We observe that the gain in the number of fixed-stress iterations as well as in the number of unknowns is significant. Indeed,  the two adaptive computations  need approximately 30 fixed-stress to converge while the standard fixed-stress algorithm needs  more than 132 iterations, thus, the  total computational cost  is     reduced of $88.6\%$ for  the first adaptive computation and of $82.5\%$ for  the second one. 
 
 To clarify this gain, we can observe in Figure~\ref{fig:fsvstotest} (left) that as soon as we perform $\approx30$ fixed-stress iterations, the fixed-stress estimator is sufficiently small to not contribute significantly on the overall error.  Also as expected, the adaptive stopping criterion stops the fixed-stress algorithm  when the solution is sufficiently accurate.  Figure~\ref{fig:fsvstotest} (left) confirms  also the role of the adaptivity in time, with which, the fixed-stress estimator becomes quickly smaller than the space and time discretization estimators, even with a large value of $\gamma_{\textn{it}}$, for example $\gamma_{\textn{it}}=0.5$.  We can also observe  that even with a small parameter $\gamma_{\textn{it}}=0.01$, the gain in the number of fixed-stress iterations is significant. In Figure~\ref{fig:fsvstotest} (right), we plot the pressure and displacement estimators as a function of time. Clearly, the displacement error dominates the pressure error along the simulation. In Figure~\ref{fig:results_case5}, we plot the approximate solution at the final time $t=T$.  Figure~\ref{fig:results_case6} compares the spatial  discretization errors for the pressure (top left) and displacement (top right), and the fixed-stress estimator (bottom), after using our adaptive stopping criteria at the final time $t=T$. Besides detecting the
dominating error at the circular boundary of the Osteons, we can  see that the total error is dominated by the mechanics discretization error, and that the fixed-stress estimator  is negligible.
 
  \begin{table}[h]
  \centering
\begin{tabular}{|c||c|c|c|c|c|}
\hline
Algorithm
    & \multicolumn{2}{c|}{adaptive} 
        & \multicolumn{2}{c|}{standard}\\ 
        \hline
 User-weights
    & $\gamma_{\textn{tm,J}}=0.9$, $\Gamma_{\textn{tm,J}}=1.1$    &   $\gamma_{\textn{tm,J}}=0.5$, $\Gamma_{\textn{tm,J}}=1.5$  &   \multicolumn{2}{c|}{ none }                 \\
    \hline
     Tolerance
    & $\gamma_{\textn{it}}=0.5$    &   $\gamma_{\textn{it}}=0.5$  &   \multicolumn{2}{c|}{ $\textnormal{err}_{\textn{FS}}^{k}\leq10^{-5}$ }                 \\
    \hline
     Nb. iterations
    &  $\approx 30 $   &  $\approx 30$   &   \multicolumn{2}{c|}{ $\approx 136$ }                 \\
    \hline
    Nb. unknowns
    &  571201    &  864341   &   \multicolumn{2}{c|}{ 1920375 }                 \\
    \hline
      Tot. estimate
    &  0.7943    &  0.723   &   \multicolumn{2}{c|}{ 0.638 }                 \\
    \hline
%
\end{tabular}
\caption{The  three computations in test problem~2.}
\label{Table_1}
\end{table}

\begin{figure}[hbt]
    \centering
        \includegraphics[width=0.475\textwidth]{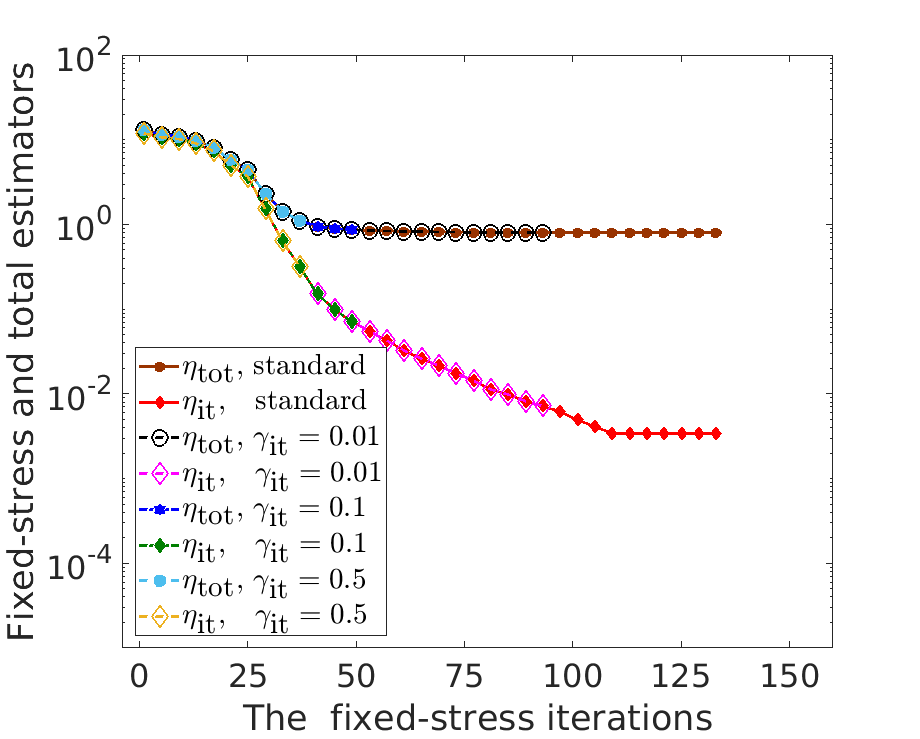}
            \includegraphics[width=0.475\textwidth]{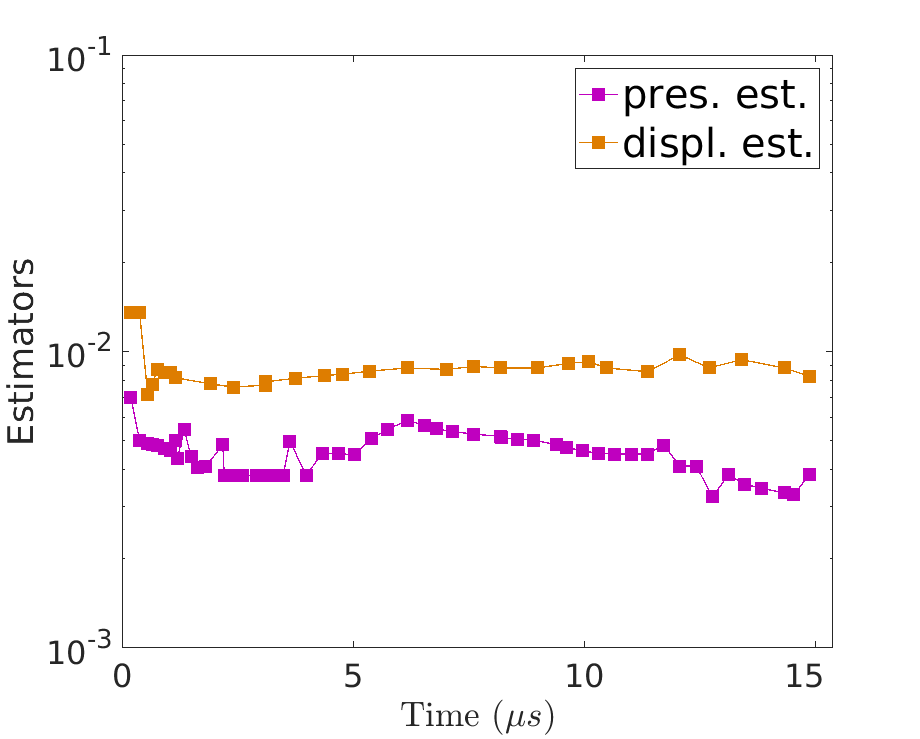}
    \caption{The fixed-stress and total estimators as a function of the fixed-stress iterations for various parameter $\gamma_{\textn{it}}$ (left). The pressure and displacement estimators as a function of time (right).}%
    \label{fig:fsvstotest}
\end{figure}

\begin{figure}[hbt]
    \centering
        \includegraphics[trim=0cm 5cm 0cm 5cm,clip=true,width=0.475\textwidth]{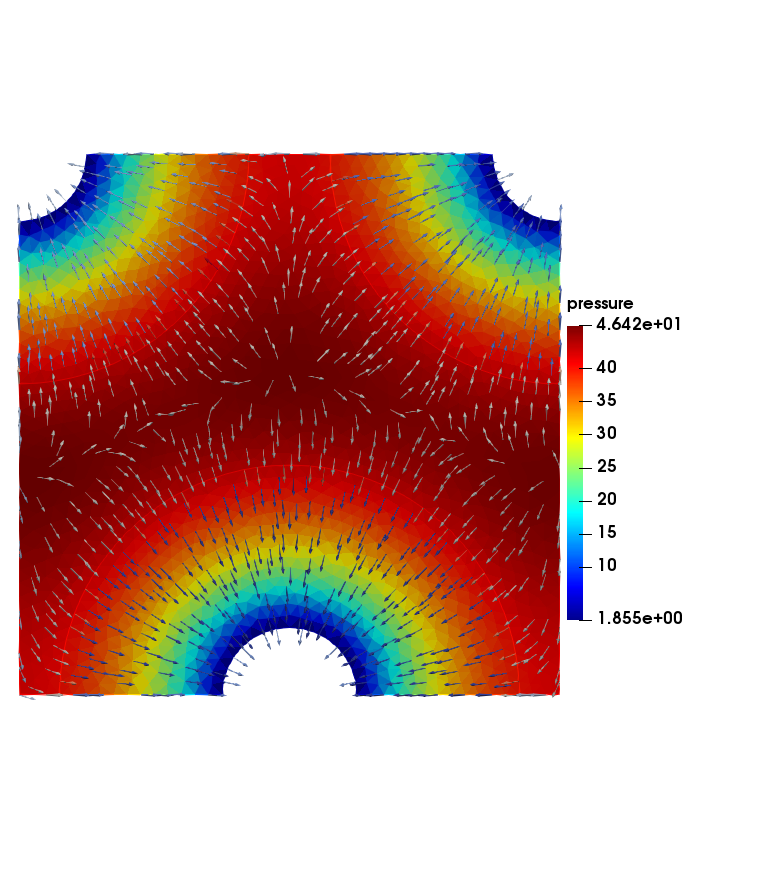}
        \includegraphics[trim=0cm 5cm 0cm 5cm,clip=true,width=0.475\textwidth]{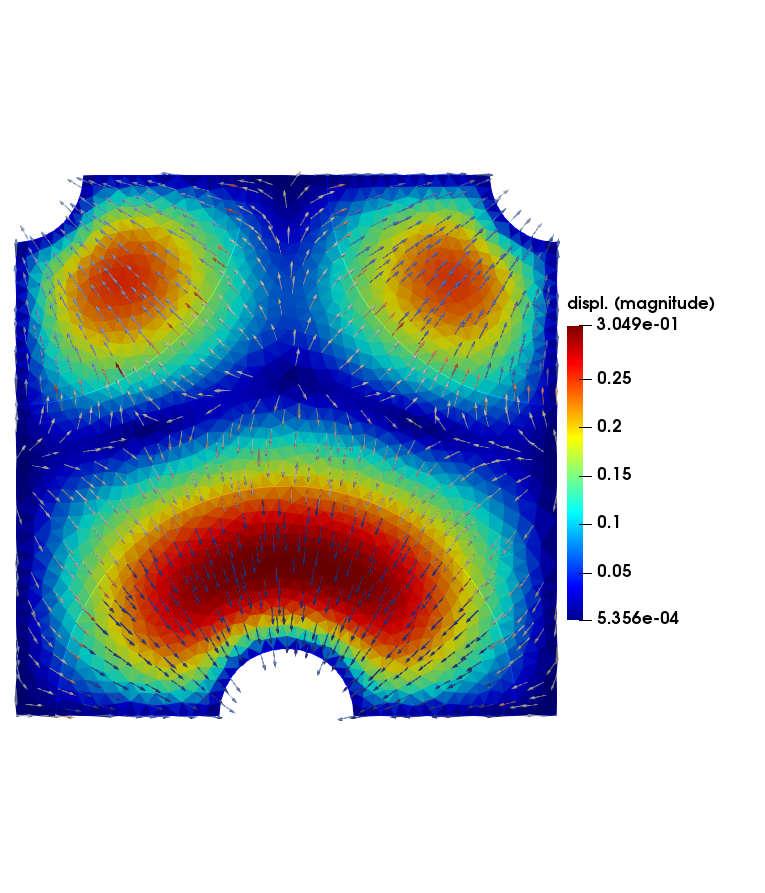}
    \caption{Approximate pressure and velocity (left) and displacement (right) at $t=T$.
    }%
    \label{fig:results_case5}
\end{figure}
\begin{figure}[hbt]
    \centering
      \begin{subfigure}[b]{0.475\textwidth}
    \includegraphics[trim=0cm 2cm 0cm 4cm,clip=true,width=\textwidth]{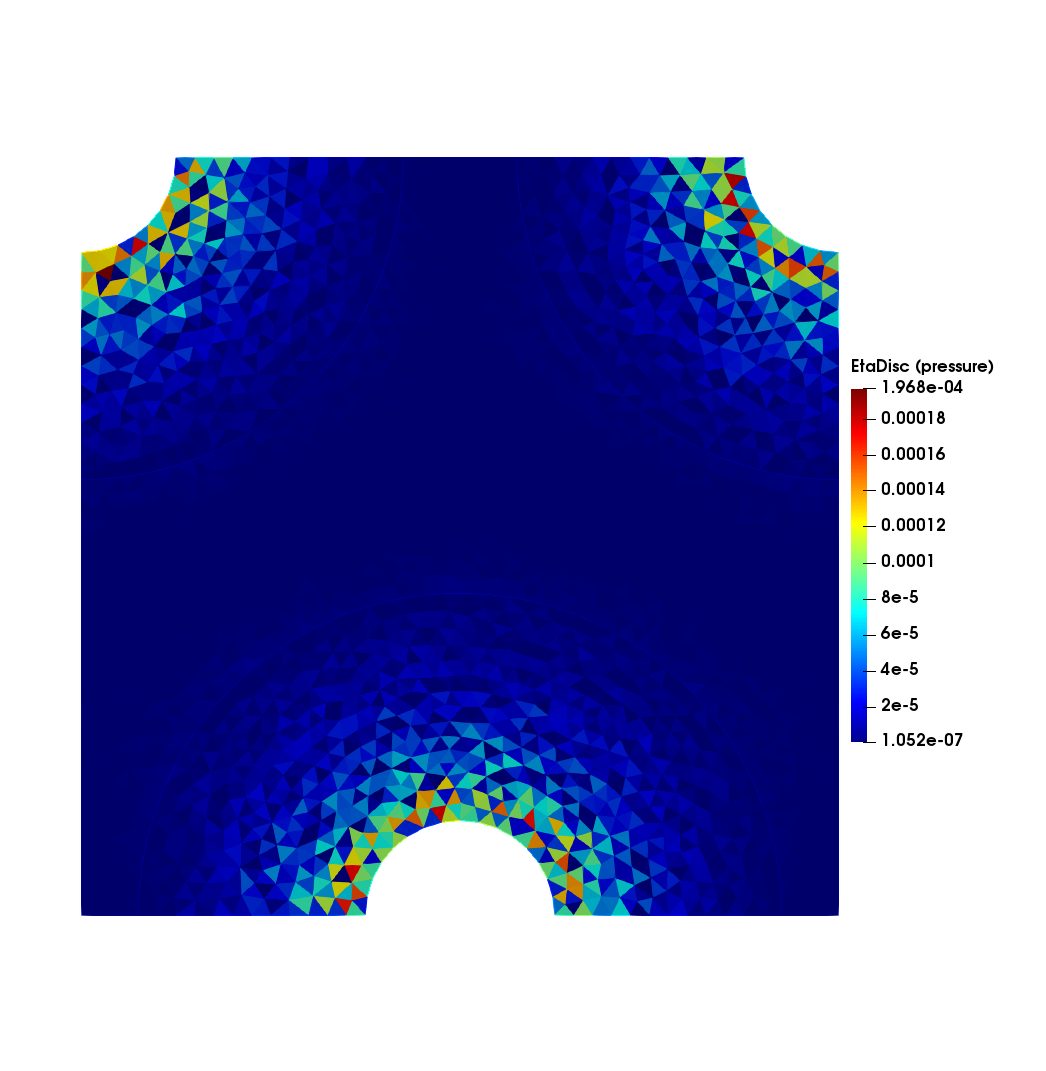}
    \caption{Pressure discretization estimator}
        \label{fig:discretization_error_pressure}
    \end{subfigure}
          \begin{subfigure}[b]{0.475\textwidth}
    \includegraphics[trim=0cm 2cm 0cm 4cm,clip=true,width=\textwidth]{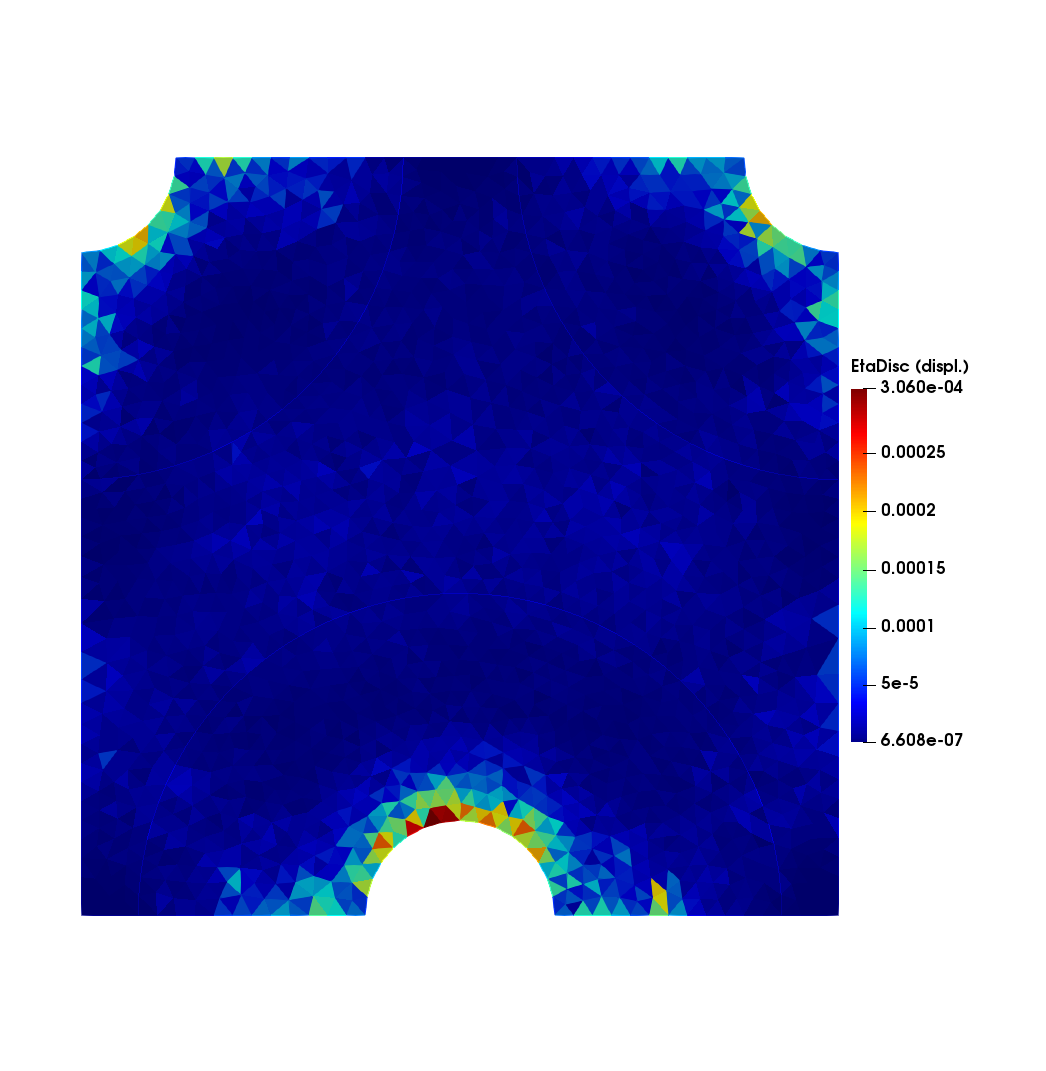}
    \caption{Displacement discretization estimator}
        \label{fig:discretization_error_displ}
    \end{subfigure}
     \begin{subfigure}[b]{0.485\textwidth}
       \includegraphics[trim=0cm 2cm 0cm 4cm,clip=true,width=\textwidth]{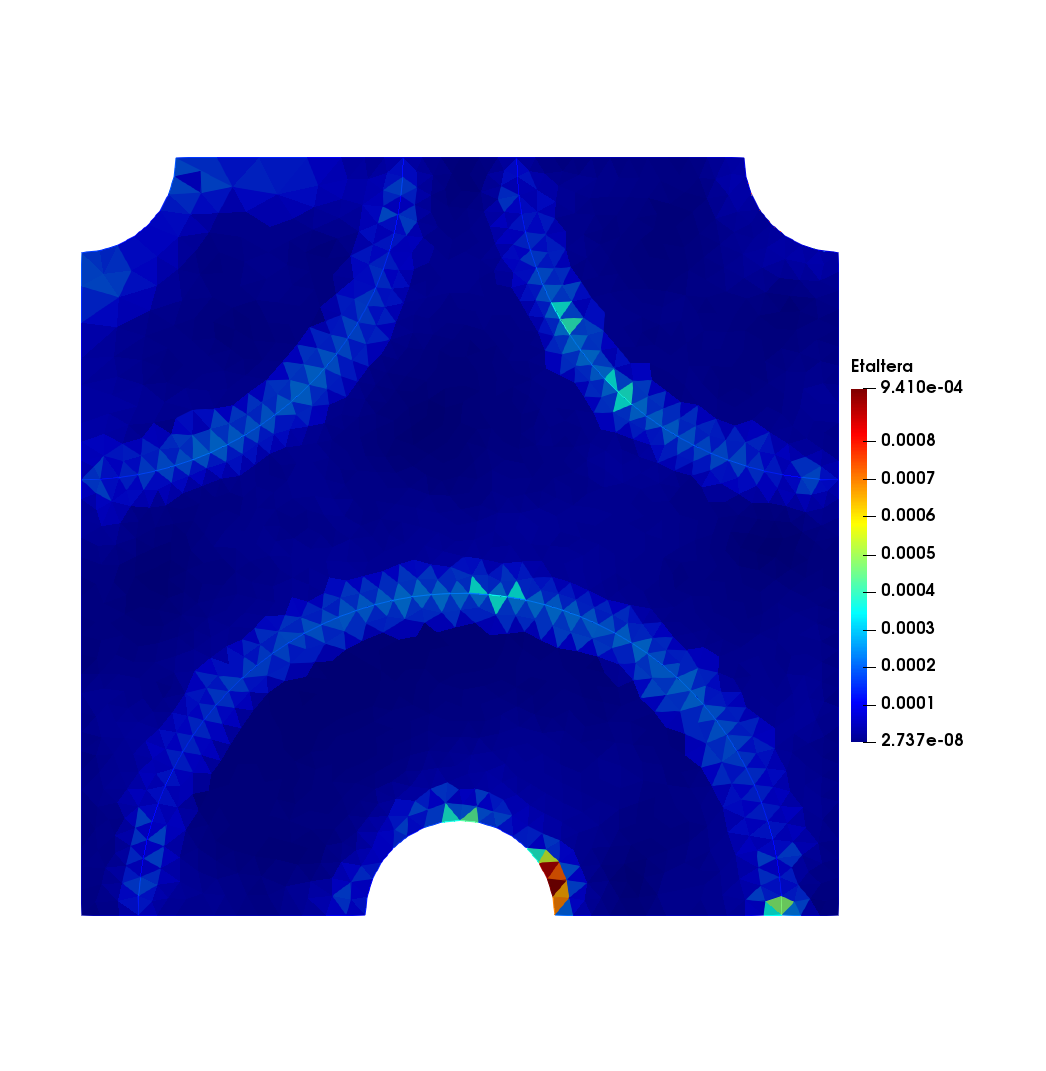}
    \caption{Fixed-stress estimator}
        \label{fig:discretization_error_fixedstress}
    \end{subfigure}
    \caption{Spatial distributions of the pressure and displacement discretization estimators and the  fixed-stress estimator at $t=T$.}%
    \label{fig:results_case6}
\end{figure}
\section{Conclusion}\label{sec:conclusion}
We proposed  in this paper adaptive fixed-stress iterative coupling schemes for the Biot system. Our adaptive algorithm can be used  either globally-in-time  or (partially)  via time windowing techniques,  and works as follows:
\begin{itemize}
 \item At the first iteration, both time step size of flow and mechanics
will be adapted in such a way that the space and time error contributions are equilibrated.
\item We then continue iterating, where several estimators (space, time and fixed-stress) are computed,  until 
the fixed-stress estimator becomes smaller (up to a user-chosen constant) than the other error components.
\end{itemize}
The numerical experiments demonstrated the accuracy of the estimated quantities while highlighting the
applicability of the presented adaptive algorithm. Particularly,  the algorithm  saves important number of iterations,  reduces significantly the total computational cost by adapting asynchronously the flow and mechanics time-steps and  avoiding over-in-time refinement  together with  maintaining a small  non-conformity in-time error. The algorithm may also help optimizing the tuning parameter. These benefits, together with the fact that we, a posteriori,  estimate the overall error that   is guaranteed and without unknown constant, leads to efficient and optimized adaptive fixed-stress coupling algorithm. Note that the present approach can be extended easily to other inexact coupling methods such as drained split, undrained split,  and fixed-strain split methods. Also, the present algorithm can be applied directly, without further developments to any flux- and stress-conforming discretizations of the flow and mechanics such that cell-centered finite volume or mimitic finite difference and can easily be extended to conforming methods.
\section*{Acknowledgment}
The research is supported  by the  Norwegian Research Council Toppforsk project  250223 (The TheMSES project: \href{project}{https://themses.w.uib.no}).
\vspace*{-0.2cm}

\enlargethispage{0.2cm}


\begin{thebibliography}{10}

\bibitem{ahmed:hal-01540956}
{\sc E.~Ahmed, S.~{Ali Hassan}, C.~Japhet, M.~Kern, and M.~Vohral\'{\i}k}, {\em
  A posteriori error estimates and stopping criteria for space-time domain
  decomposition for two-phase flow between different rock types}.
\newblock working paper or preprint, June 2017,
  \url{https://hal.inria.fr/hal-01540956}.

\bibitem{ahmed:hal-01687026}
{\sc E.~Ahmed, F.~A. Radu, and J.~M. Nordbotten}, {\em Adaptive poromechanics
  computations based on a posteriori error estimates for fully mixed
  formulations of biot's consolidation model}, research report, {Department of
  Mathematics, University of Bergen}, Nov. 2018,
  \url{https://hal.inria.fr/hal-01687026}.

\bibitem{AHMED2018}
{\sc E.~Ahmed, F.~A. Radu, and J.~M. Nordbotten}, {\em Adaptive poromechanics
  computations based on a posteriori error estimates for fully mixed
  formulations of biot’s consolidation model}, Computer Methods in Applied
  Mechanics and Engineering,  (2018),
  \href{http://dx.doi.org/https://doi.org/10.1016/j.cma.2018.12.016}{doi:\nolinkurl{https://doi.org/10.1016/j.cma.2018.12.016}},
  \url{http://www.sciencedirect.com/science/article/pii/S0045782518306145}.

\bibitem{hassan2017posteriori}
{\sc S.~{Ali Hassan}, C.~Japhet, and M.~Vohral\'{\i}k}, {\em A posteriori
  stopping criteria for space-time domain decomposition for the heat equation
  in mixed formulations}, Electron. Trans. Numer. Anal., 49 (2018),
  pp.~151--181,
  \href{http://dx.doi.org/10.1553/etna_vol49s151}{doi:\nolinkurl{10.1553/etna_vol49s151}},
  \url{https://doi.org/10.1553/etna_vol49s151}.

\bibitem{almani2016convergence}
{\sc T.~Almani, K.~Kumar, A.~Dogru, G.~Singh, and M.~F. Wheeler}, {\em
  Convergence analysis of multirate fixed-stress split iterative schemes for
  coupling flow with geomechanics}, Comput. Methods Appl. Mech. Engrg., 311
  (2016), pp.~180--207,
  \href{http://dx.doi.org/10.1016/j.cma.2016.07.036}{doi:\nolinkurl{10.1016/j.cma.2016.07.036}},
  \url{https://doi.org/10.1016/j.cma.2016.07.036}.

\bibitem{ambartsumyan2018multipoint1}
{\sc I.~Ambartsumyan, E.~Khattatov, J.~M. Nordbotten, and I.~Yotov}, {\em A
  multipoint stress mixed finite element method for elasticity i: Simplicial
  grids}, arXiv preprint arXiv:1805.09920,  (2018).

\bibitem{ambartsumyan2018multipoint2}
{\sc I.~Ambartsumyan, E.~Khattatov, J.~M. Nordbotten, and I.~Yotov}, {\em A
  multipoint stress mixed finite element method for elasticity ii:
  Quadrilateral grids}, arXiv preprint arXiv:1811.01928,  (2018).

\bibitem{arioli2005stopping}
{\sc M.~Arioli, D.~Loghin, and A.~J. Wathen}, {\em Stopping criteria for
  iterations in finite element methods}, Numer. Math., 99 (2005), pp.~381--410,
  \href{http://dx.doi.org/10.1007/s00211-004-0568-z}{doi:\nolinkurl{10.1007/s00211-004-0568-z}},
  \url{https://doi.org/10.1007/s00211-004-0568-z}.

\bibitem{arnold1985mixed}
{\sc D.~N. Arnold and F.~Brezzi}, {\em Mixed and nonconforming finite element
  methods: implementation, postprocessing and error estimates}, RAIRO
  Mod{\'e}l. Math. Anal. Num{\'e}r., 19 (1985), pp.~7--32,
  \href{http://dx.doi.org/10.1051/m2an/1985190100071}{doi:\nolinkurl{10.1051/m2an/1985190100071}},
  \url{https://doi.org/10.1051/m2an/1985190100071}.

\bibitem{arnold2007mixed}
{\sc D.~N. Arnold, R.~S. Falk, and R.~Winther}, {\em Mixed finite element
  methods for linear elasticity with weakly imposed symmetry}, Math. Comp., 76
  (2007), pp.~1699--1723,
  \href{http://dx.doi.org/10.1090/S0025-5718-07-01998-9}{doi:\nolinkurl{10.1090/S0025-5718-07-01998-9}},
  \url{https://doi.org/10.1090/S0025-5718-07-01998-9}.

\bibitem{BAUSE2017745}
{\sc M.~Bause, F.~A. Radu, and U.~K{\"o}cher}, {\em Space-time finite element
  approximation of the {B}iot poroelasticity system with iterative coupling},
  Comput. Methods Appl. Mech. Engrg., 320 (2017), pp.~745--768,
  \href{http://dx.doi.org/10.1016/j.cma.2017.03.017}{doi:\nolinkurl{10.1016/j.cma.2017.03.017}},
  \url{https://doi.org/10.1016/j.cma.2017.03.017}.

\bibitem{MR3504993}
{\sc D.~Boffi, M.~Botti, and D.~A. {Di Pietro}}, {\em {A nonconforming
  high-order method for the {B}iot problem on general meshes}}, SIAM J. Sci.
  Comput.,  (2016), pp.~A1508--A1537,
  \href{http://dx.doi.org/10.1137/15M1025505}{doi:\nolinkurl{10.1137/15M1025505}}.

\bibitem{borregales2018parallel}
{\sc M.~Borregales, K.~Kumar, F.~A. Radu, C.~Rodrigo, and F.~J. Gaspar}, {\em A
  partially parallel-in-time fixed-stress splitting method for {B}iot{\rq}s
  consolidation model}, Computers \& Mathematics with Applications,  (2018),
  \href{http://dx.doi.org/10.1016/j.camwa.2018.09.005}{doi:\nolinkurl{10.1016/j.camwa.2018.09.005}},
  \url{http://www.sciencedirect.com/science/article/pii/S0898122118305091}.

\bibitem{both2017robust}
{\sc J.~W. Both, M.~Borregales, J.~M. Nordbotten, K.~Kumar, and F.~A. Radu},
  {\em Robust fixed stress splitting for {B}iot's equations in heterogeneous
  media}, Appl. Math. Lett., 68 (2017), pp.~101--108,
  \href{http://dx.doi.org/10.1016/j.aml.2016.12.019}{doi:\nolinkurl{10.1016/j.aml.2016.12.019}},
  \url{https://doi.org/10.1016/j.aml.2016.12.019}.

\bibitem{BRUNThemo2018}
{\sc M.~K. Brun, E.~Ahmed, J.~M. Nordbotten, and F.~A. Radu}, {\em
  {Well-posedness of the fully coupled quasi-static thermo-poroelastic
  equations with nonlinear convective transport}}, Journal of Mathematical
  Analysis and Applications,
  \href{http://dx.doi.org/10.1016/j.jmaa.2018.10.074}{doi:\nolinkurl{10.1016/j.jmaa.2018.10.074}}.

\bibitem{Castelletto2015}
{\sc N.~Castelletto, J.~A. White, and H.~A. Tchelepi}, {\em Accuracy and
  convergence properties of the fixed-stress iterative solution of two-way
  coupled poromechanics}, International Journal for Numerical and Analytical
  Methods in Geomechanics, 39, pp.~1593--1618,
  \href{http://dx.doi.org/10.1002/nag.2400}{doi:\nolinkurl{10.1002/nag.2400}},
  \url{https://onlinelibrary.wiley.com/doi/abs/10.1002/nag.2400},
  \href{http://arxiv.org/abs/https://onlinelibrary.wiley.com/doi/pdf/10.1002/nag.2400}{arXiv:https://onlinelibrary.wiley.com/doi/pdf/10.1002/nag.2400}.

\bibitem{chin2002iterative}
{\sc L.~Chin, L.~Thomas, J.~Sylte, and R.~Pierson}, {\em Iterative coupled
  analysis of geomechanics and fluid flow for rock compaction in reservoir
  simulation}, Oil \& Gas Science and Technology, 57 (2002), pp.~485--497.

\bibitem{DANA20181}
{\sc S.~Dana, B.~Ganis, and M.~F. Wheeler}, {\em A multiscale fixed stress
  split iterative scheme for coupled flow and poromechanics in deep subsurface
  reservoirs}, J. Comput. Phys., 352 (2018), pp.~1--22,
  \href{http://dx.doi.org/10.1016/j.jcp.2017.09.049}{doi:\nolinkurl{10.1016/j.jcp.2017.09.049}},
  \url{https://doi.org/10.1016/j.jcp.2017.09.049}.

\bibitem{DiPieFlaVohSol}
{\sc D.~A. {Di Pietro}, E.~Flauraud, M.~Vohral\'{\i}k, and S.~Yousef}, {\em A
  posteriori error estimates, stopping criteria, and adaptivity for multiphase
  compositional {D}arcy flows in porous media}, J. Comput. Phys., 276 (2014),
  pp.~163--187,
  \href{http://dx.doi.org/10.1016/j.jcp.2014.06.061}{doi:\nolinkurl{10.1016/j.jcp.2014.06.061}},
  \url{https://doi.org/10.1016/j.jcp.2014.06.061}.

\bibitem{di2015adaptive}
{\sc D.~A. {Di Pietro}, M.~Vohral\'{\i}k, and S.~Yousef}, {\em Adaptive
  regularization, linearization, and discretization and a posteriori error
  control for the two-phase {S}tefan problem}, Math. Comp., 84 (2015),
  pp.~153--186,
  \href{http://dx.doi.org/10.1090/S0025-5718-2014-02854-8}{doi:\nolinkurl{10.1090/S0025-5718-2014-02854-8}},
  \url{https://doi.org/10.1090/S0025-5718-2014-02854-8}.

\bibitem{ern2010posteriori}
{\sc A.~Ern and M.~Vohral\'{\i}k}, {\em A posteriori error estimation based on
  potential and flux reconstruction for the heat equation}, SIAM J. Numer.
  Anal., 48 (2010), pp.~198--223,
  \href{http://dx.doi.org/10.1137/090759008}{doi:\nolinkurl{10.1137/090759008}},
  \url{https://doi.org/10.1137/090759008}.

\bibitem{adaptnewernvohralik}
{\sc A.~Ern and M.~Vohral\'{\i}k}, {\em Adaptive inexact {N}ewton methods with
  a posteriori stopping criteria for nonlinear diffusion {PDE}s}, SIAM J. Sci.
  Comput., 35 (2013), pp.~A1761--A1791,
  \href{http://dx.doi.org/10.1137/120896918}{doi:\nolinkurl{10.1137/120896918}},
  \url{https://doi.org/10.1137/120896918}.

\bibitem{GASPAR2017526}
{\sc F.~J. Gaspar and C.~Rodrigo}, {\em On the fixed-stress split scheme as
  smoother in multigrid methods for coupling flow and geomechanics}, Comput.
  Methods Appl. Mech. Engrg., 326 (2017), pp.~526--540,
  \href{http://dx.doi.org/10.1016/j.cma.2017.08.025}{doi:\nolinkurl{10.1016/j.cma.2017.08.025}},
  \url{https://doi.org/10.1016/j.cma.2017.08.025}.

\bibitem{HannukainenSV12}
{\sc A.~Hannukainen, R.~Stenberg, and M.~Vohral\'{\i}k}, {\em A unified
  framework for a posteriori error estimation for the {S}tokes problem}, Numer.
  Math., 122 (2012), pp.~725--769,
  \href{http://dx.doi.org/10.1007/s00211-012-0472-x}{doi:\nolinkurl{10.1007/s00211-012-0472-x}},
  \url{https://doi.org/10.1007/s00211-012-0472-x}.

\bibitem{jiranek2010posteriori}
{\sc P.~Jir{\'a}nek, Z.~Strako{\v s}, and M.~Vohral\'{\i}k}, {\em A posteriori
  error estimates including algebraic error and stopping criteria for iterative
  solvers}, SIAM J. Sci. Comput., 32 (2010), pp.~1567--1590,
  \href{http://dx.doi.org/10.1137/08073706X}{doi:\nolinkurl{10.1137/08073706X}},
  \url{https://doi.org/10.1137/08073706X}.

\bibitem{MR3719125}
{\sc E.~Keilegavlen and J.~M. Nordbotten}, {\em {Finite volume methods for
  elasticity with weak symmetry}}, Internat. J. Numer. Methods Engrg., 112
  (2017), pp.~939--962,
  \href{http://dx.doi.org/10.1002/nme.5538}{doi:\nolinkurl{10.1002/nme.5538}},
  \url{https://doi.org/10.1002/nme.5538}.

\bibitem{kim2009stability}
{\sc J.~Kim et~al.}, {\em Stability, accuracy and efficiency of sequential
  methods for coupled flow and geomechanics}, in SPE reservoir simulation
  symposium, Society of Petroleum Engineers, 2009.

\bibitem{KIM20112094}
{\sc J.~Kim, H.~A. Tchelepi, and R.~Juanes}, {\em Stability and convergence of
  sequential methods for coupled flow and geomechanics: drained and undrained
  splits}, Comput. Methods Appl. Mech. Engrg., 200 (2011), pp.~2094--2116,
  \href{http://dx.doi.org/10.1016/j.cma.2011.02.011}{doi:\nolinkurl{10.1016/j.cma.2011.02.011}},
  \url{https://doi.org/10.1016/j.cma.2011.02.011}.

\bibitem{kumar2018guaranteed}
{\sc K.~Kumar, S.~Matculevich, J.~Nordbotten, and S.~Repin}, {\em Guaranteed
  and computable bounds of approximation errors for the semi-discrete biot
  problem}, arXiv preprint arXiv:1808.08036,  (2018).

\bibitem{lee2016robust}
{\sc J.~J. Lee}, {\em Robust error analysis of coupled mixed methods for
  {B}iot's consolidation model}, J. Sci. Comput., 69 (2016), pp.~610--632,
  \href{http://dx.doi.org/10.1007/s10915-016-0210-0}{doi:\nolinkurl{10.1007/s10915-016-0210-0}},
  \url{https://doi.org/10.1007/s10915-016-0210-0}.

\bibitem{mainguy2002coupling}
{\sc M.~Mainguy and P.~Longuemare}, {\em Coupling fluid flow and rock
  mechanics: formulations of the partial coupling between reservoir and
  geomechanical simulators}, Oil \& Gas Science and Technology, 57 (2002),
  pp.~355--367.

\bibitem{Mikelic2014}
{\sc A.~Mikeli{\'c}, B.~Wang, and M.~F. Wheeler}, {\em Numerical convergence
  study of iterative coupling for coupled flow and geomechanics}, Comput.
  Geosci., 18 (2014), pp.~325--341,
  \href{http://dx.doi.org/10.1007/s10596-013-9393-8}{doi:\nolinkurl{10.1007/s10596-013-9393-8}},
  \url{https://doi.org/10.1007/s10596-013-9393-8}.

\bibitem{Mikelic2013}
{\sc A.~Mikeli{\'c} and M.~F. Wheeler}, {\em Convergence of iterative coupling
  for coupled flow and geomechanics}, Comput. Geosci., 17 (2013), pp.~455--461,
  \href{http://dx.doi.org/10.1007/s10596-012-9318-y}{doi:\nolinkurl{10.1007/s10596-012-9318-y}},
  \url{https://doi.org/10.1007/s10596-012-9318-y}.

\bibitem{NGUYEN2010384}
{\sc V.-H. Nguyen, T.~Lemaire, and S.~Naili}, {\em Poroelastic behaviour of
  cortical bone under harmonic axial loading: A finite element study at the
  osteonal scale}, Medical Engineering and Physics, 32 (2010), pp.~384 -- 390,
  \href{http://dx.doi.org/https://doi.org/10.1016/j.medengphy.2010.02.001}{doi:\nolinkurl{https://doi.org/10.1016/j.medengphy.2010.02.001}},
  \url{http://www.sciencedirect.com/science/article/pii/S1350453310000196}.

\bibitem{Nguyen2011}
{\sc V.-H. Nguyen, T.~Lemaire, and S.~Naili}, {\em Influence of interstitial
  bone microcracks on strain-induced fluid flow}, Biomechanics and Modeling in
  Mechanobiology, 10 (2011), pp.~963--972,
  \href{http://dx.doi.org/10.1007/s10237-011-0287-1}{doi:\nolinkurl{10.1007/s10237-011-0287-1}},
  \url{https://doi.org/10.1007/s10237-011-0287-1}.

\bibitem{nguyen2011influence}
{\sc V.-H. Nguyen, T.~Lemaire, and S.~Naili}, {\em Influence of microcracks on
  interstitial fluid flows at the osteonal scale}, 20{\`e}me Congr{\`e}s
  Fran{\c{c}}ais de M{\'e}canique, 28 ao{\^u}t/2 sept. 2011-25044
  Besan{\c{c}}on, France (FR),  (2011).

\bibitem{MR3478962}
{\sc J.~M. Nordbotten}, {\em Stable cell-centered finite volume discretization
  for {B}iot equations}, SIAM J. Numer. Anal., 54 (2016), pp.~942--968,
  \href{http://dx.doi.org/10.1137/15M1014280}{doi:\nolinkurl{10.1137/15M1014280}},
  \url{https://doi.org/10.1137/15M1014280}.

\bibitem{pencheva2013robust}
{\sc G.~V. Pencheva, M.~Vohral\'{\i}k, M.~F. Wheeler, and T.~Wildey}, {\em
  Robust a posteriori error control and adaptivity for multiscale,
  multinumerics, and mortar coupling}, SIAM J. Numer. Anal., 51 (2013),
  pp.~526--554,
  \href{http://dx.doi.org/10.1137/110839047}{doi:\nolinkurl{10.1137/110839047}},
  \url{https://doi.org/10.1137/110839047}.

\bibitem{doi:10.1080/10255840701479891}
{\sc M.~Predoi-Racila and J.~M. Crolet}, {\em Sinupros: human cortical bone
  multiscale model with a fluide–structure interaction}, Computer Methods in
  Biomechanics and Biomedical Engineering, 10 (2007), pp.~179--180,
  \href{http://dx.doi.org/10.1080/10255840701479891}{doi:\nolinkurl{10.1080/10255840701479891}},
  \url{https://doi.org/10.1080/10255840701479891},
  \href{http://arxiv.org/abs/https://doi.org/10.1080/10255840701479891}{arXiv:https://doi.org/10.1080/10255840701479891}.

\bibitem{RIEDLBECK20171593}
{\sc R.~Riedlbeck, D.~A. {Di Pietro}, A.~Ern, S.~Granet, and K.~Kazymyrenko},
  {\em Stress and flux reconstruction in {B}iot's poro-elasticity problem with
  application to a posteriori error analysis}, Comput. Math. Appl., 73 (2017),
  pp.~1593--1610,
  \href{http://dx.doi.org/10.1016/j.camwa.2017.02.005}{doi:\nolinkurl{10.1016/j.camwa.2017.02.005}},
  \url{https://doi.org/10.1016/j.camwa.2017.02.005}.

\bibitem{MR3425298}
{\sc C.~Rodrigo, F.~J. Gaspar, X.~Hu, and L.~Zikatanov}, {\em A finite element
  framework for some mimetic finite difference discretizations}, Comput. Math.
  Appl., 70 (2015), pp.~2661--2673,
  \href{http://dx.doi.org/10.1016/j.camwa.2015.07.012}{doi:\nolinkurl{10.1016/j.camwa.2015.07.012}},
  \url{https://doi.org/10.1016/j.camwa.2015.07.012}.

\bibitem{SHOWALTER2000310}
{\sc R.~E. Showalter}, {\em Diffusion in poro-elastic media}, J. Math. Anal.
  Appl., 251 (2000), pp.~310--340,
  \href{http://dx.doi.org/10.1006/jmaa.2000.7048}{doi:\nolinkurl{10.1006/jmaa.2000.7048}},
  \url{https://doi.org/10.1006/jmaa.2000.7048}.

\bibitem{storvik2018optimization}
{\sc E.~Storvik, J.~W. Both, K.~Kumar, J.~M. Nordbotten, and F.~A. Radu}, {\em
  On the optimization of the fixed-stress splitting for biot's equations},
  arXiv preprint arXiv:1811.06242,  (2018).

\bibitem{Vohralik_Wheeler2013}
{\sc M.~Vohral\'{\i}k and M.~F. Wheeler}, {\em A posteriori error estimates,
  stopping criteria, and adaptivity for two-phase flows}, Comput. Geosci., 17
  (2013), pp.~789--812,
  \href{http://dx.doi.org/10.1007/s10596-013-9356-0}{doi:\nolinkurl{10.1007/s10596-013-9356-0}},
  \url{https://doi.org/10.1007/s10596-013-9356-0}.

\bibitem{yi2014convergence}
{\sc S.-Y. Yi}, {\em Convergence analysis of a new mixed finite element method
  for {B}iot's consolidation model}, Numer. Methods Partial Differential
  Equations, 30 (2014), pp.~1189--1210,
  \href{http://dx.doi.org/10.1002/num.21865}{doi:\nolinkurl{10.1002/num.21865}},
  \url{https://doi.org/10.1002/num.21865}.

\bibitem{NAG:NAG2538}
{\sc S.-Y. Yi and M.~L. Bean}, {\em Iteratively coupled solution strategies for
  a four-field mixed finite element method for poroelasticity}, International
  Journal for Numerical and Analytical Methods in Geomechanics, 41 (2017),
  pp.~159--179.

\end{thebibliography}
\end{document}